\newtheorem{Thm}{Theorem}[section] 
\newtheorem{Lem}[Thm]{Lemma} 
\newtheorem{Cor}[Thm]{Corollary} 
\theoremstyle{definition}
\newtheorem{Rem}[Thm]{Remark} 
\theoremstyle{definition}
\newtheorem{Def}[Thm]{Definition}
\numberwithin{equation}{section}
\newcommand{\Con}{\mathrm{Con}}
\newcommand{\Clg}{\mathrm{Clg}}
\newcommand{\Cid}{\mathrm{Cig}}
\newcommand{\TD}{\mathrm{TD}}
\newcommand{\tD}{\mathrm{tD}}
\newcommand{\DEGS}{\mathrm{DEGS}}
\newcommand{\MInd}{\mathrm{MInd}}
\newcommand{\MON}{\mathrm{MON}}
\newcommand{\Var}{\mathrm{Var}}
\newcommand{\id}{\mathrm{Id}}
\newcommand{\vv}[1]{\mathcal {#1}}
\newcommand{\Pol}{\mathrm{Pol}}
\newcommand{\Clo}{\mathrm{Clo}}
\newcommand{\A}{{\mathbf A}}
\newcommand{\B}{{\mathbf B}}
\newcommand{\FF}{{\mathbb F}}
\newcommand{\edge}[1]{\ar@{-}[#1]}
\newcommand{\lattf}[2]{
	\begin{xy}
		(3.5,0):(0,1.2)::
		(1,0)*+[o][F-]{ }="0";
		(0,3)*+[o][F-]{ }="A";
		(3,1)*+[o][F-]{ }="B";
		(2,4)*+[o][F-]{ }="1";
		"0";"A"**\dir{-} ?*_!/7pt/{#1};
		"0";"B"**\dir{-} ?*^!/7pt/{#2};
		"A";"1"**\dir{-} ?*_!/7pt/{#2};
		"B";"1"**\dir{-} ?*^!/7pt/{#1};
	\end{xy}
}
\newcommand{\lattl}[2]{
	\begin{xy}
		(3.5,0):(0,1.2)::
		(1,0)*+[o][F-]{ }="0";
		(0,3)*+[o][F-]{ }="A";
		(2,4)*+[o][F-]{ }="1";
		"0";"A"**\dir{-} ?*_!/7pt/{#1};
		"A";"1"**\dir{-} ?*_!/7pt/{#2};
	\end{xy}
}
\newcommand{\lattr}[2]{
	\begin{xy}
		(3.5,0):(0,1.0)::
		(1,0)*+[o][F-]{ }="0";
		(3,1)*+[o][F-]{ }="B";
		(2,4)*+[o][F-]{ }="1";
		"0";"B"**\dir{-} ?*^!/7pt/{#2};
		"B";"1"**\dir{-} ?*^!/7pt/{#1};
	\end{xy}
}
\newcommand{\latts}{
	\begin{xy}
		(3,0):(0,1.0)::
		(0,0)*+[o][F-]{ }="0";
		(0,4)*+[o][F-]{ }="1";
		"0";"1"**\dir{-} ?*^!/7pt/{3};
	\end{xy}
}
\newcommand{\ZZ}{{\mathbb{Z}}}
\newcommand{\NN}{{\mathbb{N}}}
\DeclareMathAlphabet\mathbfsl {T1}{cmr}{bx}{it}
\title[Clones on $\ZZ_{pq}$]{Clones containing the Mal'cev operation of $\ZZ_{pq}$}
\author{Stefano Fioravanti}
\address{Stefano Fioravanti,
	Institut f\"ur Algebra,
	Johannes Kepler Universit\"at Linz,
	4040 Linz,
	Austria}
\email{\tt stefano.fioravanti66@gmail.com}
\subjclass[2018]{08A40}
\urladdr{http://www.jku.at/algebra}
\thanks{Supported by the Austrian Science Fund (FWF):P29931.}
\keywords{Clonoids, Clones}
\date{\today}
\begin{document}
	
	\begin{abstract}

		We investigate finitary functions from $\mathbb{Z}_{pq}$ to $\mathbb{Z}_{pq}$ for two distinct prime numbers $p$ and $q$. We show that the lattice of all clones on the set $\mathbb{Z}_{pq}$ which contain the addition of $\mathbb{Z}_{pq}$ is finite. We provide an upper bound for the cardinality of this lattice through an injective function to the direct product of the lattice of all $(\ZZ_p,\ZZ_q)$-linearly closed clonoids to the $p+1$ power and the lattice of all $(\ZZ_q,\ZZ_p)$-linearly closed clonoids to the $q+1$ power. These lattices are studied in \cite{Fio.CSOF} and there we can find the exact cardinality of them. Furthermore, we prove that these clones can be generated by a set of functions of arity at most $max(\{p,q\})$.

	\end{abstract}

	\maketitle

\section{Introduction}

The investigation of the lattice of all clones on a set $A$ has been a fecund field of research in general algebra with results such as Emil Post's characterization of the lattice of all clones on a two-element set \cite{Pos.TTVI}. This branch was developed further, e. g., in \cite{Ros.MCOA,PK.FUR,Sze.CIUA} and starting from \cite{BJK.TCOC}, clones are used to study the complexity of certain constraint satisfaction problems (CSPs).

The aim of this paper is to describe the lattice of those clones on the set $\ZZ_{pq}$ that contain the operation of addition of $\ZZ_{pq}$,  with $p$ and $q$ distinct primes. Thus we want to study the part of the lattice of all clones on $\ZZ_{pq}$ which is above the clone of all linear mappings. 

In \cite{Idz.CCMO} P. Idziak characterized the number of polynomial Mal'cev clones (clones containing the constants and a Mal'cev term) on a finite set $A$, which is finite if and only if $|A|\leq 3$. In \cite{Bul.PCCT} A. Bulatov shows a full characterization of all infinitely many polynomial clones on the sets $\ZZ_p \times \ZZ_p$ and $\ZZ_{p^2}$ that contain $+$, where $p$ is a prime. Moreover, a description of polynomial clones on $\ZZ_{pq}$ containing the sum for distinct primes $p$ and $q$ is given in \cite{AM.PCOG} and polynomial clones containing $+$ on $\ZZ_n$, for $n$ squarefree, are described in \cite{May.PCOS}.

In \cite{Kre.CFSO} S. Kreinecker proved that there are infinitely many non-finitely generated clones above $\Clo(\langle\mathbb{Z}_p \times \mathbb{Z}_p , +\rangle)$ for a prime $p > 2$. 

In this paper we will make often use of the concept of $(\FF_p,\FF_q)$-linearly closed clonoid as defined in \cite[Definition $1.1$]{Fio.CSOF}. We recall this definition.

\begin{Def}
	\theoremstyle{definition}
	\label{DefClo}
	Let $p$ and $q$ be powers of different primes, and let $\mathbb{F}_p$ and $\mathbb{F}_q$ be two fields of orders $p$ and $q$. An \emph{$(\mathbb{F}_p,\mathbb{F}_q)$-linearly closed clonoid} is a non-empty subset $C$ of $\bigcup_{n \in \mathbb{N}} \mathbb{F}^{\mathbb{F}^n_q}_p$ with the following properties:
	
	\begin{enumerate}
		\item[(1)] for all $n \in \NN$, $f,g \in C^{[n]}$ and $a, b \in \mathbb{F}_p$:
		
		\begin{center}
			$af + bg \in C^{[n]}$;
		\end{center}
		
		\item[(2)] for all $m,n \in \NN$, $f \in C^{[m]}$ and $A \in \mathbb{F}^{m \times n}_q$:
		
		\begin{center}
			$g: (x_1,\dots,x_n) \mapsto f(A\cdot (x_1,\dots,x_n)^t)$ is in $C^{[n]}$.
		\end{center}
		
	\end{enumerate}
\end{Def}

In \cite[Thoerems $1.2$ and $1.3$]{Fio.CSOF} we can find a complete description of the lattice of all $(\FF_p,\FF_q)$-linearly closed clonoids with $p$ and $q$ powers of distinct primes.

In Section \ref{EmbClo} we show an embedding of the lattice of all $(\mathbb{Z}_p,\ZZ_q)$-linearly closed clonoids into the lattice of all clones above $\Clo(\langle\ZZ_{pq},+\rangle)$, where $p$ and $q$ are distinct primes.

\begin{Thm}
	\label{ThEmbClonoids}
	Let $p$ and $q$ be two distinct prime numbers. Then the lattice of all $(\mathbb{Z}_p,\ZZ_q)$-linearly closed clonoids is embedded in the lattice of all clones above $\Clo(\langle\ZZ_{pq},+\rangle)$.
\end{Thm}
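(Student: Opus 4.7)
The plan is to define an explicit order-preserving map $\Phi$ from the lattice of $(\mathbb{Z}_p,\mathbb{Z}_q)$-linearly closed clonoids to the lattice of clones above $\Clo(\langle\mathbb{Z}_{pq},+\rangle)$, give a normal form for members of $\Phi(C)$ that simultaneously furnishes a retraction $\Psi\colon\Phi(C)\to C$, and then check meet- and join-preservation.

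Let $e\in\mathbb{Z}_{pq}$ be the unique Chinese-remainder idempotent with $e\equiv 1\pmod p$ and $e\equiv 0\pmod q$. For $f\in C^{[n]}$ set
$$\tilde f(x_1,\dots,x_n)\;=\;e\cdot f(x_1\bmod q,\dots,x_n\bmod q)\in\mathbb{Z}_{pq},$$
reading the $\mathbb{Z}_p$-valued output of $f$ as an integer. Then $\tilde f$ depends only on the mod-$q$ reductions of its inputs and its image lies in $e\mathbb{Z}_{pq}$. Define $\Phi(C):=\Clo(\{+\}\cup\{\tilde f:f\in C\})$; clearly $\Phi$ is order-preserving.

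The key structural claim is that every $h\in\Phi(C)^{[n]}$ can be written uniquely as $h(\vec x)=L(\vec x)+\tilde F(\vec x)$ with $L$ a $\mathbb{Z}_{pq}$-linear map and $F\in C^{[n]}$. Existence is by induction on the generation of $\Phi(C)$: projections and generators $\tilde f$ obviously have this shape; sums of two functions of this shape again have it (using clonoid-closure~(1)); and substituting functions $L_i+\tilde F_i$ into a generator $\tilde g$ preserves the shape because $\tilde g$ only reads the $\mathbb{Z}_q$-reductions of its arguments, which reduce to $\mathbb{Z}_q$-linear combinations of the input variables (the $\tilde F_i$ contribute $0$ modulo $q$), producing a term of the form $\widetilde{g\circ A}$ with $A\in\mathbb{Z}_q^{m\times n}$ and $g\circ A\in C$ by clonoid-closure~(2). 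Mixed substitutions are handled analogously, with purely linear residues absorbed into $L$. Uniqueness follows by projecting onto $\mathbb{Z}_p$ and $\mathbb{Z}_q$: since $\tilde F$ has zero $\mathbb{Z}_q$-projection, the $\mathbb{Z}_q$-coefficients of $L$ are forced by $h$; evaluating $h$ on tuples with zero $\mathbb{Z}_q$-projection then pins down the $\mathbb{Z}_p$-coefficients of $L$, leaving $F$ uniquely determined.

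Uniqueness yields a retraction $\Psi(h):=F$, so $\Psi\circ\Phi=\mathrm{id}$ and $\Phi$ is injective. Meets are preserved because $h\in\Phi(C_1)\cap\Phi(C_2)$ iff its canonical $F$ lies in both $C_1$ and $C_2$, giving $\Phi(C_1)\cap\Phi(C_2)=\Phi(C_1\cap C_2)$. For joins, both $\Phi(C_1)\vee\Phi(C_2)$ and $\Phi(C_1\vee C_2)$ coincide with the clone generated by $\{+\}\cup\{\tilde f:f\in C_1\cup C_2\}$, since by the structural claim the clone operations on these generators realise exactly the clonoid-closure operations~(1) and~(2) on $C_1\cup C_2$. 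The main obstacle I anticipate is the inductive substitution step, where one must carefully track how $\mathbb{Z}_{pq}$-linearity interacts with reductions modulo $p$ and modulo $q$ in order to invoke closures~(1) and~(2) at exactly the right moments.
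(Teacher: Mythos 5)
Your proof is correct and takes essentially the same approach as the paper: your $\tilde f = e\cdot f(\,\cdot\bmod q)$ is exactly the paper's $e(f)$ under the isomorphism $\ZZ_{pq}\cong\ZZ_p\times\ZZ_q$, and your normal form $h=L+\tilde F$ is exactly the decomposition $e(g)+h_{(\mathbfsl{a},\mathbfsl{b})}$ that the paper builds into the definition of $\gamma(C)$. The only presentational difference is that the paper defines $\gamma(C)$ directly as the set of such sums and then verifies closure under composition, whereas you define $\Phi(C)$ as the generated clone and then prove the normal form and its uniqueness as a theorem; the underlying verifications (closure properties (1) and (2) of the clonoid absorbing exactly the clone compositions) are the same, as is the join-preservation argument via a layer-by-layer generation of $C_1\vee C_2$.
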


In Section \ref{SecIndipAlg} we investigate the part of the lattice of all clones above $\Clo(\langle\ZZ_{pq},+\rangle)$ which is composed by all clones that can be split in two components over $\ZZ_p$ and over $\ZZ_q$. These are the clones which preserve both $\pi_1$ and $\pi_2$. We will show a characterization of these clones that can be proven also using \cite[Lemma $6.1$]{AM.IOAW}.

\begin{Thm}\label{ThmembeddingClones}
	
	Let $p$ and $q$ be distinct prime numbers. Then there is an isomorphism between the lattice of all clones above $\Clo(\langle\ZZ_{pq},+\rangle)$ which preserve $\{\pi_1, \pi_2\}$ and the direct product of the lattices of all clones above $\Clo(\langle\ZZ_{p},+\rangle)$ and of all clones above $\Clo(\langle\ZZ_{q},+\rangle)$.
	
\end{Thm}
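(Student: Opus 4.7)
The plan is to exploit the Chinese Remainder decomposition of $\ZZ_{pq}$. Interpret $\pi_1, \pi_2$ as the two non-trivial congruences of $\ZZ_{pq}$, with quotients $\ZZ_p$ and $\ZZ_q$ respectively. Because $\pi_1 \cap \pi_2 = \Delta$ and $\pi_1 \vee \pi_2 = \nabla$ and these congruences permute, every function $f \colon \ZZ_{pq}^n \to \ZZ_{pq}$ that preserves both is uniquely determined by its induced quotient functions $f/\pi_1 \colon \ZZ_p^n \to \ZZ_p$ and $f/\pi_2 \colon \ZZ_q^n \to \ZZ_q$, and every such pair is realised by some $f$.

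Define, for a clone $C$ above $\Clo(\langle\ZZ_{pq},+\rangle)$ preserving $\{\pi_1,\pi_2\}$,
\[
\Phi(C) := (C/\pi_1,\; C/\pi_2), \qquad C/\pi_i := \{f/\pi_i : f \in C\},
\]
and, for clones $D_p \supseteq \Clo(\langle\ZZ_p,+\rangle)$ and $D_q \supseteq \Clo(\langle\ZZ_q,+\rangle)$,
\[
\Psi(D_p, D_q) := \{f : f \text{ preserves } \pi_1,\pi_2,\ f/\pi_1 \in D_p,\ f/\pi_2 \in D_q\}.
\]
Routine verifications show that each $C/\pi_i$ is a clone containing $+$ on $\ZZ_p$ (resp.\ $\ZZ_q$), since $+_{\ZZ_{pq}}$ passes to both quotients; that $\Psi(D_p,D_q)$ is a clone containing $\Clo(\langle\ZZ_{pq},+\rangle)$ and preserving $\{\pi_1,\pi_2\}$; and that $\Phi$ and $\Psi$ are both order-preserving.

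The crux is the equality $\Psi(\Phi(C)) = C$, of which only the inclusion $\supseteq$ is non-trivial. Given $f$ preserving $\pi_1,\pi_2$ with $f/\pi_1 = g/\pi_1$ and $f/\pi_2 = h/\pi_2$ for some $g,h \in C$, fix by B\'ezout integers $e_p, e_q \in \{0,\dots,pq-1\}$ with $e_p \equiv 1 \pmod{p}$, $e_p \equiv 0 \pmod{q}$, and $e_q = 1 - e_p$. Then
\[
f'(x_1,\dots,x_n) := e_p\, g(x_1,\dots,x_n) \;+\; e_q\, h(x_1,\dots,x_n)
\]
agrees with $g$ modulo $\pi_1$ and with $h$ modulo $\pi_2$, so $f' = f$ since $\pi_1 \cap \pi_2 = \Delta$. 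The unary maps $x \mapsto e_p x$ and $x \mapsto e_q x$ are iterated sums and therefore lie in $\Clo(\langle\ZZ_{pq},+\rangle) \subseteq C$, whence $f = f' \in C$. The same gluing yields $\Phi \circ \Psi = \id$: any pair $(g,h) \in D_p \times D_q$ is realised as the quotient pair of $e_p\,\tilde g + e_q\,\tilde h$ for arbitrary lifts $\tilde g, \tilde h$ of $g, h$ to functions on $\ZZ_{pq}^n$. I expect this gluing to be the main obstacle; it is precisely where the hypothesis $C \supseteq \Clo(\langle\ZZ_{pq},+\rangle)$ is essential, because it guarantees that the ``component projectors'' $x \mapsto e_p x$ and $x \mapsto e_q x$ are already available in $C$. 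Once the bijection is established, it is automatically a lattice isomorphism, since both $\Phi$ and $\Psi$ are order-preserving.
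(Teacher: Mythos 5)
Your proof is correct and follows essentially the same route as the paper: you identify the lattice with a product via the quotient (respectively, componentwise-pairing) maps, and the gluing step uses the orthogonal idempotents of $\ZZ_{pq}$ --- your B\'ezout elements $e_p, e_q$ are exactly the $q^{p-1}$ and $p^{q-1}$ the paper produces via Fermat's little theorem, and your $C/\pi_i$ coincides, via Lemma~\ref{Lem1-3}(1)--(2), with the paper's $\rho_i(C)$ obtained by restricting to $\ZZ_p^n\times\{\mathbf 0_n\}$ (resp.\ $\{\mathbf 0_n\}\times\ZZ_q^n$) and projecting.
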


In Section \ref{AGenBoun} we can find the main results of this paper that regard the cardinality of the lattice of all clones on $\ZZ_{pq}$ that contain all linear mappings. 

\begin{Thm}
	\label{Thmgeneralembedding}
	Let $p$ and $q$ be distinct prime numbers and let $\mathbf{Clo}^\mathcal{L}(\langle \ZZ_{pq},+\rangle)$ be the lattice of all clones containing $\Clo(\langle \ZZ_{pq},+\rangle)$. Then there is an injective function from $\mathbf{Clo}^{\mathcal{L}}(\langle \ZZ_{pq},+\rangle)$ to the direct product of the lattice of all $(\ZZ_p,\ZZ_q)$-linearly closed clonoids, $\mathcal{L}(\ZZ_p,\ZZ_q)$, to the $p+1$ power and the lattice of all $(\ZZ_q,\ZZ_p)$-linearly closed clonoids, $\mathcal{L}(\ZZ_q,\ZZ_p)$, to the $q+1$ power, i. e:
	
	\begin{center}

		$\Clo^{\mathcal{L}}(\langle \ZZ_{pq},+\rangle)\hookrightarrow \mathcal{L}(\ZZ_p,\ZZ_q)^{p+1} \times \mathcal{L}(\ZZ_q,\ZZ_p)^{q+1}.$
		
	\end{center}

\end{Thm}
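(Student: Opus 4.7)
The strategy is to exploit the Chinese Remainder isomorphism $\ZZ_{pq}\cong\ZZ_p\times\ZZ_q$ to decompose each operation of a clone $C\supseteq\Clo(\langle\ZZ_{pq},+\rangle)$ into fragments recorded by a tuple of $(\ZZ_p,\ZZ_q)$- and $(\ZZ_q,\ZZ_p)$-linearly closed clonoids. Every $n$-ary $f\in C$ corresponds under CRT to a pair $(f^{(1)},f^{(2)})$ with $f^{(1)}\colon\ZZ_p^n\times\ZZ_q^n\to\ZZ_p$ and $f^{(2)}\colon\ZZ_p^n\times\ZZ_q^n\to\ZZ_q$. I would attach to $C$ a tuple in $\mathcal{L}(\ZZ_p,\ZZ_q)^{p+1}\times\mathcal{L}(\ZZ_q,\ZZ_p)^{q+1}$ by selecting a fixed list of $p+1$ canonical ways to eliminate the $\ZZ_p$-entries of $f^{(1)}$ (and dually $q+1$ ways on the other side), so that what remains is a function $\ZZ_q^n\to\ZZ_p$ (resp.\ $\ZZ_p^n\to\ZZ_q$), and then recording the corresponding sets of slices as $f$ and $n$ vary. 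The $p+1$ elimination schemes are to be chosen so that every argument pattern $\vec a\in\ZZ_p^n$ factors through one of them via $\ZZ_{pq}$-linear substitutions already available in $\Clo(\langle\ZZ_{pq},+\rangle)$.

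For each slot $a$, the set
\[
C_a^{(1)}\;:=\;\{\,\vec y\mapsto f^{(1)}(\vec a,\vec y)\;:\; f\in C,\ \vec a\text{ coming from slot }a\,\}
\]
is then a $(\ZZ_p,\ZZ_q)$-linearly closed clonoid: the clone $\Clo(\langle\ZZ_{pq},+\rangle)$ contains the scalar multiplications $x\mapsto pk\,x$, which act as $0$ on the $\ZZ_p$-coordinate and as the scalar $pk\bmod q$ on the $\ZZ_q$-coordinate for every $k\in\ZZ$, so that all $\ZZ_q$-linear substitutions on the $\vec y$-arguments are realized inside $C$; the $q+1$ clonoids on the other side arise symmetrically from the scalar multiplications by multiples of $q$. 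Injectivity is the heart of the statement. If two clones $C,C'$ produce the same tuple, I would recover every $f\in C$ from its recorded slices by first splitting $f=e_p\cdot f+e_q\cdot f$ via the CRT idempotents (multiplications by which are unary terms of $\langle\ZZ_{pq},+\rangle$, hence lie in every clone above), isolating the $f^{(1)}$- and $f^{(2)}$-parts inside $C$; then, within each part, I would write an arbitrary $\vec a\in\ZZ_p^n$ as a $\ZZ_{pq}$-linear combination of the $p+1$ canonical elimination vectors, which lifts to an expression of $f$ as a term-combination of slices drawn from the recorded clonoids.

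The main obstacle will be choosing the $p+1$ (respectively $q+1$) elimination schemes so that this linear factorization actually works with $\ZZ_{pq}$-coefficients, and verifying that the recomposition stays inside $C$ without collapsing information coming from different slots. Theorems~\ref{ThEmbClonoids} and~\ref{ThmembeddingClones} should enter at this point: the former guarantees that each slice clonoid is faithfully represented by a clone between $\Clo(\langle\ZZ_{pq},+\rangle)$ and $C$, while the latter lets us treat the $\{\pi_1,\pi_2\}$-preserving portion of $C$ separately from the genuinely cross-coordinate contribution that the slice clonoids are designed to record. Once both facts are in hand, equality of tuples translates, via the reconstruction above, into agreement of every $f\in C$ with the corresponding element of $C'$, giving the desired injectivity.
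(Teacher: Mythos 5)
There is a genuine gap, and it sits exactly where you flag the ``main obstacle.'' You propose that the $p+1$ coordinates of the image correspond to $p+1$ ``canonical elimination schemes'' or argument patterns in $\ZZ_p^n$, chosen so that every $\vec a\in\ZZ_p^n$ factors through one of them by $\ZZ_{pq}$-linear substitutions. This cannot be made to work: the arity $n$ is unbounded while the number of schemes is a fixed constant $p+1$, so for large $n$ a finite list of fixed vectors (or affine substitutions) can neither span $\ZZ_p^n$ nor reach all argument patterns. Likewise, your sets $C_a^{(1)}$ are built by freezing the $\ZZ_p$-argument at values associated to a ``slot,'' but there are $p^n$ such values at arity $n$, and recovering $f^{(1)}$ from a bounded number of slices at a fixed set of $\vec a$'s is simply not possible in general. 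Without a precise and workable meaning for the indices $0,\dots,p$, the injectivity argument (``write $\vec a$ as a $\ZZ_{pq}$-linear combination of the $p+1$ canonical vectors'') does not get off the ground.

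The paper's proof takes a different route that resolves exactly this. It represents any $f^{(1)}\colon\ZZ_p^n\times\ZZ_q^n\to\ZZ_p$ as a polynomial in the $\ZZ_p$-variables with coefficients in the function ring $\ZZ_p^{\ZZ_q^n}$ (Lemma \ref{Lem2Genexprofa}), then extracts each monomial $\overline{r\,x_1\cdots x_d}$ from $f$ using the $\mathbf{R}$-polynomial clonoid lemmata (\ref{LemMoninZpq}, \ref{LemConnInduced}, \ref{Lemfcontmon}). The index $i\in\{0,1,\dots,p\}$ does not record a slot of $\ZZ_p$-arguments; it records the \emph{total degree} of the monomial, and Lemma \ref{LemMonVeri} shows that degrees $\geq 2$ collapse modulo $p-1$ down to the range $\{2,\dots,p\}$, with degrees $0$ and $1$ kept separately because constant and linear monomials cannot be raised or lowered by composition. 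That is the source of the ``$p+1$,'' and it is what lets one reconstruct $q^{p-1}f$ monomial by monomial from the tuple $(\rho_0(C),\dots,\rho_p(C))$ and thus prove injectivity. Your CRT idempotent decomposition $f=q^{p-1}f+p^{q-1}f$ is the right first move, and the observation that $\Clo(\langle\ZZ_{pq},+\rangle)$ supplies all $\ZZ_q$-linear substitutions is also used, but the polynomial-degree bookkeeping and the degree-reduction mod $p-1$ are the missing ideas you would need before the proof can be completed.
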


Furthermore, from this result we can obtain a bound for the number of clones on $\ZZ_{pq}$ that contain $\Clo(\langle \ZZ_{pq},+\rangle)$. 

\begin{Cor}
	\label{Corfinale}
	Let $p$ and $q$ be distinct prime numbers. Let $\prod_{i =1}^n p_i^{k_i}$ and $\prod_{i =1}^s r_i^{d_i}$ be the factorizations of respectively $g_p= x^{q-1} -1$ in $\mathbb{Z}_p[x]$ and $g_q = x^{p-1} -1$ in $\mathbb{Z}_q[x]$ into their irreducible divisors. Then the number $k$ of clones containing $\Clo(\langle\ZZ_{pq},+\rangle)$ is bounded by:
	
	\begin{equation}
	2(\prod_{i =1}^n(k_i +1) + \prod_{i =1}^s(d_i +1) ) - 1\leq k \leq 2^{p+q+2}\prod_{i =1}^n(k_i +1)^{p+1}\prod_{i =1}^s(d_i +1)^{q+1} \leq 2^{qp+q+p}.
	\end{equation}
\end{Cor}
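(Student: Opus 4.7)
The plan is to combine Theorem~\ref{Thmgeneralembedding} for the upper bound with Theorem~\ref{ThEmbClonoids} (and its symmetric $(q,p)$-version) for the lower bound, feeding in the explicit cardinalities of the lattices $\mathcal{L}(\ZZ_p,\ZZ_q)$ and $\mathcal{L}(\ZZ_q,\ZZ_p)$ from \cite[Theorems $1.2$ and $1.3$]{Fio.CSOF}. In terms of the given factorizations, those cardinalities are $|\mathcal{L}(\ZZ_p,\ZZ_q)|=2\prod_{i=1}^n(k_i+1)$ and $|\mathcal{L}(\ZZ_q,\ZZ_p)|=2\prod_{i=1}^s(d_i+1)$, which is the input I would take as a black box.

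For the \emph{upper bound}, I would apply Theorem~\ref{Thmgeneralembedding} to obtain
\[
k \;\le\; |\mathcal{L}(\ZZ_p,\ZZ_q)|^{p+1}\cdot|\mathcal{L}(\ZZ_q,\ZZ_p)|^{q+1} \;=\; 2^{p+q+2}\prod_{i=1}^n(k_i+1)^{p+1}\prod_{i=1}^s(d_i+1)^{q+1},
\]
which is the middle expression in the stated chain. For the final numerical estimate I would use the elementary inequality $k_i+1\le 2^{k_i}\le 2^{k_i\deg p_i}$, so that $\prod_{i=1}^n(k_i+1)\le 2^{\sum k_i\deg p_i}=2^{q-1}$, and symmetrically $\prod_{i=1}^s(d_i+1)\le 2^{p-1}$. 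Raising these to the powers $p+1$ and $q+1$ and collecting exponents gives a bound of the form $2^{\alpha(p,q)}$, which I would then simplify to the claimed $2^{pq+p+q}$.

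For the \emph{lower bound}, I would use Theorem~\ref{ThEmbClonoids} to inject $\mathcal{L}(\ZZ_p,\ZZ_q)$ into $\Clo^{\mathcal{L}}(\langle\ZZ_{pq},+\rangle)$, and the completely symmetric statement (obtained from the same proof by swapping the roles of $p$ and $q$) to inject $\mathcal{L}(\ZZ_q,\ZZ_p)$. This produces two subfamilies of clones above $\Clo(\langle\ZZ_{pq},+\rangle)$ of sizes $2\prod(k_i+1)$ and $2\prod(d_i+1)$. Both embeddings send the bottom clonoid to $\Clo(\langle\ZZ_{pq},+\rangle)$ itself, so these subfamilies always share at least that one clone; the bound $2(\prod(k_i+1)+\prod(d_i+1))-1$ then follows by inclusion--exclusion once one shows that this is the \emph{only} shared clone.

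The main obstacle is precisely this last intersection claim: one must check, using the explicit description of the embedding from Theorem~\ref{ThEmbClonoids}, that no nontrivial $(\ZZ_p,\ZZ_q)$-linearly closed clonoid and nontrivial $(\ZZ_q,\ZZ_p)$-linearly closed clonoid can produce the same clone. I expect this to follow by exhibiting, for each nontrivial clonoid, a witnessing polynomial operation of the associated clone that respects the $\ZZ_p$-structure (respectively $\ZZ_q$-structure) in a way incompatible with the symmetric origin; the asymmetry between $\pi_1$ and $\pi_2$ exploited in Section~\ref{SecIndipAlg} should provide the required separation. The upper bound, by contrast, is bookkeeping once Theorem~\ref{Thmgeneralembedding} and the enumeration in \cite{Fio.CSOF} are in hand.
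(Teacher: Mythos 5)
Your approach coincides with the paper's: the upper bound is exactly Corollary~\ref{Cor1} (Theorem~\ref{Thmgeneralembedding} together with the cardinalities $|\mathcal{L}(\ZZ_p,\ZZ_q)|=2\prod(k_i+1)$ and $|\mathcal{L}(\ZZ_q,\ZZ_p)|=2\prod(d_i+1)$ from \cite[Theorem~$1.3$]{Fio.CSOF}), and the lower bound comes from Theorem~\ref{ThEmbClonoids} and its $(q,p)$-symmetric version, whose images meet only in the clone of linear maps. Two remarks are worth making.

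First, the final numerical step you describe does not actually produce the stated bound. With $\prod_{i=1}^n(k_i+1)\le 2^{q-1}$ and $\prod_{i=1}^s(d_i+1)\le 2^{p-1}$, raising to the powers $p+1$ and $q+1$ and collecting exponents yields
$(p+q+2)+(q-1)(p+1)+(p-1)(q+1)=2pq+p+q$,
that is $2^{2pq+p+q}$, not $2^{pq+p+q}$. This is consistent with the paper's own remark just after Corollary~\ref{Cor1} that ``in the worst case this bound is equal to $2^{2qp+q+p}$,'' so the exponent $qp+q+p$ in the displayed inequality of Corollary~\ref{Corfinale} appears to be a misprint; your derivation as described establishes $2^{2pq+p+q}$, not the printed exponent.

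Second, the intersection claim that you flag as the main obstacle in the lower bound is substantially more direct than you suggest and does not need the $\pi_1/\pi_2$ apparatus of Section~\ref{SecIndipAlg}. From the definition in \eqref{equ:1}, every $n$-ary $f\in\gamma(C)$ has $\ZZ_p$-component of the form $(\mathbf{x},\mathbf{y})\mapsto g(\mathbf{y})+\langle\mathbf{a},\mathbf{x}\rangle$ with $g\in C$, while every element of the image of the symmetric embedding has $\ZZ_p$-component of the form $(\mathbf{x},\mathbf{y})\mapsto\langle\mathbf{a}',\mathbf{x}\rangle$, independent of $\mathbf{y}$. If $\gamma(C)$ lies in that image, then for every $g\in C$ the function $(\mathbf{x},\mathbf{y})\mapsto(g(\mathbf{y}),0)$ must have $\ZZ_p$-component equal to some $\langle\mathbf{a}',\mathbf{x}\rangle$; setting $\mathbf{x}=\mathbf{0}$ forces $g\equiv 0$. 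Hence only $\Clo(\langle\ZZ_{pq},+\rangle)$ is common to the two images, which is exactly the paper's ``we are counting two times the clone composed by all linear functions.''
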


This theorem extends the finiteness result in \cite{AM.PCOG} to clones on $\ZZ_{pq}$ which do not necessarily contain constants, with $p$ and $q$ distinct primes. The main ingredient we used is \cite[Theorem $1.3$]{Fio.CSOF}, which provides a characterization of the lattice of all $(\FF_p,\FF_q)$-linearly closed clonoids for all $p$ and $q$ powers of distinct primes.

We can also use the proof of Theorem \ref{Thmgeneralembedding} to find a concrete bound on the arity of the generators of clones containing $\Clo(\langle\ZZ_{pq},+\rangle)$.

\begin{Cor}
	\label{CorArFun}
	Let $p$ and $q$ be distinct prime numbers. Then the clones containing $\Clo(\langle\ZZ_{pq},+\rangle)$ can be generated by a set of functions of arity at most $max(\{p,q\})$.
	
\end{Cor}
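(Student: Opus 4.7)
The plan is to read generators of $C$ directly off the injective map of Theorem \ref{Thmgeneralembedding}. Let $C$ be any clone with $\Clo(\langle\ZZ_{pq},+\rangle)\subseteq C$. Theorem \ref{Thmgeneralembedding} associates to $C$ a tuple $(C_0,\dots,C_p,D_0,\dots,D_q)$ in $\mathcal{L}(\ZZ_p,\ZZ_q)^{p+1}\times \mathcal{L}(\ZZ_q,\ZZ_p)^{q+1}$, and this assignment is injective; so $C$ is completely determined by the tuple. If each $C_i$ and $D_j$ can be realized using clonoid-generators of bounded arity, I can lift those generators to operations on $\ZZ_{pq}$ of the same arity and use injectivity to show that, together with $+$, they generate $C$.

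The key input is \cite[Theorem 1.3]{Fio.CSOF}, which enumerates all $(\ZZ_p,\ZZ_q)$- and $(\ZZ_q,\ZZ_p)$-linearly closed clonoids explicitly through the irreducible factorizations of $x^{q-1}-1$ in $\ZZ_p[x]$ and $x^{p-1}-1$ in $\ZZ_q[x]$. From that explicit description one extracts the fact that every $(\ZZ_p,\ZZ_q)$-linearly closed clonoid is generated, in the sense of Definition \ref{DefClo}, by a set of functions of arity at most $q$, and dually every $(\ZZ_q,\ZZ_p)$-linearly closed clonoid by functions of arity at most $p$. Hence each of the $p+q+2$ components in the image of $C$ has a clonoid-generating set of arity at most $\max\{p,q\}$.

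Finally, via the CRT isomorphism $\ZZ_{pq}\cong \ZZ_p\times \ZZ_q$, a clonoid-generator $f\colon \ZZ_q^n\to \ZZ_p$ of arity $n\leq \max\{p,q\}$ lifts to an $n$-ary function on $\ZZ_{pq}$ lying in $C$; the construction parallels the embedding of Section~\ref{EmbClo}. Let $C'\subseteq C$ be the subclone generated by all these lifts together with $\Clo(\langle\ZZ_{pq},+\rangle)$. By construction, the tuple attached to $C'$ by the injection of Theorem \ref{Thmgeneralembedding} contains each $C_i$ and $D_j$, and it cannot exceed them because $C'\subseteq C$. Thus $C'$ and $C$ have the same image under the injection, and injectivity forces $C'=C$, proving that $C$ is generated by operations of arity at most $\max\{p,q\}$.

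The main obstacle is this last step: one must carefully track how the projection onto each of the $p+q+2$ coordinates of Theorem \ref{Thmgeneralembedding} interacts with the CRT lifting and with clone-generation, in order to be certain that the subclone $C'$ really produces each coordinate in full and not merely a sub-clonoid of it. This bookkeeping, rather than the arity bound for clonoids (which is a direct consequence of \cite[Theorem 1.3]{Fio.CSOF}), is where the work lies.
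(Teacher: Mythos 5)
Your outline --- recover the tuple $(\rho_0(C),\dots,\rho_p(C),\psi_0(C),\dots,\psi_q(C))$ from lifted clonoid generators and then invoke the injectivity of Theorem \ref{Thmgeneralembedding} --- is a viable alternative to the paper's route, but the step you flag as ``bookkeeping'' is a real gap as written. The lift you describe, modeled on the embedding $e$ of Section~\ref{EmbClo}, sends $g:\ZZ_q^n\to\ZZ_p$ to $(\mathbfsl{x},\mathbfsl{y})\mapsto (g(\mathbfsl{y}),0)$. That function lies in $C$ precisely when $g\in\rho_0(C)$, and putting it into $C'$ only contributes $g$ to the $\rho_0$-coordinate of $C'$. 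A generator $g_i$ of $\rho_i(C)$ for $i\geq 1$ need not lie in $\rho_0(C)$ at all, so its $e$-lift may not even belong to $C$; and even if it does, $\rho_i(C')$ does not acquire $g_i$ from it. Consequently you cannot conclude that the tuple attached to $C'$ contains $C_1,\dots,C_p$ and $D_1,\dots,D_q$, and the injectivity argument does not close.

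The repair is to use the lift $\overline{g_i\,x_1\cdots x_i}$ (resp.\ $\overline{h_j\,y_1\cdots y_j}$ on the $(\ZZ_q,\ZZ_p)$ side), where $g_i$ generates $\rho_i(C)$: by definition of $\rho_i$ this function lies in $C$, and since $\rho_i(C')$ is a $(\ZZ_p,\ZZ_q)$-linearly closed clonoid containing $g_i$, one gets $\rho_i(C')\supseteq\Cid(\{g_i\})=\rho_i(C)$, while $C'\subseteq C$ gives the reverse; injectivity then yields $C'=C$. Taking $g_i$ unary, which \cite[Theorem $1.2$]{Fio.CSOF} permits, these lifts have arity $i\leq p$, resp.\ $j\leq q$, so the bound $\max\{p,q\}$ follows. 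This is essentially the content of the paper's argument, but the paper does not run the injectivity step: Corollary \ref{CorArFun} is deduced directly from Theorem \ref{Cor3}, which exhibits the explicit generating sets $L$ and $R$ of exactly these induced monomials and proves, via the polynomial machinery (Lemmata \ref{LemMoninZpq}, \ref{LemConnInduced}, \ref{LemMonVeri}, \ref{Lemfcontmon}), that every $f\in C$ decomposes as a sum of monomials in $\Clg(L\cup R)$, giving the generation claim constructively rather than by comparison of images.
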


This corollary provides a bound for the arity of the generators of a clone containing $\Clo(\langle\ZZ_{pq},+\rangle)$ which is  $max(\{p,q\})$ and gives a rather low and unexpected bound for the arity of the functions that really determine the clones.

Furthermore, we use this result to find a description of some parts of the lattice of all clones above $\Clo(\langle\ZZ_{pq},+\rangle)$. We denote by $\pi_1, \pi_2$ the kernels of the two binary projections from $\ZZ_p \times \ZZ_q$ to respectively $\ZZ_p$ and $\ZZ_q$. 

\begin{Thm}\label{Thmembaddingg}
	Let $p$ and $q$ be distinct prime numbers. Then there is an injective function from the lattice of all clones above $\Clo(\langle \ZZ_{pq},+\rangle)$ that preserve $\pi_1$ and $[\pi_1,\pi_1] = 0$ to the direct product of the lattice of all clones above $\Clo(\langle \ZZ_{p},+\rangle)$ and the square of the lattice of all $(\ZZ_q,\ZZ_p)$-linearly closed clonoids.
\end{Thm}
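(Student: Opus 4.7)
The plan is to construct an injection $\Phi$ from the described sublattice into $\mathbf{Clo}^{\mathcal{L}}(\langle\ZZ_p,+\rangle)\times\mathcal{L}(\ZZ_q,\ZZ_p)^2$ by associating to each clone $C$ three natural pieces of data. Identifying $\ZZ_{pq}$ with $\ZZ_p\times\ZZ_q$ via the Chinese Remainder Theorem, preservation of $\pi_1$ lets us write every $n$-ary $f\in C$ as $f((x_1,y_1),\ldots,(x_n,y_n))=(f_1(\bar x),f_2(\bar x,\bar y))$ with $f_1\colon\ZZ_p^n\to\ZZ_p$. Because $C$ contains $+$ and $[\pi_1,\pi_1]=0$, the term condition forces $\bar y\mapsto f_2(\bar x,\bar y)$ to be $\ZZ_q$-affine for each fixed $\bar x$, so
\[
f_2(\bar x,\bar y)=\sum_{i=1}^n a^{(f)}_i(\bar x)\,y_i+c^{(f)}(\bar x),
\]
with $a^{(f)}_i,c^{(f)}\colon\ZZ_p^n\to\ZZ_q$ uniquely determined by $f$. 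I then set $\Phi(C):=(\mathcal{D}_C,\mathcal{K}_C,\mathcal{A}_C)$, where $\mathcal{D}_C:=\{f_1:f\in C\}$, $\mathcal{K}_C:=\{c^{(f)}:f\in C\}$, and $\mathcal{A}_C$ is the smallest $(\ZZ_q,\ZZ_p)$-linearly closed clonoid containing every coefficient function $a^{(f)}_i$.

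Well-definedness of $\Phi$ is checked using the CRT idempotents $e_p,e_q\in\Clo(\langle\ZZ_{pq},+\rangle)$, corresponding to $(1,0)$ and $(0,1)$ under the CRT isomorphism. The set $\mathcal{D}_C$ is a clone above $\Clo(\langle\ZZ_p,+\rangle)$ since $C$ preserves $\pi_1$ and contains $+$. The ``isolated'' constant part $e_q\cdot f(e_p u_1,\ldots,e_p u_n)=(0,c^{(f)}(\bar x))$ lies in $C$, so $\mathcal{K}_C$ inherits closure under $\ZZ_q$-linear combinations of outputs (from integer linear combinations in $C$) and under $\ZZ_p$-linear input transformations (from integer lifts of $\ZZ_p$-matrices, which are in $\Clo(\langle\ZZ_{pq},+\rangle)$).

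For injectivity, suppose $C_1\neq C_2$ both satisfy the hypotheses with $\Phi(C_1)=\Phi(C_2)$ and pick $f\in C_1\setminus C_2$. I reconstruct $f$ inside $C_2$ as the sum of three kinds of isolated pieces: $(\bar u)\mapsto(f_1(\bar x),0)=e_p\cdot g(\bar u)$ for a witness $g\in C_2$ with $g_1=f_1$; $(\bar u)\mapsto(0,c^{(f)}(\bar x))$ from a witness for $c^{(f)}\in\mathcal{K}_{C_2}$; and, for each $i$, $(\bar u)\mapsto(0,a^{(f)}_i(\bar x)y_i)$ from witnesses for $a^{(f)}_i\in\mathcal{A}_{C_2}$. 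The hard part will be the family of linear pieces, because $a^{(f)}_i$ may arise in $\mathcal{A}_{C_2}$ only after several clonoid operations are applied to raw coefficients $a^{(g)}_j$ of elements $g\in C_2$. The hypothesis $[\pi_1,\pi_1]=0$ is what makes this step work: it guarantees that the isolated linear pieces $(\bar u)\mapsto(0,a(\bar x)y_i)$ inside any such clone form an additive subgroup stable under permutations of variables and under $\ZZ_p$-linear input transformations, so every $(\ZZ_q,\ZZ_p)$-clonoid operation on $\mathcal{A}_{C_2}$ lifts to a clone operation on these pieces, completing the reconstruction and contradicting $f\notin C_2$.
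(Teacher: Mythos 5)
Your proposal follows the paper's route: decompose each $f\in C$ via the $\ZZ_q$-affine form that $[\pi_1,\pi_1]=0$ forces (Lemma~\ref{Lem3}), send $C$ to the triple of closure structures formed by the $\ZZ_p$-parts, the constant parts, and the $\ZZ_q$-linear coefficients (your $\mathcal{D}_C,\mathcal{K}_C,\mathcal{A}_C$ coincide with the paper's $\rho_1(C),\psi_0(C),\psi_1(C)$), and recover $f$ from its isolated pieces inside $C_2$ to get injectivity. One correction to your attribution: the stability of the isolated linear pieces $(\mathbfsl{x},\mathbfsl{y})\mapsto(0,a(\mathbfsl{x})y_i)$ under $\ZZ_q$-linear combinations and $\ZZ_p$-substitutions of the $\ZZ_p$-variables, which you single out as the ``hard part,'' follows from $C\supseteq\Clo(\langle\ZZ_{pq},+\rangle)$ alone (it is precisely the well-definedness of $\psi_1$ established in the proof of Theorem~\ref{Thmgeneralembedding}); the commutator hypothesis $[\pi_1,\pi_1]=0$ is what puts $f$ into the affine form and so makes the three pieces well-defined in the first place, not what drives the lifting step.
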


\section{Preliminaries and notation}\label{Preliminaries3}

We use boldface letters for vectors, e. g., \index{$\mathbfsl{u}$}$\mathbfsl{u} = (u_1,\dots,u_n)$ for some $n \in \NN$. Moreover, we will use $\langle\mathbfsl{v}, \mathbfsl{u}\rangle$ for the scalar product of the vectors $\mathbfsl{v}$ and $\mathbfsl{u}$. Let $A$ be a set and let \index{$0_A \in A$}$0_A \in A$. We denote by \index{$\mathbf{0}_{A^n}$} \index{$\mathbf{0}_{n}$}$\mathbf{0}_{A^n}$ a constant $0_A$ vector of length $n$.

We denote by \index{$[n]$}$[n]$ the set $\{i \in \NN\mid 1 \leq i \leq n\}$ and by \index{$[n]_0$}$[n]_0$ the set $[n] \cup \{0\}$. Moreover we denote by \index{$\NN_0$}$\NN_0$ the set $\NN \cup \{0\}$. Let $\mathbfsl{x} \in \ZZ_p^n$ and let $\mathbfsl{a} \in [p-1]_0^n$. Then we denote by \index{$\mathbfsl{x}^{\mathbfsl{a}}$}$\mathbfsl{x}^{\mathbfsl{a}}$ the product $\prod_{i =1}^n(\mathbfsl{x})_i^{(\mathbfsl{a})_i}$.

From now one we will consider the group $\ZZ_p \times \ZZ_q$ instead of $\mathbb{Z}_{pq}$. We can see that the two groups are isomorphic and thus equivalent for our purpose. We see that the congruence lattice of $\ZZ_p \times \ZZ_q$ is the square lattice with the four congruences $\{0, 1, \pi_1 , \pi_2\}$. 

The reason why we consider $\ZZ_p \times \ZZ_q$ instead of $\mathbb{Z}_{pq}$ is that we want to distinguish the component $\ZZ_p$ from $\ZZ_q$ when we consider the domain or the codomain of a finitary function from $\ZZ_p \times \ZZ_q$ to itself.
Moreover, we consider $\ZZ_p^n \times \ZZ_q^n$ instead of $(\ZZ_p \times \ZZ_q)^n$ as domain of the functions we want to study. Let $f:\ZZ_p^n \times \ZZ_q^n \rightarrow \ZZ_p \times \ZZ_q$. We denote by $f_p:\ZZ_p^n \times \ZZ_q^n \rightarrow \ZZ_p$ the function $\pi_p^{\ZZ_p \times \ZZ_q} \circ f$ and by $f_q:\ZZ_p^n \times \ZZ_q^n \rightarrow \ZZ_q$  the function $\pi_q^{\ZZ_p \times \ZZ_q} \circ f$, where $\circ$ is the symbol for the composition of functions and $\pi_i^{\ZZ_p \times \ZZ_q}:\ZZ_p \times \ZZ_p \rightarrow \ZZ_{i}$ is the projection over $\ZZ_i$, with $i \in \{p,q\}$. Given a vector of variables $\mathbfsl{x}= (x_1,\dots,x_n)$ and $\mathbfsl{m} \in \ZZ_0^n$ we write \index{$\mathbfsl{x}^{\mathbfsl{m}}$}$\mathbfsl{x}^{\mathbfsl{m}}$ for $\prod_{i=1}^nx_i^{m_i}$.

Let $S$ be a set of finitary functions from a set $A$ to itself . We denote by \index{$\Clg(S)$}$\Clg(S)$ the \emph{clone generated by} $S$ on $A$. Let $p$ and $q$ be powers of distinct primes. We write \index{$\Cid(F)$}$\Cid(F)$ for the $(\FF_p,\FF_q)$-linearly closed clonoid generated by a set of functions $F$, as defined in \cite{Fio.CSOF}.

\section{Facts about clones}

Let $n \in \NN$. We denote by  $\mathbf{Clo}^{\mathcal{L}}(\langle\ZZ_{n},+\rangle)$ the lattice of all clones containing $\Clo(\langle \ZZ_{n},+\rangle)$.

In \cite{AM.PCOG} we can find a complete description of the polynomial clones (clones containing all constants) which contain $\Clo(\langle\ZZ_{pq},+\rangle)$. In figure $1$ we can see the lattice of all $17$ distinct polynomial clones containing $\Clo(\langle\ZZ_{pq},+\rangle)$, realized by P. Mayr.

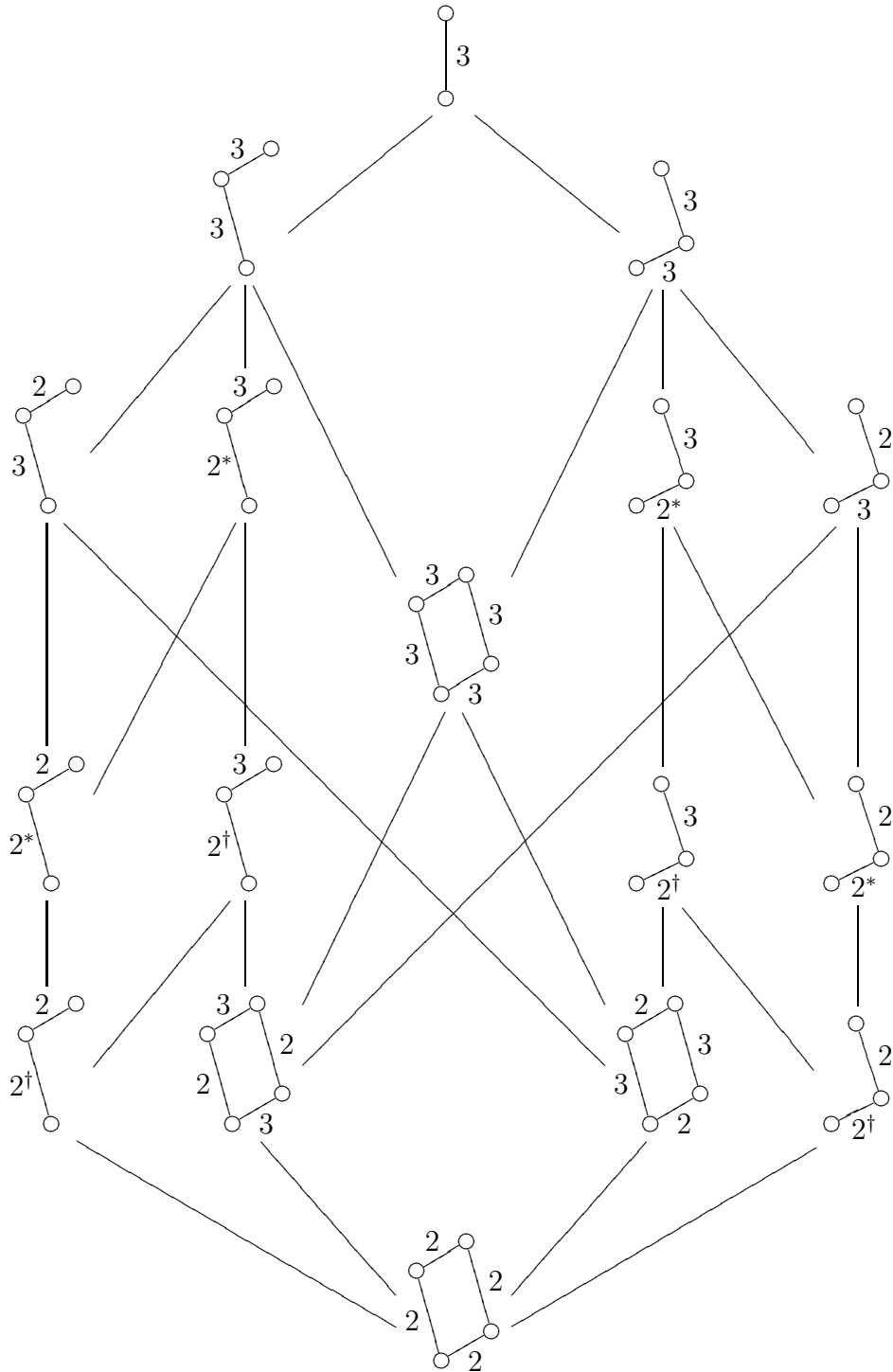
\begin{figure} \label{fig:clones}
	\centerline{ 
		\xymatrix{
			& & \latts\edge{dl}\edge{dr} & & \\
			& \lattl{3}{3}\edge{ddl}\edge{dd}\edge{dddr} & & \lattr{3}{3}\edge{dddl}\edge{dd}\edge{ddr} & \\
			& & & & \\
			\lattl{3}{2}\edge{dd}\edge{ddddrrr} & \lattl{2^*}{3} \edge{ddl}\edge{dd} & & \lattr{3}{2^*}\edge{dd}\edge{ddr} & \lattr{2}{3}\edge{dd}\edge{ddddlll}  \\ 
			& & \lattf{3}{3}\edge{dddl}\edge{dddr} & & \\
			\lattl{2^*}{2}\edge{dd} & \lattl{2^\dagger}{3}\edge{ddl}\edge{dd} & & \lattr{3}{2^\dagger}\edge{dd}\edge{ddr} & \lattr{2}{2^*}\edge{dd} \\ 
			& & & & \\
			\lattl{2^\dagger}{2}\edge{ddrr} & \lattf{2}{3}\edge{ddr} & & \lattf{3}{2}\edge{ddl} & \lattr{2}{2^\dagger}\edge{ddll} \\ 
			& & & & \\
			& & \lattf{2}{2} & & \\
	}}
	
	\caption{Polynomial clones containing $\Clo(\langle\mathbb{Z}_{pq}, +\rangle)$: Each clone $C$ is represented by its labelled congruence lattice. Simple factors are labelled 2 if they are abelian and 3 otherwise. A minimal factor which is labelled $2^\dagger$ is central; if it is labelled 2*, it is not central. (Picture and text realized by P. Mayr in \cite{AM.PCOG})}.
\end{figure}

Our goal is to describe also those clones that do not necessary contain constants. Indeed, \cite[Theorem $1.1$]{AM.PCOG} holds only for polynomial clones, and does not hold for clones that do not contain all the constants.

Let us now show some basic facts about finitary functions from $\ZZ_{pq}$ to $\ZZ_{pq}$ starting from the definition of affine map. 

\begin{Def}
	\label{DefAffin}	
	Let $\mathbf{G}$ be a group. Then an \emph{affine map} of $\mathbf{G}$ is an $n$-ary function $f$ which satisfies:
	
	\begin{center}
		
		for all $\mathbfsl{x}, \mathbfsl{y}, \mathbfsl{z} \in G^n$: \ $f(\mathbfsl{x} - \mathbfsl{y} + \mathbfsl{z}) = f(\mathbfsl{x}) - f(\mathbfsl{y}) + f(\mathbfsl{z})$.
		
	\end{center}
	
\end{Def}

\begin{Def}
	\label{DefAffin1nd2com}
	Let $n \in \NN$ and let $f:\ZZ_{p}^n \times \ZZ_{q}^n \rightarrow \ZZ_{p} \times \ZZ_{q}$ be a function. Then $f$ is an \emph{affine map in the first (second) component} if satisfies respectively:
	
	\begin{center}
		
		for all $\mathbfsl{x}_1, \mathbfsl{x}_2, \mathbfsl{x}_3 \in \ZZ_p^n$ and $\mathbfsl{y} \in \ZZ_q$: \\ $f(\mathbfsl{x}_1 - \mathbfsl{x}_2 + \mathbfsl{x}_3, \mathbfsl{y}) = f(\mathbfsl{x}_1, \mathbfsl{y}) - f(\mathbfsl{x}_2, \mathbfsl{y}) + f(\mathbfsl{x}_3, \mathbfsl{y})$
		
	\end{center}	
	and
	
	\begin{center}
		
		for all $\mathbfsl{x} \in \ZZ_q$ and $\mathbfsl{y}_1, \mathbfsl{y}_2, \mathbfsl{y}_3 \in \ZZ_p^n$: \\ $f(\mathbfsl{x}, \mathbfsl{y}_1 - \mathbfsl{y}_2 + \mathbfsl{y}_3) = f(\mathbfsl{x}, \mathbfsl{y}_1) - f(\mathbfsl{x}, \mathbfsl{y}_2) + f(\mathbfsl{x}, \mathbfsl{y}_3)$
		
	\end{center}
	for the second component.
	
\end{Def}

\begin{Rem}\label{RemPolCom}
	\theoremstyle{definition}
	It is a well-known fact that every finite field is polynomially complete. Thus for all $f:\FF_p^n \rightarrow \FF_p$, there exists a sequence $\{a_{\mathbfsl{m}}\}_{\mathbfsl{m} \in [p-1]_0^n} \subseteq \FF_p^n$ such that for all $\mathbfsl{x} \in \FF_p^n$, $f$ satisfies:
	\begin{equation}
	f(\mathbfsl{x}) = \sum_{\mathbfsl{m} \in [p-1]_0^n}a_{\mathbfsl{m}}\mathbfsl{x}^{\mathbfsl{m}}.
	\end{equation}
\end{Rem}

In the following remark we see how the affine maps from $\ZZ_p^n$ to $\ZZ_p$ are characterized.

\begin{Rem}\label{RemAffFun}
	\theoremstyle{definition}
	Let $f: \ZZ_p^n \rightarrow \ZZ_p$ be an $n$-ary function. Then $f$ is affine if and only if there exist $\mathbfsl{b} \in \ZZ_p^n$ and $c \in \ZZ_p$ such that for all $\mathbfsl{x} \in \ZZ_p^n$
	\begin{equation}
	f(\mathbfsl{x}) = \langle\mathbfsl{b}, \mathbfsl{x} \rangle + c.
	\end{equation}
	This holds because $f$ is affine if and only if for all $\mathbfsl{x}, \mathbfsl{y} \in \ZZ_p^n$, $f(\mathbfsl{x} + \mathbfsl{y}) = f(\mathbfsl{x}) + f(\mathbfsl{y}) + f(\mathbf{0}_n)$ if and only if $f = f' + c$, where $f'$ is a homomorphism from $\ZZ_p^n$ to $\ZZ_p$ and $c$ is a constant. Clearly, $f'$ is a homomorphism from $\ZZ_p^n$ to $\ZZ_p$ if and only if there exists $\mathbfsl{b} \in \ZZ_p^n$ such that for all $\mathbfsl{x} \in \ZZ_p^n$
	\begin{equation}
	f'(\mathbfsl{x}) = \langle\mathbfsl{b}, \mathbfsl{x} \rangle,
	\end{equation}
	since every homomorphism from $\ZZ_p$ to $\ZZ_p$ is a linear mapping. 
	
	Let $p$ and $q$ be distinct prime numbers. Let $f: \ZZ_p^n \rightarrow \ZZ_q$ be an $n$-ary function. With the same argument we can see that $f$ is affine if and only if $f$ is constant, since the only homomorphism from $\ZZ_p$ to $\ZZ_q$ is constant.
\end{Rem}

Next we introduce the relation $\rho(\alpha,\beta,\delta,m)$ following \cite[Definition $2.2$]{AM.PCOG}. 
\begin{Def}
	\label{DefRho}
	Let $\mathbf{A}$ be an algebra, $m : A^3 \rightarrow A$, and $\alpha, \beta, \gamma \in \Con(\mathbf{A})$. Then we define the $4$-ary relation $\rho(\alpha, \beta, \gamma, m)$ by:
	\begin{center}
		$\rho(\alpha, \beta, \gamma, m) := \{(a, b, c, d) \in A^4 \mid a\ \alpha\ b, b\ \beta\ c, m(a, b, c)\ \gamma\ d\}$.
	\end{center}
\end{Def}

 A Mal'cev polynomial of $A$ is a Mal'cev operation that lies in $\Pol_3(A)$.  Let $\A$ be an algebra  with a Mal'cev term $m$ and let $f \in \Clo(\A)$. In \cite[Proposition $2.3$]{AM.PCOG} and in many other sources we can find that for all $\alpha, \beta, \gamma \in \Con(\mathbf{A})$ we have that $[\alpha, \beta] \leq \gamma$ if and only if $\alpha$ centralizes $\beta$ modulo $\gamma$.

Let $f:A^n \rightarrow A$ be an $n$-ary function. We say that $f$ \emph{preserves} $R \in A^m$ if $(f((\mathbfsl{a}_1)_1,\dots,(\mathbfsl{a}_n)_1),\dots,f((\mathbfsl{a}_1)_m,\dots,(\mathbfsl{a}_n)_m)) \in R$ for all $\mathbfsl{a}_1,\dots,\mathbfsl{a}_n \in R$. Now we present a result in \cite[Lemma $2.4$]{AM.PCOG}, that allow us to connect the concepts of the relation $\rho$ and the centralizer relation.

\begin{Lem}
	\label{PropRho}
	Let $\mathbf{A}$ be an algebra in a congruence permutable variety, let $m$ be a Mal’cev polynomial on $\mathbf{A}$, and $\alpha, \beta, \gamma \in \Con(\mathbf{A})$. Then the following are equivalent:
	\begin{enumerate}
		\item[(1)] every $f \in \Pol(\mathbf{A})$ preserves $\rho(\alpha, \beta, \gamma, m)$.
		\item[(2)] $\alpha$ centralizes $\beta$ modulo $\gamma$.
	\end{enumerate}
	
\end{Lem}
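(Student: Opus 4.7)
The plan is to read off, from the defining conditions of $\rho(\alpha,\beta,\gamma,m)$, a ``Mal'cev form'' of the term condition, and then identify it with the centralizer relation via the standard commutator theory for congruence permutable varieties.

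For the direction (1) $\Rightarrow$ (2): The key observation is that for every pair $(a,b) \in \alpha$ the quadruple $(a,b,b,a)$ lies in $\rho(\alpha,\beta,\gamma,m)$ (since $m(a,b,b)=a$), and for every pair $(u,v) \in \beta$ the quadruple $(u,u,v,v)$ also lies in $\rho$ (since $m(u,u,v)=v$). Hence for any polynomial $t \in \Pol(\A)$ of arity $n+k$ and any tuples $\mathbfsl{a}\ \alpha\ \mathbfsl{b}$ in $A^n$, $\mathbfsl{u}\ \beta\ \mathbfsl{v}$ in $A^k$, applying $t$ componentwise to the $n+k$ corresponding quadruples yields, by assumption, another element of $\rho$. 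Its first two entries are automatically $\alpha$- and $\beta$-related since $\alpha,\beta$ are congruences, so the essential information is the third defining condition of $\rho$, namely
\begin{equation}
m(t(\mathbfsl{a},\mathbfsl{u}),\,t(\mathbfsl{b},\mathbfsl{u}),\,t(\mathbfsl{b},\mathbfsl{v}))\ \gamma\ t(\mathbfsl{a},\mathbfsl{v}).
\end{equation}
To recover the classical term condition from this ``Mal'cev form'', I would run the same derivation with $\mathbfsl{a}$ and $\mathbfsl{b}$ swapped and then, assuming $t(\mathbfsl{a},\mathbfsl{u})\ \gamma\ t(\mathbfsl{a},\mathbfsl{v})$, collapse the outer $m$ using $m(x,x,y)=y$ and the compatibility of $m$ with $\gamma$; this yields $t(\mathbfsl{b},\mathbfsl{u})\ \gamma\ t(\mathbfsl{b},\mathbfsl{v})$, i.e.\ (2).

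For the direction (2) $\Rightarrow$ (1): Let $f \in \Pol(\A)$ be $n$-ary and let $(a_i,b_i,c_i,d_i) \in \rho(\alpha,\beta,\gamma,m)$ for $i=1,\dots,n$. The conditions $f(\mathbfsl{a})\ \alpha\ f(\mathbfsl{b})$ and $f(\mathbfsl{b})\ \beta\ f(\mathbfsl{c})$ are immediate from compatibility of $f$ with $\alpha,\beta$. Since $f$ is also compatible with $\gamma$ and each $d_i\ \gamma\ m(a_i,b_i,c_i)$, it suffices to prove
\begin{equation}
m(f(\mathbfsl{a}),f(\mathbfsl{b}),f(\mathbfsl{c}))\ \gamma\ f(m(a_1,b_1,c_1),\dots,m(a_n,b_n,c_n)).
\end{equation}
I would apply the Mal'cev form of (2) to the $2n$-ary polynomial $q(\mathbfsl{x},\mathbfsl{y}) := f(m(x_1,b_1,y_1),\dots,m(x_n,b_n,y_n))$, specialised with $\mathbfsl{a}\ \alpha\ \mathbfsl{b}$ in the first block and $\mathbfsl{b}\ \beta\ \mathbfsl{c}$ in the second. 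Using $m(a,b,b)=a$ and $m(b,b,c)=c$ one checks that $q(\mathbfsl{a},\mathbfsl{b})=f(\mathbfsl{a})$, $q(\mathbfsl{b},\mathbfsl{b})=f(\mathbfsl{b})$, $q(\mathbfsl{b},\mathbfsl{c})=f(\mathbfsl{c})$ and $q(\mathbfsl{a},\mathbfsl{c})=f(m(a_1,b_1,c_1),\dots,m(a_n,b_n,c_n))$, so the Mal'cev form delivers exactly the required congruence.

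The main conceptual ingredient hidden in both directions is the equivalence between the centralizer statement $[\alpha,\beta]\le\gamma$ and its ``Mal'cev form'' in a congruence permutable variety, a classical fact from commutator theory (e.g.\ Freese--McKenzie). Granting it, both implications reduce to short computations once one chooses, on the one hand, the witness quadruples $(a,b,b,a)$ and $(u,u,v,v)$ to feed into $\rho$, and on the other hand, the auxiliary polynomial $q$ above.
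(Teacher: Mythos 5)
The paper does not supply its own argument for this lemma; it is quoted verbatim from \cite[Lemma 2.4]{AM.PCOG}, so there is no in-text proof to compare against. That said, your reconstruction is correct and is essentially the natural proof of this statement.

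Your $(1)\Rightarrow(2)$ direction is fully self-contained and correct: the choice of witness quadruples $(a,b,b,a)$ and $(u,u,v,v)$ yields the ``Mal'cev form'' $m\bigl(t(\mathbfsl{a},\mathbfsl{u}),t(\mathbfsl{b},\mathbfsl{u}),t(\mathbfsl{b},\mathbfsl{v})\bigr)\ \gamma\ t(\mathbfsl{a},\mathbfsl{v})$, and the swap-and-collapse step (using the $\mathbfsl{a}\leftrightarrow\mathbfsl{b}$ instance plus compatibility of $m$ with $\gamma$) does recover the classical term condition, which is the definition of ``$\alpha$ centralizes $\beta$ modulo $\gamma$.''

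Your $(2)\Rightarrow(1)$ direction is also correct in its structure, and the auxiliary polynomial $q(\mathbfsl{x},\mathbfsl{y})=f\bigl(m(x_1,b_1,y_1),\dots,m(x_n,b_n,y_n)\bigr)$ is exactly the right device, with the four specialisations computed correctly. The one place you lean on a citation rather than an argument is the step ``term condition $\Rightarrow$ Mal'cev form,'' which you defer to Freese--McKenzie. That step is standard but not free; it would make the proof self-contained to note that it follows by applying the (symmetric form of the) term condition to $s(\mathbfsl{x},\mathbfsl{y}):=m\bigl(t(\mathbfsl{x},\mathbfsl{y}),t(\mathbfsl{b},\mathbfsl{y}),t(\mathbfsl{b},\mathbfsl{v})\bigr)$: one has $s(\mathbfsl{b},\mathbfsl{u})=t(\mathbfsl{b},\mathbfsl{v})=s(\mathbfsl{b},\mathbfsl{v})$, hence $s(\mathbfsl{a},\mathbfsl{u})\ \gamma\ s(\mathbfsl{a},\mathbfsl{v})=t(\mathbfsl{a},\mathbfsl{v})$, which is precisely the Mal'cev form. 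With that one-line supplement the proof is complete and elementary.
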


Let $\mathbf{A}$ be an algebra in a congruence permutable variety, let $m$ be a Mal’cev polynomial on $\mathbf{A}$, and let $\alpha, \beta, \gamma \in \Con(\mathbf{A})$. Let $f:A^n \rightarrow A$ be an $n$-ary function. We say that $f$ \emph{preserves} $[\alpha,\beta] \leq \gamma$ if $f$ preserves $\rho(\alpha, \beta, \gamma, m)$.

From now on we will fix the algebra, $\ZZ_{pq}$, and the Mal'cev term $x-y+z$. Hence, for the sake of simplicity, we will consider $\rho$ as a ternary relation fixing the Mal'cev term of $\ZZ_{pq}$ as $x-y+z$.

With the following Lemma we show some properties of functions on $\ZZ_{pq}$ that preserve certain commutator relations.  

\begin{Lem}
	\label{Lem1-3}
	
	Let $p$ and $q$ be different prime numbers. Then for all $n \in \NN$ and $f: \ZZ_p^n \times \ZZ_q^n \rightarrow \ZZ_p \times \ZZ_q$ the following hold:
	
	\begin{enumerate}
		
		\item [(1)] $f$ preserves $\pi_1$  if and only if  there exist $f_p: \ZZ^n_p \rightarrow \ZZ_p$ and $f_q: \ZZ^n_p \times \ZZ_q^n \rightarrow \ZZ_q$ such that $f$ satisfies $f(\mathbfsl{x},\mathbfsl{y}) = (f_p(\mathbfsl{x}),f_q(\mathbfsl{x},\mathbfsl{y}))$ for all $(\mathbfsl{x},\mathbfsl{y}) \in \ZZ_p^n \times \ZZ_q^n$;
		
		\item [(2)] $f$ preserves $\pi_2$  if and only if there exist $f_p: \ZZ^n_p\times \ZZ_q^n \rightarrow \ZZ_p$ and  $f_q: \ZZ_q^n \rightarrow \ZZ_q$ such that $f$ satisfies $f(\mathbfsl{x},\mathbfsl{y}) = (f_p(\mathbfsl{x},\mathbfsl{y}),f_q(\mathbfsl{y}))$ for all $(\mathbfsl{x},\mathbfsl{y}) \in \ZZ_p^n \times \ZZ_q^n$;
		
		\item [(3)] suppose that $f$ preserves $\pi_1$. Then $f$ is affine in the second component if and only if $f$ preserves $[\pi_1,\pi_1] = 0$;
		
		\item [(4)]suppose that $f$ preserves $\pi_2$. Then $f$ is affine in the first component if and only if $f$ preserves $[\pi_2,\pi_2] = 0$; 
		
		\item [(5)] suppose that $f$ preserves $\pi_2$. Then $f_q|\{\mathbf{0}_n\} \times \ZZ_q^n$ is an affine function from $\ZZ_q^n$ to $\ZZ_q$ if and only if $f$ preserves $[1,1] \leq \pi_2$;
		
		\item [(6)] suppose that $f$ preserves $\pi_1$. Then $f_p |\ZZ_p^n \times \{\mathbf{0}_n\}$ is an affine function from $\ZZ_p^n$ to $\ZZ_p$ if and only if $f$ preserves $[1,1] \leq \pi_1$.
		
	\end{enumerate}
	
\end{Lem}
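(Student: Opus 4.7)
The plan is to unpack each preservation condition into an explicit coordinate-wise identity and match it against the stated structural property; all six parts then follow by direct computation.

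For (1) and (2): the $\pi_1$-classes of $\ZZ_p\times\ZZ_q$ are $\{a\}\times\ZZ_q$ and the $\pi_2$-classes are $\ZZ_p\times\{b\}$. Applying the coordinate-wise definition of $\pi_1$-preservation to the pair of input tuples $(\mathbfsl{x},\mathbfsl{y})$ and $(\mathbfsl{x},\mathbfsl{y}')$, which share $\ZZ_p$-coordinates entry by entry, forces $f_p(\mathbfsl{x},\mathbfsl{y})=f_p(\mathbfsl{x},\mathbfsl{y}')$ for every $\mathbfsl{y},\mathbfsl{y}'$, so $f_p$ depends only on $\mathbfsl{x}$; the reverse implication is immediate from the explicit form $(f_p(\mathbfsl{x}),f_q(\mathbfsl{x},\mathbfsl{y}))$. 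Part (2) is the mirror statement.

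For (3) and (4): I first make the relation $\rho(\pi_1,\pi_1,0)$ explicit. Since two $\pi_1$-equivalent elements share their $\ZZ_p$-coordinate and the third slot is equality, the relation consists exactly of the $4$-tuples
\[
((x,u_1),(x,u_2),(x,u_3),(x,u_1-u_2+u_3))\quad\text{with } x\in\ZZ_p,\ u_1,u_2,u_3\in\ZZ_q.
\]
Preservation of this relation by an $n$-ary $f$ therefore asserts that, for every $\mathbfsl{x}\in\ZZ_p^n$ and every $\mathbfsl{y}_1,\mathbfsl{y}_2,\mathbfsl{y}_3\in\ZZ_q^n$, the four images $f(\mathbfsl{x},\mathbfsl{y}_1)$, $f(\mathbfsl{x},\mathbfsl{y}_2)$, $f(\mathbfsl{x},\mathbfsl{y}_3)$, and $f(\mathbfsl{x},\mathbfsl{y}_1-\mathbfsl{y}_2+\mathbfsl{y}_3)$ share their $\ZZ_p$-components and satisfy the Mal'cev equation in their $\ZZ_q$-components. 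Under the hypothesis that $f$ preserves $\pi_1$, part (1) already makes the $\ZZ_p$-clause automatic (it reduces to $f_p(\mathbfsl{x})=f_p(\mathbfsl{x})$), and the remaining condition is
\[
f_q(\mathbfsl{x},\mathbfsl{y}_1-\mathbfsl{y}_2+\mathbfsl{y}_3)=f_q(\mathbfsl{x},\mathbfsl{y}_1)-f_q(\mathbfsl{x},\mathbfsl{y}_2)+f_q(\mathbfsl{x},\mathbfsl{y}_3),
\]
which is exactly the statement that $f$ is affine in the second component. Part (4) is the mirror image with $\pi_1$ and $\pi_2$ swapped.

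For (5) and (6): since $1$ is the total relation, the first two clauses of $\rho(1,1,\pi_2)$ are vacuous, so the relation reduces to the $4$-tuples whose $\ZZ_q$-components satisfy the Mal'cev equation, with $\ZZ_p$-components arbitrary. Under the assumption that $f$ preserves $\pi_2$, part (2) lets me regard the $\ZZ_q$-component of $f$ as a function $f_q$ of $\mathbfsl{y}$ alone, so preservation of $\rho(1,1,\pi_2)$ reduces to $f_q(\mathbfsl{y}_1-\mathbfsl{y}_2+\mathbfsl{y}_3)=f_q(\mathbfsl{y}_1)-f_q(\mathbfsl{y}_2)+f_q(\mathbfsl{y}_3)$ for all $\mathbfsl{y}_j\in\ZZ_q^n$, i.e., affineness of $f_q$; since $f_q$ does not depend on $\mathbfsl{x}$, this is the same as the restriction $f_q|\{\mathbf{0}_n\}\times\ZZ_q^n$ being affine. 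Part (6) is symmetric. The arguments are essentially definitional, so no deep obstacle is expected; the only step that demands attention is the bookkeeping between the two views of the domain (as $(\ZZ_p\times\ZZ_q)^n$ when reading off preservation of a $4$-ary relation and as $\ZZ_p^n\times\ZZ_q^n$ when writing the componentwise form of $f$), together with the observation that, once the congruence-preservation hypothesis has eliminated one of the two clauses in $\rho$, what remains is precisely the Mal'cev identity for the other component.
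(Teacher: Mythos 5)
Your proof is correct and follows essentially the same route as the paper: unpack the relation $\rho$ from Definition \ref{DefRho} into an explicit condition on the four images, observe that the $\pi_1$ (resp.\ $\pi_2$, resp.\ trivial) clauses are handled by the standing preservation hypothesis and parts (1)–(2), and recognize the residual Mal'cev identity as affineness. You make a couple of steps slightly more explicit than the paper (the exact shape of the relation $\rho(\pi_1,\pi_1,0)$, and the fact that the hypothesis of preserving $\pi_1$ or $\pi_2$ is what makes the first two clauses of $\rho$ automatic), but the argument is the same.
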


\begin{proof}
	
	Let us prove $(1)$. Let $n \in \NN$ and let $f: \ZZ_p^n \times \ZZ_q^n \rightarrow \ZZ_p \times \ZZ_q$. Then $f$ preserves $\pi_1$ if and only if for all $((\mathbfsl{x}_1,\mathbfsl{y}_1), (\mathbfsl{x}_2, \mathbfsl{y}_2)) \in (\ZZ_p \times \ZZ_q)^2$ whenever $\mathbfsl{x}_1 = \mathbfsl{x}_2$ then $\pi_p^{\ZZ_{p}\times \ZZ_q} \circ f(\mathbfsl{x}_1, \mathbfsl{y}_1) = \pi_p^{\ZZ_{p}\times \ZZ_q}\circ f(\mathbfsl{x}_2, \mathbfsl{y}_2)$, where $\pi_p^{\ZZ_{p}\times \ZZ_q} : \ZZ_p \times \ZZ_q \rightarrow \ZZ_p$ is the projection over $\ZZ_p$. This holds if and only if $f_p = \pi_p^{\ZZ_{p}\times \ZZ_q}\circ f$ depends on only the variables from $\ZZ_p$. Thus $(1)$ holds. The proof of item $(2)$ is symmetric to the one of item $(1)$.
	
	Next we prove $(3)$. Let $n \in \NN$ and let $f: \ZZ_p^n \times \ZZ_q^n \rightarrow \ZZ_p \times \ZZ_q$. By definition $f$ preserves $[\pi_1, \pi_1] = 0$ if and only if $f$ preserves $\rho(\pi_1, \pi_1, 0)$. Thus, by Definition \ref{DefRho}, $f$ preserves $\rho(\pi_1, \pi_1, 0)$ if and only if for all $\mathbfsl{x}_1, \mathbfsl{x}_2, \mathbfsl{x}_3, \mathbfsl{x}_4 \in \ZZ_p^n$ and $\mathbfsl{y}_1, \mathbfsl{y}_2, \mathbfsl{y}_3, \mathbfsl{y}_4 \in \ZZ_q^n$, $\mathbfsl{x}_1 = \mathbfsl{x}_2 = \mathbfsl{x}_3 = \mathbfsl{x}_4$ and $\mathbfsl{y}_1 - \mathbfsl{y}_2 + \mathbfsl{y}_3 = \mathbfsl{y}_4$ then:

	\begin{equation} \label{eq:4}
	f(\mathbfsl{x}_1, \mathbfsl{y}_1 - \mathbfsl{y}_2 + \mathbfsl{y}_3) = f(\mathbfsl{x}_1, \mathbfsl{y}_1) - f(\mathbfsl{x}_1, \mathbfsl{y}_2) + f(\mathbfsl{x}_1, \mathbfsl{y}_3).
	\end{equation}	
	Clearly, (\ref{eq:4}) holds if and only if $f$ is affine in the second component (Definition \ref{DefAffin1nd2com}). The proof of item $(4)$ is symmetric to the one of item $(3)$.
	
	Let us prove $(5)$. Let $n \in \NN$ and let $f: \ZZ_p^n \times \ZZ_q^n \rightarrow \ZZ_p \times \ZZ_q$. By definition $f$ preserves $[1, 1] \leq \pi_2$ if and only if $f$ preserves $\rho(1,1,\pi_2)$. Thus, by Definition \ref{DefRho}, $f$ preserves $\rho(1, 1, \pi_2)$ if and only if for all  $\mathbfsl{x}_1, \mathbfsl{x}_2, \mathbfsl{x}_3, \mathbfsl{x}_4 \in \ZZ_p^n$ and $\mathbfsl{y}_1, \mathbfsl{y}_2, \mathbfsl{y}_3, \mathbfsl{y}_4 \in \ZZ_q^n$ if $\mathbfsl{y}_1 - \mathbfsl{y}_2 + \mathbfsl{y}_3 = \mathbfsl{y}_4$ then 
	
	\begin{equation} \label{eq:5}
	f(\mathbfsl{x}_4, \mathbfsl{y}_1 - \mathbfsl{y}_2 + \mathbfsl{y}_3)\ \pi_2\ f(\mathbfsl{x}_1, \mathbfsl{y}_1) - f(\mathbfsl{x}_2, \mathbfsl{y}_2) + f(\mathbfsl{x}_3, \mathbfsl{y}_3).
	\end{equation}	
	By hypothesis $f$ preserves $\pi_2$ and thus, by item $(2)$, (\ref{eq:5}) holds if and only if $f_q = \pi_q^{\ZZ_p\times \ZZ_q} \circ f$ satisfies:	
	\begin{equation} \label{eq:6}
	f_q(\mathbfsl{y}_1 - \mathbfsl{y}_2 + \mathbfsl{y}_3) = f_q(\mathbfsl{y}_1) - f_q(\mathbfsl{y}_2) + f_q(\mathbfsl{y}_3),
	\end{equation}	
	for all $\mathbfsl{y}_1, \mathbfsl{y}_2, \mathbfsl{y}_3 \in \ZZ_q^n$. Hence this holds if and only if $f_q|\{\mathbf{0}_n\} \times \ZZ_q^n$ is an affine function. The proof of item $(6)$ is symmetric to the one of item $(5)$.

\end{proof}

In the case $p_1,\dots,p_m$ are distinct prime numbers we can see that we can split a function $f: \prod_{i=1}^m\ZZ_{p_i}^n \rightarrow \prod_{i=1}^m\ZZ_{p_i}$ in $f = \sum_{i=1}^m f_i$, where $f_i = \prod_{j\in [m]\backslash \{i\}}p_j^{p_i-1}f$. This implies, for example, that we can prove the following remark.

\begin{Rem}\label{RemLinComb}
	
	Let $p_1 \cdots p_m =s$ be a product of distinct prime numbers and let $C$ be a clone containing $\Clo(\langle \ZZ_{s}, +\rangle)$. Then for all $k \in \NN$ and $(\mathbfsl{a}_1,\dots,\mathbfsl{a}_m) \in \prod_{i=1}\ZZ_{p_i}^k$, $h_{(\mathbfsl{a}_1,\dots,\mathbfsl{a}_m)}:  \prod_{i=1}^m\ZZ_{p_i}^k \rightarrow  \prod_{i=1}^m\ZZ_{p_i}$ defined by $h_{(\mathbfsl{a}_1,\dots,\mathbfsl{a}_m)}: (\mathbfsl{x}_1,\dots,\mathbfsl{x}_m) \mapsto (\langle\mathbfsl{a}_1,\mathbfsl{x}_1\rangle,\dots, \langle$ $\mathbfsl{a}_m,\mathbfsl{x}_m\rangle)$ is in $C$.
	
\end{Rem}

\begin{proof}
	Let $p_1 \cdots p_m =s$ be a product of distinct prime numbers and let $C$ be a clone containing $\Clo(\langle \ZZ_{s}, +\rangle)$. Let $h_{p_i}: \prod_{i=1}\ZZ_{p_i} \rightarrow \prod_{i=1}\ZZ_{p_i}$ be such that $h_{p_i}:(x_1,\dots,x_m) \mapsto (0_{\ZZ_{p_1}}, \dots, \dots,0_{\ZZ_{p_{i-1}}},x_i ,0_{\ZZ_{p_{i+1}}},\dots,0_{\ZZ_{p_{m}}})$. Then $h = (\prod_{j\in [m]\backslash \{i\}}p_j)^{p_i-1}\pi_1^1$, where $\pi_1^1$ is the unary projection of $\Clo(\langle \ZZ_{pq}, +\rangle)$. Thus $\pi_1^1$ and the sum generate the function $h_{p_i}$ for all $i \in [m]$. Clearly, with $\{h_{p_i}\}_{i \in [m]}$, the sum, and the projections we can generate every other linear combination of the variables in the $m$ components.
\end{proof}

Let $A$ be a set and let $\FF_p$ be a field of order $p$. With the following lemma we show that every function from $\FF_p^n \times A^s$ to $\FF_p$ can be seen as the induced function of a polynomial of $\mathbf{R}[X]$, where $\mathbf{R} = \FF_p^{A^s}$. This easy fact will be often used later.

\begin{Lem}\label{Lem2Genexprofa}
	Let $A$ be a set and let $\FF_p$ be a field of order $p$. Then for every function $f$ from $\FF_p^n \times A^s$ to $\FF_p$ there exists a sequence of functions $\{f_{\mathbfsl{m}}\}_{\mathbfsl{m} \in [p-1]_0^n}$ from $A^s$ to $\FF_p$ such that $f$ satisfies for all $\mathbfsl{x} \in \FF_p^n$, $\mathbfsl{y} \in A^s$:
	
	\begin{equation}
	\label{GeneralExproffa}
	f(\mathbfsl{x}, \mathbfsl{y}) = \sum_{\mathbfsl{m} \in [p-1]_0^n} f_{\mathbfsl{m}}(\mathbfsl{y})\mathbfsl{x}^{\mathbfsl{m}}.
	\end{equation}

\end{Lem}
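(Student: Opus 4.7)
The plan is to apply Remark~\ref{RemPolCom} parametrically in $\mathbfsl{y}$. For each fixed $\mathbfsl{y} \in A^s$, the partial function $f^{\mathbfsl{y}}\colon \FF_p^n \to \FF_p$ defined by $f^{\mathbfsl{y}}(\mathbfsl{x}) := f(\mathbfsl{x},\mathbfsl{y})$ is an $n$-ary function on the finite field $\FF_p$, so by Remark~\ref{RemPolCom} there exists a sequence $\{a_{\mathbfsl{m}}(\mathbfsl{y})\}_{\mathbfsl{m} \in [p-1]_0^n} \subseteq \FF_p$ satisfying
\begin{equation*}
f(\mathbfsl{x},\mathbfsl{y}) \;=\; \sum_{\mathbfsl{m} \in [p-1]_0^n} a_{\mathbfsl{m}}(\mathbfsl{y})\, \mathbfsl{x}^{\mathbfsl{m}} \qquad \text{for all } \mathbfsl{x} \in \FF_p^n.
\end{equation*}
Setting $f_{\mathbfsl{m}}(\mathbfsl{y}) := a_{\mathbfsl{m}}(\mathbfsl{y})$ will then yield the desired representation (\ref{GeneralExproffa}). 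The only thing that needs to be checked is that this yields well-defined functions $f_{\mathbfsl{m}}\colon A^s \to \FF_p$, i.e.\ that the coefficients $a_{\mathbfsl{m}}(\mathbfsl{y})$ are uniquely determined by $f^{\mathbfsl{y}}$.

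To settle uniqueness I would consider the evaluation map
\begin{equation*}
\Phi \colon \FF_p^{[p-1]_0^n} \longrightarrow \FF_p^{\FF_p^n}, \qquad \{a_{\mathbfsl{m}}\}_{\mathbfsl{m}} \longmapsto \Bigl( \mathbfsl{x} \mapsto \sum_{\mathbfsl{m} \in [p-1]_0^n} a_{\mathbfsl{m}}\, \mathbfsl{x}^{\mathbfsl{m}} \Bigr).
\end{equation*}
This is an $\FF_p$-linear map between two $\FF_p$-vector spaces of the same finite dimension $p^n$, and Remark~\ref{RemPolCom} asserts precisely that $\Phi$ is surjective. Hence $\Phi$ is a bijection, so the preimage of $f^{\mathbfsl{y}}$ is the singleton $\{a_{\mathbfsl{m}}(\mathbfsl{y})\}_{\mathbfsl{m}}$, and the assignment $\mathbfsl{y} \mapsto a_{\mathbfsl{m}}(\mathbfsl{y})$ defines an honest function $f_{\mathbfsl{m}}\colon A^s \to \FF_p$ for every $\mathbfsl{m} \in [p-1]_0^n$, with no appeal to choice. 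Substituting these functions back into the pointwise identity gives (\ref{GeneralExproffa}). There is no real obstacle: the entire argument amounts to extracting the $\mathbfsl{y}$-dependence from a polynomial representation that is guaranteed to exist and to be unique by a dimension count.
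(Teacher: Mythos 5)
Your proof is correct and follows essentially the same route as the paper: fix $\mathbfsl{y}\in A^s$, apply Remark~\ref{RemPolCom} to the section $\mathbfsl{x}\mapsto f(\mathbfsl{x},\mathbfsl{y})$ to obtain coefficients, and then read the $\mathbfsl{m}$-th coefficient off as the value $f_{\mathbfsl{m}}(\mathbfsl{y})$. The paper packages this last step with indicator functions $\chi_{\mathbfsl{a}}$ (defining $f_{\mathbfsl{m}}=\sum_{\mathbfsl{a}\in A^s}c_{(\mathbfsl{m},\mathbfsl{a})}\chi_{\mathbfsl{a}}$), which is just notation for the same pointwise assignment; your added dimension-count argument that the evaluation map $\Phi$ is a bijection makes the well-definedness explicit and choice-free, a small refinement rather than a different method.
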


\begin{proof}
	Let $f$ be a function from $\FF_p^n \times A^s$ to $\FF_p$ and let us fix the variables over $A^s$ as a vector $\mathbfsl{a} \in A^s$ and let $f_{\mathbfsl{a}}: \FF_p^n \rightarrow \FF_p$ be defined by $f_{\mathbfsl{a}}: \mathbfsl{x} \mapsto f(\mathbfsl{x}, \mathbfsl{a})$. By Remark \ref{RemPolCom}, we have that for every $\mathbfsl{a} \in A^s$ there exists $\{c_{(\mathbfsl{m},\mathbfsl{a})}\}_{\mathbfsl{m} \in [p-1]_0^n}$ such that for all $\mathbfsl{x} \in \FF_p^n$ $f$ satisfies:
	
	\begin{equation}
	f(\mathbfsl{x}, \mathbfsl{a}) = f_{\mathbfsl{a}}(\mathbfsl{x}) = \sum_{\mathbfsl{m} \in [p-1]_0^n}c_{(\mathbfsl{m},\mathbfsl{a})}\mathbfsl{x}^{\mathbfsl{m}}.
	\end{equation}
	Thus we can see that for all $\mathbfsl{x} \in \ZZ_p^n$, $\mathbfsl{y} \in \ZZ_p^n$: 
	
	\begin{equation}
	f(\mathbfsl{x}, \mathbfsl{y}) = \sum_{\mathbfsl{a} \in A^s} \chi_{\mathbfsl{a}}(\mathbfsl{y}) \sum_{\mathbfsl{m} \in [p-1]_0^n}c_{(\mathbfsl{m},\mathbfsl{a})}\mathbfsl{x}^{\mathbfsl{m}},
	\end{equation}
	where $\chi_{\mathbfsl{a}}(\mathbfsl{a}) = 1$ and $\chi_{\mathbfsl{a}}(\mathbfsl{y}) = 0$, for all $\mathbfsl{y} \in A^s\backslash\{\mathbfsl{a}\}$. Thus for all $\mathbfsl{m} \in [p-1]_0^n$ we define $f_{\mathbfsl{m}}: A^s \rightarrow \FF_p$ by:

	\begin{equation}
	f_{\mathbfsl{m}} := \sum_{\mathbfsl{a} \in A^s} c_{(\mathbfsl{m}, \mathbfsl{a})}\chi_{\mathbfsl{a}}.
	\end{equation}
	Thus the claim holds.

\end{proof}

The previous lemma in our setting implies the following.

\begin{Lem}\label{Lem2Genexprof}
	Let $p$ and $q$ be prime numbers. Then for every function $f$ from $\ZZ_p^n \times \ZZ_q^n$ to $\ZZ_p \times \ZZ_q$ there exist two sequences of functions $\{f_{\mathbfsl{m}}\}_{\mathbfsl{m} \in [p-1]_0^n}$ from $\ZZ_q^n$ to $\ZZ_p$  and $\{f_{\mathbfsl{h}}\}_{\mathbfsl{h} \in [q-1]_0^n}$ from $\ZZ_p^n$ to $\ZZ_q$ such that $f$ satisfies for all $\mathbfsl{x} \in \ZZ_p^n$, $\mathbfsl{y} \in \ZZ_q^n$:
	
	\begin{equation}
	\label{GeneralExproff}
	f(\mathbfsl{x}, \mathbfsl{y}) = (\sum_{\mathbfsl{m} \in [p-1]_0^n} f_{\mathbfsl{m}}(\mathbfsl{y})\mathbfsl{x}^{\mathbfsl{m}}, \sum_{\mathbfsl{h} \in [q-1]_0^n} f_{\mathbfsl{h}}(\mathbfsl{x})\mathbfsl{y}^{\mathbfsl{h}}).
	\end{equation}

\end{Lem}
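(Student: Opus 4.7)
The plan is to reduce this immediately to Lemma \ref{Lem2Genexprofa} applied coordinatewise. First I would split the target: any function $f : \ZZ_p^n \times \ZZ_q^n \to \ZZ_p \times \ZZ_q$ decomposes as $f(\mathbfsl{x},\mathbfsl{y}) = (f_p(\mathbfsl{x},\mathbfsl{y}), f_q(\mathbfsl{x},\mathbfsl{y}))$, where $f_p = \pi_p^{\ZZ_p\times\ZZ_q} \circ f$ and $f_q = \pi_q^{\ZZ_p\times\ZZ_q} \circ f$, so it suffices to produce the two expressions independently.

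For the first coordinate, $f_p$ is a function from $\ZZ_p^n \times \ZZ_q^n$ to $\ZZ_p$, which is exactly the setting of Lemma \ref{Lem2Genexprofa} with the field $\FF_p = \ZZ_p$, the auxiliary set $A = \ZZ_q$, and $s = n$. Applying that lemma directly produces a family $\{f_{\mathbfsl{m}}\}_{\mathbfsl{m} \in [p-1]_0^n}$ of functions from $\ZZ_q^n$ to $\ZZ_p$ such that
\[
f_p(\mathbfsl{x},\mathbfsl{y}) = \sum_{\mathbfsl{m} \in [p-1]_0^n} f_{\mathbfsl{m}}(\mathbfsl{y})\,\mathbfsl{x}^{\mathbfsl{m}}
\]
for all $\mathbfsl{x} \in \ZZ_p^n$ and $\mathbfsl{y} \in \ZZ_q^n$.

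For the second coordinate I would swap the roles of the two factors: viewing $f_q$ as a function of $\mathbfsl{y} \in \ZZ_q^n$ parametrized by $\mathbfsl{x} \in \ZZ_p^n$, it is a function from $\ZZ_q^n \times \ZZ_p^n$ to $\ZZ_q$. Since $q$ is prime, $\ZZ_q$ is a field, so Lemma \ref{Lem2Genexprofa} applies again, this time with $\FF_p$ replaced by $\FF_q = \ZZ_q$, with $A = \ZZ_p$, and with $s = n$. It yields a family $\{f_{\mathbfsl{h}}\}_{\mathbfsl{h} \in [q-1]_0^n}$ of functions from $\ZZ_p^n$ to $\ZZ_q$ such that
\[
f_q(\mathbfsl{x},\mathbfsl{y}) = \sum_{\mathbfsl{h} \in [q-1]_0^n} f_{\mathbfsl{h}}(\mathbfsl{x})\,\mathbfsl{y}^{\mathbfsl{h}}.
\]

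Combining the two expressions into the pair $(f_p, f_q)$ gives the required representation of $f$. There is no genuine obstacle here: the content is entirely in Lemma \ref{Lem2Genexprofa}, and the present statement is just its symmetric application to each of the two coordinate projections of $f$, exploiting that both $\ZZ_p$ and $\ZZ_q$ are fields.
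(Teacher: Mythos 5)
Your proof is correct and matches the paper's intent exactly: the paper gives no separate argument for this lemma, merely noting that it follows from Lemma \ref{Lem2Genexprofa}, and your coordinatewise application of that lemma (with $\FF_p$, $A=\ZZ_q$, $s=n$ for the first coordinate, and with roles swapped for the second) is precisely what is meant.
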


\section{Embedding of the Clonoids}
\label{EmbClo}

The aim of this section is to prove that there exists an embedding of the lattice of all $(\mathbb{Z}_p,\ZZ_q)$-linearly closed clonoids in the lattice of all clones containing $\Clo(\langle \ZZ_{pq},+\rangle)$, when $p$ and $q$ are distinct prime numbers. 

Let $e: \bigcup_{n\in \NN}\ZZ_p^{\ZZ_q^n} \rightarrow \bigcup_{n\in \NN}(\ZZ_p \times \ZZ_q)^{\ZZ_p^n \times \ZZ_q^n}$ be such that for all $n \in \NN$ and for all $f \in \ZZ_p^{\ZZ_q^n}$, $e(f) := g$, where $g: \ZZ_p^n \times \ZZ_q^n \rightarrow \ZZ_p \times \ZZ_q$ satisfies for all $(\mathbfsl{x},\mathbfsl{y}) \in \ZZ_p^n \times \ZZ_q^n$, $g(\mathbfsl{x},\mathbfsl{y}) = (f(\mathbfsl{y}),0)$.

Furthermore, we define $\gamma$ from the lattice of all $(\mathbb{Z}_p,\ZZ_q)$-linearly closed clonoids to the lattice of all clones containing $\Clo(\langle \ZZ_{pq},+\rangle)$ such that for all $C \in \mathcal{L}(\ZZ_p,\ZZ_q)$:

\begin{equation} \label{equ:1}
\gamma(C) := \{f \mid f: \ZZ_p^n \times \ZZ_q^n \rightarrow \ZZ_p \times \ZZ_q, \exists (g \in C, \mathbfsl{a} \in \ZZ_p^n, \mathbfsl{b} \in \ZZ_q^n): f = e(g)+ h_{(\mathbfsl{a},\mathbfsl{b})}\},
\end{equation}
where $h_{(\mathbfsl{a},\mathbfsl{b})}:\ZZ_p^n \times \ZZ_q^n \rightarrow \ZZ_p \times \ZZ_q$ is defined in Remark \ref{RemLinComb}.

In order to prove Theorem \ref{ThEmbClonoids} we first show a lemma.

\begin{Lem}
	\label{Lemclogen-2}
	Let $p$ and $q$ be powers of distinct primes. Let $X \subseteq \bigcup_{n \in \NN} \mathbb{F}_p^{\mathbb{F}_q^n}$. Then $\Cid(X) = \bigcup_{n \in \NN} X_n$ where:
	\begin{flushleft}
		$X_0 = X$
		\\$X_{n+1} = \{af + bg \mid a,b \in \mathbb{F}_p, f,g \in X_n^{[r]}, r \in \NN\} \cup \{g: (x_1,\dots,x_l) \mapsto f(A\cdot (x_1,\dots,x_l)^t) \mid f \in X_n^{[k]}, A \in \mathbb{F}^{k \times l}_{q}\}$.
	\end{flushleft}
\end{Lem}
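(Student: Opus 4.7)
The plan is to show both inclusions $\bigcup_{n \in \NN} X_n \subseteq \Cid(X)$ and $\Cid(X) \subseteq \bigcup_{n \in \NN} X_n$. Write $Y := \bigcup_{n \in \NN} X_n$.

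For the first inclusion I would argue by induction on $n$ that $X_n \subseteq \Cid(X)$. The base case is $X_0 = X \subseteq \Cid(X)$. For the inductive step, every element of $X_{n+1}$ is produced from elements of $X_n$ by either (a) an $\mathbb{F}_p$-linear combination $af+bg$, or (b) a precomposition $(x_1,\dots,x_l) \mapsto f(A\cdot (x_1,\dots,x_l)^t)$ with $A \in \mathbb{F}_q^{k\times l}$. By the induction hypothesis the inputs lie in $\Cid(X)$, and by parts (1) and (2) of Definition \ref{DefClo} the clonoid $\Cid(X)$ is closed under each of these two operations, so the output is in $\Cid(X)$.

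For the second inclusion it suffices to verify that $Y$ is itself a $(\mathbb{F}_p,\mathbb{F}_q)$-linearly closed clonoid, since $X \subseteq X_0 \subseteq Y$ and $\Cid(X)$ is by definition the smallest such clonoid containing $X$. I first observe that $X_n \subseteq X_{n+1}$: given $f \in X_n^{[r]}$, take $a=1$, $b=0$ and any $g \in X_n^{[r]}$ (for example $g = f$) so that $f = af+bg \in X_{n+1}$. Consequently the chain $(X_n)_{n \in \NN}$ is increasing. Now to verify property (1) of Definition \ref{DefClo}, take $f,g \in Y^{[r]}$ and $a,b \in \mathbb{F}_p$; choosing $n$ with $f,g \in X_n^{[r]}$ (which exists by monotonicity), we get $af+bg \in X_{n+1} \subseteq Y$. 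For property (2), take $f \in Y^{[k]}$ and $A \in \mathbb{F}_q^{k\times l}$; choosing $n$ with $f \in X_n^{[k]}$, the function $(x_1,\dots,x_l) \mapsto f(A\cdot(x_1,\dots,x_l)^t)$ lies in $X_{n+1}^{[l]} \subseteq Y$.

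I do not expect a genuine obstacle here; the only point that deserves a moment of care is that the two closure operations are packaged as a single inductive step in the definition of $X_{n+1}$, so one must notice that the monotonicity $X_n \subseteq X_{n+1}$ lets us postpone one operation while applying the other and then combine them freely within $Y$. With that observation the two properties of a linearly closed clonoid reduce to finding a single index $n$ large enough to house all finitely many inputs, which is immediate from the increasing chain.
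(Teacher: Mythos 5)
Your proof is correct and follows essentially the same approach as the paper: the forward inclusion $\bigcup_n X_n \subseteq \Cid(X)$ by the closure properties of $\Cid(X)$, and the reverse inclusion by verifying that $\bigcup_n X_n$ is itself an $(\mathbb{F}_p,\mathbb{F}_q)$-linearly closed clonoid containing $X$. In fact you are a bit more careful than the paper, which tacitly uses the monotonicity $X_n \subseteq X_{n+1}$ (when picking a single index $n_3 = \max(n_1,n_2)$ that contains both $f$ and $g$) without noting, as you do, that this follows from $f = 1\cdot f + 0\cdot f$.
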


\begin{proof}
	Let $X$ be as in the hypothesis. Then we show that $\Cid(X) = \bigcup_{n \in \NN} X_n$. The $\supseteq$ inclusion is obvious by Definition \ref{DefClo}. For the other inclusion we prove that $S = \bigcup_{n \in \NN} X_n$ is an $(\mathbb{F}_p,\mathbb{F}_q)$-linearly closed clonoid. 
	
	Let $f, g \in S^{[r]}$. Then there exist $n_1, n_2 \in \NN$ such that $f \in X_{n_1}$ and $g \in X_{n_2}$. Let $n_3 = max(\{n_1,n_2\})$ then $f, g \in X_{n_3}$ and $af+bg \in X_{n_3+1}$ for all $a,b \in \FF_p$.
	Furthermore, let $h \in S^{[k]}$. Then there exists $n' \in \NN$ such that $h \in X_{n'}$. Let  $A \in \mathbb{F}_q^{k \times r}$. By definition of $X_{n'+1}$, we have that the function $g: \mathbfsl{x} \mapsto h(A\cdot \mathbfsl{x})$, for all $\mathbfsl{x}\in \FF^r_{q}$,  is in $X_{n'+1}$, and hence $S$ is an $(\mathbb{F}_p,\mathbb{F}_q)$-linearly closed clonoid. 
	Furthermore $S \supseteq X$ and thus $S =\Cid(X)$, which is the smallest $(\mathbb{F}_p,\mathbb{F}_q)$-linearly closed clonoid containing $X$. Thus the claim holds.
	
\end{proof}

\begin{proof}[Proof of Theorem \ref{ThEmbClonoids}]

	Let $\gamma$ be the function defined in (\ref{equ:1}). First we show that $\gamma$ is well-defined. Let $C$ be a $(\ZZ_p,\ZZ_q)$-linearly closed clonoid. Then $\gamma(C)$ contains the projections and the binary sum of $\ZZ_p \times \ZZ_q$. Moreover, let $f,f_1,\dots,f_n \in \gamma(C)$ be respectively an $n$-ary and $n$ $m$-ary functions. Then there exist $g_f,g_1,\dots,g_n \in C$, $\mathbfsl{a}_f \in \ZZ_p^n$, $\mathbfsl{b}_f \in \ZZ_q^n$, $\mathbfsl{a}_1,\dots,\mathbfsl{a}_n \in \ZZ_p^m$, and $\mathbfsl{b}_1,\dots,\mathbfsl{b}_n \in \ZZ_q^m$ such that:
	
	\begin{center}
		$f(\mathbfsl{x},\mathbfsl{y}) = (\langle \mathbfsl{a}_f,\mathbfsl{x} \rangle + g_f(\mathbfsl{y}), \langle \mathbfsl{b}_f,\mathbfsl{y} \rangle )$,
	\end{center}
	for all $\mathbfsl{x} \in \ZZ_p^n$, $\mathbfsl{y} \in \ZZ_q^n$, and for all $1 \leq i \leq n$:
	
	\begin{center}
		$f_i(\mathbfsl{x},\mathbfsl{y}) = (\langle \mathbfsl{a}_i,\mathbfsl{x} \rangle + g_i(\mathbfsl{y}), \langle \mathbfsl{b}_i,\mathbfsl{y} \rangle )$,
	\end{center}
	for all $\mathbfsl{x} \in \ZZ_p^m$, $\mathbfsl{y} \in \ZZ_q^m$. Then $h = f \circ (f_1,\dots,f_n)$ can be written as:
	
	\begin{center}
		$h(\mathbfsl{x},\mathbfsl{y}) = (\langle \mathbfsl{a}_h,\mathbfsl{x} \rangle + g_h(\mathbfsl{y}), \langle \mathbfsl{b}_h,\mathbfsl{y} \rangle )$,
	\end{center}	
	where for all $j=1, \dots,m$, $(\mathbfsl{a}_h)_j = \sum_{i =1}^n(\mathbfsl{a}_f)_i(\mathbfsl{a}_i)_j$, $(\mathbfsl{b}_h)_j = \sum_{i =1}^n(\mathbfsl{b}_f)_i(\mathbfsl{b}_i)_j$, and $g_h: \ZZ_q^m \rightarrow \ZZ_p$ is defined by $g_h(\mathbfsl{y}) = \langle \mathbfsl{a}_f,\mathbf{d}(\mathbfsl{y}) \rangle + g_f(\langle \mathbfsl{b}_1, \mathbfsl{y}\rangle, \dots, \langle \mathbfsl{b}_n, \mathbfsl{y}\rangle)$ with $\mathbf{d}(\mathbfsl{y}) = (g_1(\mathbfsl{y}),\dots,g_n(\mathbfsl{y}))$,
	for all $\mathbfsl{y} \in \ZZ_q^m$. We can see from Definition \ref{DefClo} that $g_h \in C$. Thus $\gamma(C)$ is closed under composition and $\gamma$ is well-defined. 
	
	Next we prove that $\gamma$ is injective. Let $C$ and $D$ be two $(\ZZ_p,\ZZ_q)$-linearly closed clonoids such that $\gamma(C) = \gamma(D)$ and let $g \in C$ be an $n$-ary function. Then let $s: \ZZ_p^n \times \ZZ_q^n \rightarrow \ZZ_p \times \ZZ_q$ be such that $e(g) =s$. Then $s$ is in $\gamma(C) = \gamma(D)$. By definition of $\gamma$, this implies that $e(g) = e(g') + h_{(\mathbfsl{a},\mathbfsl{b})}$ for some $g' \in D$ and $(\mathbfsl{a},\mathbfsl{b}) \in \ZZ_p^n \times \ZZ_q^n$. The only possibility is that  $g = g' \in D$ and thus $C \subseteq D$. We can repeat this argument for the other inclusion and hence $\gamma$ is injective. Furthermore, we have that for all $C,D \in \mathcal{L}(\ZZ_p,\ZZ_q)$ clearly $\gamma(C \cap D) = \gamma(C) \cap \gamma(D)$. We can observe that $\gamma$ is monotone, thus $\gamma(C \vee D) \supseteq \gamma(C) \vee \gamma(D)$. For the other inclusion we prove by induction on $n$ that $\gamma(C) \vee \gamma(D) \supseteq e(X_n)$, where $C \vee D = \bigcup_{n \in \NN} X_n$ with:
	\begin{flushleft}
		$X_0 = C \cup D$
		\\$X_{n+1} = \{af + bg \mid a,b \in \mathbb{F}_p, f,g \in X_n^{[m]}, m \in \NN\} \cup \{g : (x_1,\dots,x_s) \mapsto f(A\cdot (x_1,\dots,x_s)^t) \mid f \in X_n^{[m]}, A \in \mathbb{Z}_q^{m \times s}, m,s \in \NN \}$.
	\end{flushleft}
	Base step $n = 0$: $e(C \cup D) = e(C) \cup e(D) \subseteq \gamma(C) \vee \gamma(D)$.
	
	Induction step $n >0$: suppose that the claim holds for $n-1$. Then let $g \in e(X_{n})$. Hence there exists $u \in X_n$ such that $e(u)=g$ and either $u$ is a linear combination of functions in $X_{n-1}$ or there exist $f \in  X_{n-1}^{[m]}$, $A \in \mathbb{Z}_q^{m \times s}$, and $m,s \in \NN$ such that $u : \mathbfsl{x} \mapsto f(A\cdot \mathbfsl{x}^t)$. In both cases we have $g \in \Clg(e(X_{n-1})) \subseteq \gamma(C) \vee \gamma(D)$ and this concludes the induction proof. By Lemma \ref{Lemclogen-2}, $\gamma(C) \vee \gamma(D) \supseteq e(C \vee D)$. 
	
	Let $f \in \gamma(C \vee D)^{[n]}$. Then there exist $g \in C \vee D$,  $\mathbfsl{a} \in \ZZ_p^n$, and $\mathbfsl{b} \in \ZZ_q^n$ such that $f = e(g) + h_{(\mathbfsl{a},\mathbfsl{b})}$. By Remark \ref{RemLinComb}, $h_{(\mathbfsl{a},\mathbfsl{b})} \in \gamma(C) \vee \gamma(D)$, thus $f \in \gamma(C) \vee \gamma(D)$ and $\gamma(C) \vee \gamma(D) = \gamma(C \vee D)$. Hence $\gamma$ is an embedding.
\end{proof}

\section{Independent algebras}
\label{SecIndipAlg}
In this section we characterize all clones containing $\Clo(\langle\ZZ_{pq},+\rangle)$ that preserve the whole congruence lattice of four elements. In this cases we see that we can actually split these clones on $\ZZ_{pq}$ in two clones, one containing $\Clo(\langle\ZZ_{p},+\rangle)$ and the other containing $\Clo(\langle\ZZ_{q},+\rangle)$. To this end, let us introduce the concept of independent algebras (see also \cite{AM.IOAW}).

\begin{Def}
	
	Two algebras $\A$ and $\B$ of the same variety $\vv{V}$ are \emph{independent} if there exists a binary term in the language of $\vv{V}$ such that $ \A \models t(x,y) \approx x$ and $ \B \models t(x,y) \approx y$.	
	
\end{Def}

Now we prove a theorem that characterizes the clones containing $\Clo(\langle \ZZ_{pq},+\rangle)$ whose algebra can be split in a direct product of two independent algebras. We denote by \index{$\mathbf{Clo}^{\diamond}$}$\mathbf{Clo}^{\diamond}(\langle\ZZ_{pq},+\rangle)$ the lattice of all clones containing $\Clo(\langle\ZZ_{pq},+\rangle)$ which preserve the congruences $\{0, \pi_1, \pi_2, 1\}$. We will present an independent proof of Theorem \ref{ThmembeddingClones} which also follows from \cite[Lemma $6.1$]{AM.IOAW}.

\begin{proof}[Proof of Theorem \ref{ThmembeddingClones}]
	
	Let $\rho_1: \mathbf{Clo}^{\diamond}(\langle\ZZ_{pq},+\rangle) \rightarrow  \mathbf{Clo}^{\mathcal{L}}(\langle\ZZ_{p},+\rangle)$ and $\rho_2:  \mathbf{Clo}^{\diamond}(\langle$ $\ZZ_{pq},+\rangle)$ $ \rightarrow  \mathbf{Clo}^{\mathcal{L}}(\langle\ZZ_{q},+\rangle)$ be defined by:
	
	\begin{equation}
	\begin{split}
	\rho_1(C) &:= \{f \mid \text{ there exist } n \in \NN, g \in C^{[n]}: f = \pi_p^{\ZZ_p \times \ZZ_q} \circ  g\mid{\ZZ^n_p \times \{\mathbf{0}_n \}}\}
	\\\rho_2(C) &:= \{f \mid \text{ there exist } n \in \NN, g \in C^{[n]}: f = \pi_q^{\ZZ_p \times \ZZ_q} \circ g\mid{\{\mathbf{0}_n\} \times \ZZ^n_q} \},
	\end{split}
	\end{equation}
	where $\pi_p^{\ZZ_p \times \ZZ_q},\pi_q^{\ZZ_p \times \ZZ_q}$ are respectively the projections over $\ZZ_p$ and over $\ZZ_q$. Let $\rho: \Clo(\langle\ZZ_{pq},+\rangle) \rightarrow \Clo(\langle\ZZ_{p},+\rangle) \times \Clo(\langle\ZZ_{q},+\rangle)$ be defined by $\rho: C \mapsto (\rho_1(C),\rho_2(C))$ for all $C \in \Clo(\langle\ZZ_{pq},+\rangle)$. First of all let us prove that $\rho$ is well-defined. To this end, let $C \in \Clo(\langle\ZZ_{pq},+\rangle)$.  By $(1)$ and $(2)$ of Lemma \ref{Lem1-3}, for all $n \in \NN$ and $f \in C^{[n]}$ there exist $f_p:\ZZ^n_p \rightarrow \ZZ_p$ and $f_q:\ZZ^n_q \rightarrow \ZZ_q$ such that for all $(\mathbfsl{x},\mathbfsl{y}) \in \ZZ_p^n \times \ZZ_q^n$, $f(\mathbfsl{x},\mathbfsl{y}) = (f_p(\mathbfsl{x}),f_q(\mathbfsl{y}))$. Furthermore, let $n \in  \NN$. By Remark \ref{RemLinComb}, we have that the function $h: \ZZ_p^n \times \ZZ_q^n \rightarrow \ZZ_p \times \ZZ_q$ defined by $h: (\mathbfsl{x},\mathbfsl{y}) \mapsto ((\mathbfsl{x})_i,(\mathbfsl{y})_i)$ is in $C$ and thus the projections $\pi_i^n \in \rho_1(C)$, for all $1 \leq i \leq n$. Let $f \in \rho_1(C)^{[n]}$ and $g_1,\dots,g_n \in \rho_1(C)^{[m]}$. Then there exist $f',g_1',\dots,g'_n \in C$ such that $f = \pi_p^{\ZZ_p \times \ZZ_q} \circ f'\mid{\ZZ^n_p \times \{\mathbf{0}_n \}}$ and for all $1 \leq i \leq n$, $g_i= \pi_p^{\ZZ_p \times \ZZ_q} \circ g_i'\mid{\ZZ^m_p \times \{\mathbf{0}_m \}}$. Thus we have that $f' \circ (g'_1,\dots,g'_n) \in C$. Hence $f \circ (g_1,\dots,g_n)= \pi_p^{\ZZ_p \times \ZZ_q} \circ  f'\circ (g'_1,\dots,g'_n)\mid\ZZ^m_p \times \{\mathbf{0}_m \} \in \rho_1(C)$. Furthermore, the binary sum of $\ZZ_p$ is in $C$, since, by Remark \ref{RemLinComb}, the  function $h:\ZZ_p^2 \times \ZZ_q^2 \rightarrow \ZZ_p \times \ZZ_q$ defined by $h:((x_1,y_1),(x_2,y_2)) \mapsto (x_1+x_2,0)$ is in $C$. Hence $\rho_1(C) \in \Clo(\langle\ZZ_{p},+\rangle)$ and $\rho_1$ is well-defined. Symmetrically, we can prove that $\rho_2$ is well-defined.
	
	We define the function $\psi: \Clo^{\mathcal{L}}(\langle\ZZ_{p},+\rangle) \times \Clo^{\mathcal{L}}(\langle\ZZ_{q},+\rangle) \rightarrow  \Clo^{\diamond}(\langle\ZZ_{pq},+\rangle)$ such that $\psi(C_1,C_2) := \{f \mid \text{ there exist }  n \in \NN, f_1 \in C_1^{[n]},  f_2 \in C_2^{[n]}: \text{ for all } (\mathbfsl{x}, \mathbfsl{y}) \in \ZZ_p^n \times \ZZ_q^n, f(\mathbfsl{x}, \mathbfsl{y}) = (f_1(\mathbfsl{x}),f_2(\mathbfsl{y}))\}$.
	
	Clearly, $\psi$ is well-defined. We prove that $\rho \circ \psi = \id_{\Clo^{\mathcal{L}}(\langle\ZZ_{p},+\rangle)\times \Clo^{\mathcal{L}}(\langle\ZZ_{q},+\rangle) }$ and $\psi \circ \rho = \id_{\Clo^{\diamond}(\langle\ZZ_{pq},+\rangle)}$. First we can see that both $\rho$ and $\psi$ are monotone.
	
	For the first inequality let $(C_1,C_2) \in \Clo^{\mathcal{L}}(\langle\ZZ_{p},+\rangle)\times \Clo^{\mathcal{L}}(\langle\ZZ_{q},+\rangle) $. For the monotonicity of both the functions we have that $\rho \circ \psi ((C_1,C_2) ) \supseteq (C_1,C_2)$. Let $(f_1,f_2) \in \rho \circ \psi ((C_1,C_2) )^{[n]}$. Then there exist $g_1, g_2 \in \psi ((C_1,C_2))$ such that $f_1 = \pi_p^{\ZZ_p \times \ZZ_q}\circ g_1\mid{\ZZ^n_p \times \{\mathbf{0}_n\} }  $ and $f_2 = \pi_q^{\ZZ_p \times \ZZ_q} \circ g_2\mid{\{\mathbf{0}_n\} \times \ZZ^n_q} $. Thus, by definition of $\psi$, $(f_1,f_2) \in (C_1,C_2)$ and the first inequality holds.
	
	For the second inequality let $C \in \Clo^{\diamond}(\langle\ZZ_{pq},+\rangle)$. For the monotonicity of both $\psi$ and $\rho$  we have that $\psi \circ \rho (C) \supseteq C$. Let $f \in \psi \circ \rho (C)$. Then there exists $(f_1, f_2) \in \rho(C)$ such that for all $(\mathbfsl{x}, \mathbfsl{y}) \in \ZZ_p^n \times \ZZ_q^n$, $f(\mathbfsl{x}, \mathbfsl{y}) = (f_1(\mathbfsl{x}),f_2(\mathbfsl{y}))$. Then there exist $g_1, g_2 \in C$ such that $f_1 = \pi_p^{\ZZ_p \times \ZZ_q} \circ  g_1\mid{\ZZ_p^n \times \{\mathbf{0}_n \}}$ and $f_2 = \pi_q^{\ZZ_p \times \ZZ_q} \circ g_2\mid{\{\mathbf{0}_n\} \times \ZZ^n_q}$.   Thus $f = q^{(p-1)}g_1 + p^{(q-1)}g_2\in C$ and the second inequality holds.

	Thus $\psi = \rho^{-1}$ and $\rho$ are monotone bijective functions with monotone inverse and hence $\rho$ is a lattice isomorphism.

\end{proof}

\begin{Cor}
	
	Let $p$ and $q$ be distinct prime numbers. Then there are $(n(q) + 3)(n(p) + 3)$ many clones containing $\Clo(\langle\ZZ_{pq},+\rangle)$ which preserve the congruences $\{0, \pi_1, \pi_2, 1\}$, where $n(k)$ is the number of divisors of $k-1$. 

\end{Cor}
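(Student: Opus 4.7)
The plan is to combine the lattice isomorphism of Theorem \ref{ThmembeddingClones} with a classification of the clones above $\Clo(\langle\ZZ_r,+\rangle)$ on a single prime $r$. Theorem \ref{ThmembeddingClones} provides a lattice isomorphism
\[
\mathbf{Clo}^{\diamond}(\langle\ZZ_{pq},+\rangle)\;\cong\;\mathbf{Clo}^{\mathcal{L}}(\langle\ZZ_p,+\rangle)\times \mathbf{Clo}^{\mathcal{L}}(\langle\ZZ_q,+\rangle),
\]
so the number of clones counted in the corollary is the product of the two factor cardinalities. The statement therefore reduces to proving that for every prime $r$,
\[
|\mathbf{Clo}^{\mathcal{L}}(\langle\ZZ_r,+\rangle)| = n(r)+3.
\]

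To establish this single-prime count I would partition the clones $C\supseteq \Clo(\langle\ZZ_r,+\rangle)$ into two disjoint families according to whether $C$ contains a nonzero constant. In the \emph{with-constants} family, a nonzero constant together with the $\ZZ_r$-linear combinations already present in $\Clo(\langle\ZZ_r,+\rangle)$ generates every element of $\ZZ_r$ as a constant; consequently $C$ contains the affine clone. A polynomial inspection driven by Remark \ref{RemPolCom} and Lemma \ref{Lem2Genexprofa} then shows that only two such clones exist: the affine clone and the clone of all operations on $\ZZ_r$. In the \emph{constant-free} family every $f\in C$ satisfies $f(\mathbf{0})=0$, and to each such $C$ I would attach the subgroup
\[
H_C := \{\alpha \in \ZZ_r^* \mid f(\alpha\mathbfsl{x})=\alpha f(\mathbfsl{x}) \text{ for all } f \in C \text{ and all } \mathbfsl{x}\}\leq \ZZ_r^*,
\]
and claim that $C$ is either the linear clone $\Clo(\langle\ZZ_r,+\rangle)$ itself or the full clone $C_{H_C}$ of all polynomials vanishing at $\mathbf{0}$ satisfying $f(\alpha\mathbfsl{x})=\alpha f(\mathbfsl{x})$ for every $\alpha \in H_C$. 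Since $\ZZ_r^*$ is cyclic of order $r-1$, its subgroups biject with divisors of $r-1$, producing $n(r)$ clones $C_H$; adjoining the linear clone (which is not of this form, being strictly smaller than $C_{\ZZ_r^*}$ whenever $r\geq 3$) yields $n(r)+1$ constant-free clones. Summing the two families gives $n(r)+3$, and multiplying the counts for $r=p$ and $r=q$ produces the claimed $(n(q)+3)(n(p)+3)$.

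The main obstacle is the classification inside the constant-free family, namely the claim that every such $C$ equals either $\Clo(\langle\ZZ_r,+\rangle)$ or $C_{H_C}$. The plan is to expand members of $C$ in the monomial basis guaranteed by Remark \ref{RemPolCom}, decompose them into $\ZZ_r^*$-homogeneous components using that $C$ is closed under the scalar substitutions $\mathbfsl{x}\mapsto\alpha\mathbfsl{x}$ (which lie in $\Clo(\langle\ZZ_r,+\rangle)\subseteq C$), and then apply a binomial-expansion identity in the spirit of $(\mathbfsl{x}+\mathbfsl{y})^\mathbfsl{m}$ to a single nonlinear monomial in order to generate every other $H_C$-equivariant monomial. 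The reverse direction (different subgroups yield different clones) is elementary: for each subgroup $H$ one exhibits a monomial whose total degree lies in the appropriate coset modulo $|H|$ but not in the coset attached to any strictly smaller subgroup. Once this classification is complete, the corollary is just arithmetic.
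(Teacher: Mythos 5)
Your first step coincides exactly with the paper's: both invoke Theorem~\ref{ThmembeddingClones} to turn the count into the product $|\mathbf{Clo}^{\mathcal{L}}(\langle\ZZ_p,+\rangle)|\cdot|\mathbf{Clo}^{\mathcal{L}}(\langle\ZZ_q,+\rangle)|$. Where you diverge is in the second step. The paper simply cites the known single-prime count: \cite[Corollary~1.3]{Kre.CFSO} for $r>2$ and Post's lattice \cite{Pos.TTVI} for $r=2$, both of which give $|\mathbf{Clo}^{\mathcal{L}}(\langle\ZZ_r,+\rangle)|=n(r)+3$ directly, and the proof is over in two sentences. You instead propose to re-derive that count from scratch by partitioning into with-constants and constant-free families and attaching a subgroup $H_C\leq\ZZ_r^*$ to each constant-free clone.

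Your sketch does produce the right number and your decomposition ($2$ with constants, $1+n(r)$ without) is consistent with what Kreinecker proves. But be aware that what you are sketching is, in substance, a reproof of Kreinecker's classification rather than an alternative shortcut. The two load-bearing claims --- that a clone containing constants and a nonaffine operation must already be all of $\mathcal{O}_{\ZZ_r}$, and that a constant-free clone strictly above $\Clo(\langle\ZZ_r,+\rangle)$ is forced to be the full $H_C$-equivariant clone $C_{H_C}$ --- are each nontrivial. The second in particular requires the monomial-generation argument (take one nonlinear monomial, produce all monomials of the admissible degrees) that occupies several lemmas in \cite{Kre.CFSO}, and phrases like ``apply a binomial-expansion identity'' conceal exactly the induction on total degree and number of variables that makes this work (compare Lemmas~\ref{Lempclonoids}--\ref{LemFoundPcloni} of the present paper, which are adaptations of Kreinecker's Lemmas~3.8 and~3.9). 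If your intention is to make the argument self-contained, those steps must be spelled out; if not, the efficient move is to do what the paper does and cite the single-prime classification, after which the corollary really is just arithmetic as you say.
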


\begin{proof}	
	From Theorem \ref{ThmembeddingClones} we obtain that the number of clones containing $\Clo(\langle\ZZ_{pq},$ $+\rangle)$ is the product of the number of clones containing $\Clo(\langle\ZZ_{p},+\rangle)$ and the number of clones containing $\Clo(\langle\ZZ_{q},+\rangle)$. These numbers are determined in \cite[Cororllary $1.3$]{Kre.CFSO}, for $p > 2$ and in \cite{Pos.TTVI} for $p = 2$. From these two results we have that for every prime number $p$ the clones containing $\Clo(\langle\ZZ_{p},+\rangle)$ are $n(p) + 3$, where $n(k)$ is the number of divisors of $k-1$. 

\end{proof}

This complete the characterization of all clones containing $\Clo(\langle\ZZ_{pq},+\rangle)$ which belong to products of independent algebras.

\section{A general bound}
\label{AGenBoun}

In the following section our goal is to determine a bound for the cardinality of the lattice of all clones containing $\Clo(\langle \ZZ_{pq},+\rangle)$ and to find a bound for the arity of  sets of generators of these clones. Theorem \ref{Cor3} gives a complete list of generators for a clone containing $\Clo(\langle\ZZ_{pq},+\rangle)$ that explains the connection between clonoids and clones in this case. The generators of Theorem \ref{Cor3} are substantially formed by a product of a unary function generating a $(\ZZ_p,\ZZ_q)$-linearly closed clonoid and a monomial generating a clone on $\ZZ_p$. This puts together the characterization in \cite{Kre.CFSO} and \cite[Theorems $1.2$, $1.3$]{Fio.CSOF}, which are the main ingredients of this section.

 We start showing some lemmata which we need to prove that clones containing $\Clo(\langle \ZZ_{pq},+\rangle)$ are strictly characterized by the $(\ZZ_p,\ZZ_q)$-linearly closed clonoids and the $(\ZZ_q,\ZZ_p)$-linearly closed clonoids.

In this paper we have to deal with polynomials whose coefficients are finitary functions from $\ZZ_q$ to $\ZZ_p$. From now on let $\mathbf{R}[X]$ be a polynomial ring where $\mathbf{R} = \ZZ_p^n$ for some $n \in \NN$. The next step will be to generalize some results in \cite{Kre.CFSO} about $p$-linearly closed clonoids to polynomials in \index{$\mathbf{R}[X]$}$\mathbf{R}[X]$. Let us start with the notation.

Following \cite{Kre.CFSO} we denote by \index{$\tD(h)$}$\tD(h)$ the \emph{total degree of a monomial} $h$, which is defined as the sum of the exponents. We denote by \index{$\Var(h)$}$\Var(h)$ \emph{the set of variables occurring in} $h$ and with $\Var(f)$ \emph{the set of variables occurring in monomials of} $f$ with non-zero coefficients. We denote by \index{$\MInd(f)$}$\MInd(f)$ the maximum of the indices of variables in $\Var(f)$. Let $f \in \mathbf{R}[X]$ be such that $\MInd(f) = n$ and let $\mathbfsl{x}$ be a vector composed by the variables in $(x_1,\dots,x_n)$. Then $f$ can be written as:

\begin{equation}
\label{expressiongen}
f =\sum_{\mathbfsl{m} \in \NN_0^n}r_{\mathbfsl{m}}\mathbfsl{x}^{\mathbfsl{m}},
\end{equation}
for some sequence $\{r_{\mathbfsl{m}}\}_{\mathbfsl{m} \in \NN_0^n}$ in $\mathbf{R}$ with only finitely many non-zero members and where $\mathbfsl{x}^{\mathbfsl{m}}=\prod_{i=1}^nx_i^{m_i}$

We denote the set of all \emph{degrees of} $f$ by \index{$\DEGS(f)$}$\DEGS(f) := \{\mathbfsl{m} \mid \mathbfsl{m} \in \NN_0^n, r_{\mathbfsl{m}} \not= 0\}$ with $\DEGS(0) := \emptyset$. We denote by \index{$\MON(f)$}$\MON(f)$ the set of all monomials in $f$. We denote by \index{$\TD(f)$}$\TD(f) := \{\langle \mathbfsl{m}, \mathbf{1}_n \rangle \mid \mathbfsl{m} \in \DEGS(f)\}$.

Next we introduce a notation for the composition of multivariate polynomials. Let $m,n,h \in \NN$, $g,f_1,\dots,f_h \in \mathbf{R}[X]$, and let $\mathbfsl{b} = (b_1, \dots, b_h ) \in \NN^h$ with $1 \leq b_1 < b_2 < \cdots < b_h \leq \MInd(f)$. Then we define \index{$\circ_{\mathbfsl{b}}$}$g \circ_{\mathbfsl{b}} (f_1 ,\dots, f_m)$ by:

\begin{equation}
g \circ_{\mathbfsl{b}} (f_1,\dots,f_h) := g(x_1,\dots, x_{b_1-1}, f_1, x_{b_1+1},\dots,x_{b_2-1}, f_2, x_{b_2+1},\dots ).
\end{equation}
Let $p$ be a prime and let $f \in \mathbf{R}[X]$, where $\mathbf{R} = \ZZ_p^n$. Since later we want to introduce the induced function of a polynomial, in order to have a unique polynomial for every induced function, we consider the ideal $I$ generated by the polynomials $x_i^p-x_i \in \mathbf{R}[X]$ in $\mathbf{R}[X]$, for every $x_i \in X$. By \cite[Chapter $15.3$]{Eis.CA} there is a unique reminder $rem(f)$ of $f$ with respect to $I$. This remainder has the property that the exponents of the variables are less or equal $p-1$. Following \cite[Section $2$]{Kre.CFSO}, we denote by $\mathbf{R}^*[X] $ the set of all polynomials in $\mathbf{R}[X]$ with this property. We can observe that these polynomials form a set of representatives of the set of all classes of the quotient $\mathbf{R}/I$.

Let $f \in \mathbf{R}^*[X]$ with $\MInd(f) = n$ and let $\mathbfsl{x}$ be a vector composed by the variables in $(x_1,\dots,x_n)$. Then $f$ can be written as:

\begin{equation}
\label{expressiongenmodp}
f =\sum_{\mathbfsl{m} \in [p-1]_0^n}r_{\mathbfsl{m}}\mathbfsl{x}^{\mathbfsl{m}},
\end{equation}
for some sequence $\{r_{\mathbfsl{m}}\}_{\mathbfsl{m} \in [p-1]_0^n}$ in $\mathbf{R}$. 

\begin{Def}
	\label{DefPolyClonoid}
	Let $p$ be a prime number and let $\mathbf{R} = \ZZ_p^n$. An $\mathbf{R}$-\emph{polynomial linearly closed clonoid} is a non-empty subset $C$ of $\mathbf{R}[X]$ with the following properties:
	
	\begin{enumerate}
		\item[(1)] for all $f \in C$, $g \in C$, and $a,b \in \ZZ_p$
		\begin{center}
			$af+ bg \in C$;
		\end{center}
		
		\item[(2)] for all $s \in \NN$, $f \in C$, $l \leq \MInd(f)$, and $\mathbfsl{a} \in \mathbb{Z}^{ s}_p$:
		
		\begin{center}
			$g = f\circ_l (\sum_{i=1}^s(\mathbfsl{a})_ix_i)$ is in $C$.
		\end{center}
		
	\end{enumerate}
\end{Def}

Let $S \subseteq \mathbf{R}[X]$. Then we denote by \index{$\langle \rangle_{\mathbf{R}}$}$\langle S\rangle_{\mathbf{R}}$ the $\mathbf{R}$-\emph{polynomial linearly closed clonoid} generated by $S$. We can see that $\mathbf{R}^*[X]$ forms an $\mathbf{R}$-linearly closed clonoid.

Let us now modify \cite[Lemmata $3.8$ and $3.9$]{Kre.CFSO} to deal with $\mathbf{R}$-polynomial linearly closed clonoids. Since the statements of the following two lemmata are not the same of \cite[Lemmata $3.8$ and $3.9$]{Kre.CFSO} we show them with proofs that are essentially the same of \cite{Kre.CFSO}.

\begin{Lem}
	\label{Lempclonoids}
	Let $d \in \NN$, let $r \in \mathbf{R}$ and let $g \in\mathbf{R}[X]$ with $max(\TD(g)) \leq d$, and $\mathbf{1}_d \not\in \DEGS(g)$. Then $rx_1\cdots x_d\in \langle\{rx_1\cdots x_d + g\}\rangle_{\mathbf{R}}$.
\end{Lem}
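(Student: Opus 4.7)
The plan is to extract from $f = rx_1\cdots x_d + g$ the multilinear degree-$d$ component in the variables $x_1, \ldots, x_d$, which under the hypotheses must coincide with $rx_1\cdots x_d$. First I would apply property (2) of Definition \ref{DefPolyClonoid} (taking any $s$ with $\mathbfsl{a} = \mathbf{0}_s$) to substitute $0$ for every variable $x_j$ with $j > d$ appearing in $f$. This produces a polynomial $f_1 = rx_1\cdots x_d + g' \in \langle\{f\}\rangle_{\mathbf{R}}$, where $g' \in \mathbf{R}^*[x_1, \ldots, x_d]$ still satisfies $\max(\TD(g')) \leq d$ and $\mathbf{1}_d \notin \DEGS(g')$: the substitution only kills monomials containing some $x_j$ with $j > d$, so both invariants are preserved.

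Next, for each $i \in [d]$ I would isolate the part of $f_1$ that has degree exactly $1$ in $x_i$. For each $t \in \ZZ_p$, property (2) yields $f_1^{(i,t)} := f_1 \circ_i (tx_i) \in \langle\{f\}\rangle_{\mathbf{R}}$. Writing $f_1 = \sum_{k=0}^{p-1} x_i^k h_k$, with each $h_k$ a polynomial in the remaining variables, gives $f_1^{(i,t)} = \sum_{k=0}^{p-1} t^k x_i^k h_k$. The $p\times p$ Vandermonde matrix $(t^k)_{t \in \ZZ_p,\, k \in [p-1]_0}$ is invertible over $\ZZ_p$, so there exist coefficients $\lambda_t \in \ZZ_p$ with $\sum_{t \in \ZZ_p} \lambda_t t^k = \delta_{k,1}$ for every $k \in [p-1]_0$. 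Closure under $\ZZ_p$-linear combinations (property (1)) then gives
\[
P_i(f_1) \ := \ \sum_{t \in \ZZ_p} \lambda_t\, f_1^{(i,t)} \ = \ x_i h_1 \ \in \ \langle\{f\}\rangle_{\mathbf{R}},
\]
i.e.\ exactly the sum of the monomials of $f_1$ in which $x_i$ has exponent $1$.

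Finally, since $P_1, \ldots, P_d$ act on distinct variables they commute, and I would apply them in succession to $f_1$. The composition $P_1 \circ \cdots \circ P_d(f_1)$ is the sum of those monomials of $f_1$ in which every $x_i$, $i \in [d]$, occurs to exponent exactly $1$; each such monomial has total degree at least $d$, while $\max(\TD(f_1)) \leq d$, so only $x_1 \cdots x_d$ can survive, with coefficient equal to $r$ (since $\mathbf{1}_d \notin \DEGS(g')$). This yields $rx_1\cdots x_d \in \langle\{f\}\rangle_{\mathbf{R}}$, as desired. The only mildly delicate point is the Vandermonde interpolation over $\ZZ_p$, but it is routine: $\ZZ_p$ is a field and the $p$ scalars in it are pairwise distinct, so the system for the $\lambda_t$ is uniquely solvable.
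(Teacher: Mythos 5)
Your proof is correct in spirit but takes a genuinely different route from the paper's, and it contains one unjustified assertion that should be flagged.

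The paper's argument, after restricting to the variables $x_1,\dots,x_d$, proceeds by induction on the number of monomials of $g'$: using $\max(\TD(g'))\leq d$ and $\mathbf{1}_d\notin\DEGS(g')$ it finds, for any monomial $m$ of $g'$, some $x_l\in\{x_1,\dots,x_d\}$ not occurring in $m$, and then forms $(rx_1\cdots x_d+g')-(rx_1\cdots x_d+g')\circ_{(l)}0$, which keeps $rx_1\cdots x_d$ while cancelling $m$. You instead build, for each $i\in[d]$, a $\ZZ_p$-linear ``degree-one-in-$x_i$'' projector $P_i$ from the substitutions $x_i\mapsto tx_i$ ($t\in\ZZ_p$) via Vandermonde interpolation, and compose the $P_i$'s; a total-degree count plus $\mathbf{1}_d\notin\DEGS(g')$ shows only $rx_1\cdots x_d$ survives. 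Your version is a single clean linear-algebra step using all the scalings $t\in\ZZ_p$; the paper uses only the substitution by $0$ and a monomial-count induction, so it is more elementary but needs the observation that some $x_l$ must be absent from each monomial.

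The gap: you assert $g'\in\mathbf{R}^*[x_1,\dots,x_d]$, i.e.\ that after killing $x_j$ for $j>d$ every exponent is at most $p-1$, and your decomposition $f_1=\sum_{k=0}^{p-1}x_i^kh_k$ relies on this. The lemma is stated for arbitrary $g\in\mathbf{R}[X]$, so this is not automatic once $d\geq p$ (e.g.\ a term $rx_1^p x_2^{d-p}$ is allowed). The fix is routine: write $f_1=\sum_{k\geq 0}x_i^kh_k$ without truncation; since $t^k=t^{k'}$ for every $t\in\ZZ_p$ whenever $k,k'\geq 1$ and $k\equiv k'\pmod{p-1}$, your coefficients $\lambda_t$ pick out $\sum\{x_i^kh_k : k\geq1,\ k\equiv1\pmod{p-1}\}$ rather than just $x_ih_1$. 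After composing $P_1,\dots,P_d$, each surviving monomial has every $x_i$ ($i\in[d]$) appearing to degree $\geq1$; if any exponent were $\geq p$ the total degree would be at least $(d-1)+p>d$, contradicting $\max(\TD(f_1))\leq d$, so in fact every exponent is exactly $1$ and only $rx_1\cdots x_d$ remains. (When $d<p$ the hypothesis $\max(\TD)\leq d$ already forces all exponents $\leq p-1$, so nothing needs fixing.) With this patch your argument is a valid alternative proof.
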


\begin{proof}
	If $g = 0$ we have nothing to show. Let $g \not= 0$ and let $C := \langle\{rx_1\cdots x_d + g\}\rangle_{\mathbf{R}}$. We prove that there exists $g' \in \mathbf{R}[X]$ with $max(\TD(g')) \leq d$, $\mathbf{1}_d \not\in \DEGS(g')$ and $\Var(g') \subseteq \{x_1 ,\dots, x_d\}$ such that $rx_1\cdots x_d + g' \in C$. 
	
	Case $\Var(g) \subseteq \{x_1 ,\dots, x_d\}$: in this case the claim obviously holds.
	
	Case $\Var(g) \not\subseteq \{x_1 ,\dots, x_d\}$: let $B := \Var(g) \backslash \{x_1 ,\dots, x_d\}$. Let $k \in \NN$ and let $b_1 ,\dots, b_k \in \NN$ be such that $b_1 < \dots < b_k $ and $B = \{x_{b_1},\dots, x_{b_k}\}$. We set $g' \in \mathbf{R}[X]$ such that $g' := g \circ_{(b_1 ,...,b_k)} \ \mathbf{0}_k$. It follows that $\Var(g') \subseteq \{x_1,\dots,x_d\}$ and we can observe that:
	
	\begin{equation}
	rx_1\cdots x_d + g' = (rfx_1\cdots x_d + g) \circ_{(b_1 ,\dots,b_k)}\  \mathbf{0}_k \in C.
	\end{equation}
	Next we proceed by induction on the number of monomials of $g'$ in order to show that $rx_1\cdots x_d \in \langle \{rx_1\cdots x_d + g' \}\rangle_{\mathbf{R}} \subseteq C$. 
	
	If $g'$ has zero monomial then the claim obviously holds. Let us suppose that the claim holds for a number of monomials $t \geq 0$. Let the number of monomials of $g'$ be $t+1$. We observe that there exists $x_l \in \{x_1,\dots,x_d\}$ and a monomial $m$ of $g'$ such that $x_l$ does not appear in $m$. Thus we obtain:
	\begin{equation}
	rx_1\cdots x_d + g' - (rx_1\cdots x_d + g') \circ_{(l)} 0 = rx_1\cdots x_d + g'' \in C.
	\end{equation}
	Thus $g'' := g' - g' \circ_{(l)} 0$ satisfies the properties that $max(\TD(g'')) \leq d$, $\mathbf{1}_d \not\in \DEGS(g'')$, $\Var(g'') \subseteq \{x_1 ,\dots, x_d \}$ and $g''$ has fewer monomials than $g'$, since the monomial $m$ is cancelled in $g' - g' \circ_{(l)} 0$. By the induction hypothesis $rx_1\cdots x_d\in \langle\{rfx_1\cdots x_d + g''\}\rangle_{\mathbf{R}} \subseteq \langle\{rfx_1\cdots x_d + g'\}\rangle_{\mathbf{R}}$ and the claim holds.
\end{proof}

With Lemma \ref{Lempclonoids} we can now prove the following generalization of \cite[Lemma $3.9$]{Kre.CFSO} with the same proof.

\begin{Lem}
	\label{LemFoundPcloni}
	Let $d \in \NN$ and let $f$ be a polynomial in $\mathbf{R}^*[X]$ with $d := max(\TD(f))$. Let $m \in \MON(f)$ be a monomial with coefficient $r \in \mathbf{R}$ and $\tD(m) = d$. Then the following hold:
	\begin{enumerate}
		\item[(1)] if $d = 0$, then $r \in \langle\{f\}\rangle_{\mathbf{R}}$;
		\item[(2)] if $d > 0$, then $rx_1 \dots x_d \in \langle\{f\}\rangle_{\mathbf{R}}$.
	\end{enumerate}
	
\end{Lem}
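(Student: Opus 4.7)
My plan is to reduce to Lemma \ref{Lempclonoids} by using the operations of Definition \ref{DefPolyClonoid} to produce, from $f$, a polynomial of the form $rx_1 \cdots x_d + g$ with $\max(\TD(g)) \le d$ and $\mathbf{1}_d \notin \DEGS(g)$; the conclusion then follows directly from that lemma.

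Case (1), where $d = 0$, is immediate: $f$ is the constant polynomial $r$, so $r = f \in \langle\{f\}\rangle_{\mathbf{R}}$. For case (2), write $m = r\prod_{j=1}^k x_{i_j}^{a_j}$ with $\{i_1 < \dots < i_k\} = \Var(m)$, each $a_j \in \{1,\dots,p-1\}$, and $\sum_j a_j = d$. Letting $N = \MInd(f)$, I would pick disjoint blocks of fresh variables $y_{j,1},\dots,y_{j,a_j}$ from among $x_{N+1}, x_{N+2}, \dots$. Iteratively substitute $x_{i_j} \mapsto y_{j,1} + \dots + y_{j,a_j}$ for $j = 1,\dots,k$, and then substitute $x_i \mapsto 0$ for every $i \in \{1,\dots,N\} \setminus \{i_1,\dots,i_k\}$. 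Both moves are instances of Definition \ref{DefPolyClonoid}(2), and since each $a_j \le p-1$, the resulting polynomial $f_1$ still has all exponents $\le p-1$.

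The \emph{crux} of the argument is to show that the coefficient in $f_1$ of the fully squarefree monomial $M := \prod_{j,l} y_{j,l}$ equals $r \prod_j a_j!$, which is a nonzero element of $\mathbf{R}$ since each $a_j \le p-1$. For this I would verify that among the monomials of $f$, only $m$ contributes to $M$: any monomial containing some $x_l$ with $l \notin \{i_1,\dots,i_k\}$ is annihilated by the zero-substitutions, while a monomial supported on $\{x_{i_1},\dots,x_{i_k}\}$ with exponents $(b_1,\dots,b_k)$ maps to $r^* \prod_j (\sum_l y_{j,l})^{b_j}$, whose $(j,*)$-block has total degree $b_j$; contributing to the squarefree degree-$d$ monomial $M$ therefore forces $b_j = a_j$ for all $j$ and hence $m^* = m$. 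The multinomial identity $(y_{j,1}+\dots+y_{j,a_j})^{a_j} = a_j!\, y_{j,1}\cdots y_{j,a_j} + (\text{non-squarefree terms})$ then supplies the claimed coefficient.

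To finish, I would rename the fresh variables to $x_1,\dots,x_d$ by further substitutions of the form $x_{N+l} \mapsto x_l$, and then use Definition \ref{DefPolyClonoid}(1) to scale by $(\prod_j a_j!)^{-1} \in \ZZ_p^*$, producing a polynomial $rx_1 \cdots x_d + g \in \langle\{f\}\rangle_{\mathbf{R}}$ satisfying the hypotheses of Lemma \ref{Lempclonoids}. The \emph{main obstacle} is the contribution-counting in the previous paragraph; it depends essentially on the hypothesis $f \in \mathbf{R}^*[X]$, which both ensures that $\prod_j a_j! \ne 0$ in $\ZZ_p$ and that $f_1$ remains in $\mathbf{R}^*[X]$, so that distinct exponent vectors correspond to distinct monomials.
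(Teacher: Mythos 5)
Your proof is correct and achieves the desired reduction to Lemma~\ref{Lempclonoids}, but it takes a genuinely different route than the paper. The paper spreads the degree one unit at a time: starting from the monomial $r\mathbfsl{x}^{\mathbfsl{s}}$ of degree $d$, it repeatedly applies the single substitution $x_j \mapsto x_j + x_{m+1}$ to a variable with exponent $u>1$, extracts the coefficient $u$ (nonzero since $u\le p-1$), rescales by $u^{-1}$, and inducts on the number of variables $|\Var|$ until it reaches $d$. You instead perform one bulk substitution --- each $x_{i_j}$ is replaced by a sum of $a_j$ fresh variables, all other variables are zeroed out --- and read off the coefficient $r\prod_j a_j!$ of the fully squarefree monomial via the multinomial theorem in a single step. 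The two approaches use the same underlying invertibility fact (factorials of integers $\le p-1$, equivalently binomial coefficients $\binom{u}{1}=u$, are units in $\ZZ_p$), and both end with Lemma~\ref{Lempclonoids}; yours avoids the induction bookkeeping at the price of a slightly larger one-shot coefficient computation. One small expository inaccuracy: the reason the intermediate polynomial stays in $\mathbf{R}^*[X]$ is that every exponent $b_j$ occurring in $f$ already satisfies $b_j\le p-1$ (so $(\sum_l y_{j,l})^{b_j}$ has exponents $\le p-1$), not the fact that the number $a_j$ of fresh variables in each block is $\le p-1$; the latter is what guarantees $a_j!\neq 0$ in $\ZZ_p$. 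This does not affect the validity of the argument.
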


\begin{proof}
	Let $n = \MInd(f)$, let $f = \sum_{\mathbfsl{m} \in [p-1]_0^n}r_{\mathbfsl{m}}\mathbfsl{x}^{\mathbfsl{m}} \in \mathbf{R}^*[X]$, $C := \langle\{f\}\rangle_{\mathbf{R}}$, let $d := max(\TD(f))$, and let $m \in \MON(f)$ with coefficient  $r \in \mathbf{R}$ and  $\tD(m) = d$. 
	
	Case $d = 0$: then clearly $r \in \langle\{f\}\rangle_{\mathbf{R}}$.
	
	Case $d > 0$: let us fix an $\mathbfsl{s} \in \DEGS(f)$ with $\langle \mathbfsl{s},\mathbf{1}_n\rangle = d$ and let $r$ be the coefficient of $\mathbfsl{x}^{\mathbfsl{s}}$ in $f$. Now we show by induction that for all $m \in \NN$ with $|\Var(\mathbfsl{x}^{\mathbfsl{s}})| \leq m \leq d$ there exists a monomial $h := r\mathbfsl{x}^{\mathbfsl{t}} \in \mathbf{R}^*[X]$ with $|\Var(h)| = m$, $\tD(h) = d$, and there exists $g \in \mathbf{R}^*[X]$ with $max(\TD(g)) \leq d$ and $\mathbfsl{t} \not\in \DEGS(g)$ such that $h + g \in C$.
	
	Base step $m = |\Var(\mathbfsl{x}^{\mathbfsl{s}})|$: in this case we see that we can take $h = r\mathbfsl{x}^{\mathbfsl{s}}$ and  $g= \sum_{\mathbfsl{m} \in [p-1]_0^n\backslash \{\mathbfsl{s} \}} r_{\mathbfsl{m}}\mathbfsl{x}^{\mathbfsl{m}}$.
	
	Induction step: suppose that the claim holds for $m < d$. Then there exists a monomial $h := r\mathbfsl{x}^{\mathbfsl{t}} \in \mathbf{R}^*[X]$ and $g' \in \mathbf{R}^*[X]$ with $|\Var(h)| = m$, $\tD(h) = d$, $max(\TD(g')) \leq d$, and $\mathbfsl{t} \not\in \DEGS(g')$ such that $h + g' \in C$. Then there exists $j$ such that $p-1\geq(\mathbfsl{t})_j = u > 1$, since $m < d$. Without loss of generality we suppose that $\Var(h) = \{x_1,\dots,x_m\}$. Let $g'' := g' \circ_{(j)} (x_j + x_{m+1})$. We denote by $\mathbfsl{y}$ the vector of variables $(x_1,\dots,x_{m+1})$. Thus 
	
	\begin{equation}
	\begin{split}
	(h+g') \circ_{(j)} (x_j + x_{m+1}) =& r\mathbfsl{x}^{\mathbfsl{t}}\circ(x_j + x_{m+1})+ g' \circ_{(j)} (x_j + x_{m+1})
	\\=&r( \sum_{k=0}^u {u \choose k}x^{u-k}_jx_{m+1}^k)  \cdot \prod_{i=1,i\not=j}^s(\mathbfsl{x})_i^{(\mathbfsl{t})_i} + g''
	\\=&r \cdot (\mathbfsl{t})_j \cdot \mathbfsl{y}^{((\mathbfsl{t})_1,\dots,(\mathbfsl{t})_{j-1},(\mathbfsl{t})_j -1,(\mathbfsl{t})_{j+1},\dots, (\mathbfsl{t})_m, 1)} +
	\\+&r( \sum_{k=0,k\not=1}^u{u\choose k}x^{u-k}_jx_{m+1}^k)\cdot \prod_{i=1,i\not=j}(\mathbfsl{x})_i^{(\mathbfsl{t})_i} + g''.
	\end{split}
	\end{equation}
	Let 
	
	\begin{equation}
	\begin{split}
	&h :=r \cdot \mathbfsl{y}^{((\mathbfsl{t})_1,\dots,(\mathbfsl{t})_{j-1},(\mathbfsl{t})_j -1,(\mathbfsl{t})_{j+1},\dots, (\mathbfsl{t})_m, 1)}
	\\&g= (\mathbfsl{t})_j^{-1}r( \sum_{k=0,k\not=1}^u{u\choose k}x^{u-k}_jx_{m+1}^k)\cdot \prod_{i=1,i\not=j}(\mathbfsl{x})_i^{(\mathbfsl{t})_i} + (\mathbfsl{t})_j^{-1}g''.
	\end{split}
	\end{equation}
	Then $h$ satisfies $\tD(h) = d$, $|\Var(h)| = m+1$. Furthermore, $g$ satisfies $max(\TD(g))$ $ \leq d$ and $((\mathbfsl{t})_1,\dots,(\mathbfsl{t})_{j-1},(\mathbfsl{t})_j -1,(\mathbfsl{t})_{j+1},\dots, (\mathbfsl{t})_m, 1) \not\in \DEGS(g)$. Thus $h$ and $g$ are the searched polynomials. This concludes the induction step.
	
	Thus, by Definition \ref{DefPolyClonoid}, $rx_1 \dots x_d + g''' \in C$ for some $g''' \in \mathbf{R}^*[X]$ with $max(\TD(g''') )\leq d$. By Lemma \ref{Lempclonoids} we have that $rx_1 \dots x_d \in C$ and the claim holds.
\end{proof}

We are now ready to prove that an $\mathbf{R}$-polynomial linearly closed clonoid generated by an element of $\mathbf{R}^*[X]$ contains every monomial of every polynomial in it.

\begin{Lem}
	\label{LemMoninZpq}
	Let $f \in \mathbf{R}^*[X]$ be such that $h = r_{\mathbfsl{m}}\mathbfsl{x}^{\mathbfsl{m}}$ is a monomial of $f$. Then $h \in \langle f \rangle_{\mathbf{R}}$.
\end{Lem}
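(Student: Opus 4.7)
The plan is to isolate the monomial $h = r_{\mathbfsl{m}}\mathbfsl{x}^{\mathbfsl{m}}$ from $f$ by a Lagrange-interpolation (Vandermonde) argument applied one variable at a time, using only the two closure operations from Definition \ref{DefPolyClonoid}. Set $n := \MInd(f)$ and write $f = \sum_{\mathbfsl{k} \in [p-1]_0^n} r_{\mathbfsl{k}}\mathbfsl{x}^{\mathbfsl{k}}$; the goal is to produce the specific summand $r_{\mathbfsl{m}}\mathbfsl{x}^{\mathbfsl{m}}$ inside $\langle\{f\}\rangle_{\mathbf{R}}$.

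For each $j \in [n]$ and each $a \in \ZZ_p$, Definition \ref{DefPolyClonoid}(2), applied with $l = j$, $s = j$, and the substitution vector $\mathbfsl{a}$ that has $a$ in position $j$ and zeros elsewhere, places the polynomial $f^{(j,a)} := f \circ_j (a x_j)$ into $\langle\{f\}\rangle_{\mathbf{R}}$; each monomial $r_{\mathbfsl{k}}\mathbfsl{x}^{\mathbfsl{k}}$ of $f$ appears in $f^{(j,a)}$ rescaled by the factor $a^{k_j}$. Since the $p$ elements of $\ZZ_p$ are pairwise distinct, the Vandermonde matrix $(a^k)_{a,k \in \ZZ_p}$ is invertible over $\ZZ_p$, so there exist scalars $c^{(j)}_a \in \ZZ_p$ with $\sum_{a \in \ZZ_p} c^{(j)}_a a^k = \delta_{k, m_j}$ for every $k \in [p-1]_0$. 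By Definition \ref{DefPolyClonoid}(1), the polynomial $g^{(j)} := \sum_{a \in \ZZ_p} c^{(j)}_a f^{(j,a)}$ also lies in $\langle\{f\}\rangle_{\mathbf{R}}$, and by construction the coefficient of $\mathbfsl{x}^{\mathbfsl{k}}$ in $g^{(j)}$ equals $r_{\mathbfsl{k}} \cdot \delta_{k_j, m_j}$, so only those monomials of $f$ whose $x_j$-exponent equals $m_j$ survive.

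Iterating this filter from $j = n$ down to $j = 1$, feeding the output of each step as the input to the next (and harmlessly skipping the filter for any $j$ with $m_j = 0$ for which $x_j$ no longer appears in the current polynomial, since in that case the desired condition $k_j = m_j = 0$ already holds automatically), one ends with a polynomial in $\langle\{f\}\rangle_{\mathbf{R}}$ whose unique remaining summand has exponent vector exactly $\mathbfsl{m}$, namely $h$. The one point that requires any care is the invertibility of the Vandermonde matrix over the prime field $\ZZ_p$, which is automatic from $p$ being prime; beyond that the proof is direct bookkeeping with Definitions \ref{DefPolyClonoid}(1) and (2), and no substantive obstacle arises.
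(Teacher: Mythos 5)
Your proof is correct, and it takes a genuinely different route from the paper's. The paper proves the lemma by induction on the number of monomials of $f$: at each step it extracts a monomial of maximum total degree $d$, invokes Lemma~\ref{LemFoundPcloni} (which itself rests on Lemma~\ref{Lempclonoids}) to show that $r_h x_1\cdots x_d$ lies in $\langle f\rangle_{\mathbf{R}}$, recovers $h$ from that by identifying variables, and then passes to $f-h$. Your argument instead isolates an \emph{arbitrary} monomial directly, one variable at a time, via a Vandermonde interpolation: the rescaling $x_j\mapsto a x_j$ is an instance of Definition~\ref{DefPolyClonoid}(2) with $l=s=j$, the linear combination $\sum_a c^{(j)}_a f^{(j,a)}$ is allowed by Definition~\ref{DefPolyClonoid}(1), and the invertibility of the $p\times p$ Vandermonde matrix over $\ZZ_p$ is exactly what is needed to project onto the stratum $k_j=m_j$. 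Your bookkeeping around the constraint $l\leq\MInd$ (skipping indices $j$ where $x_j$ has disappeared, noting that then necessarily $m_j=0$) is accurate. What each approach buys: yours is self-contained, avoids any appeal to Lemmas~\ref{Lempclonoids} and~\ref{LemFoundPcloni}, and does not need to order monomials by degree; the paper's approach, while more roundabout here, reuses machinery that it needs anyway elsewhere (Lemma~\ref{LemFoundPcloni} is invoked again in Lemma~\ref{Lemfcontmon} to pass from $r\mathbfsl{x}^{\mathbfsl{m}}$ to $rx_1\cdots x_d$, a step your Vandermonde argument does not replace).
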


\begin{proof}
	The proof is by induction on the number $n$ of monomials in $f$.
	
	Base step $n = 1$: then clearly the claim holds.
	
	Induction step $n>0$: suppose that the claim holds for every $g$ with $n-1$ monomials. Let $f$ be a polynomial with $n$ monomials with non-zero coefficients. Let $d = max(\TD(f))$ and let $h$ be a monomial in $f$ with degree $d$ and coefficient $r_h$. By Lemma \ref{LemFoundPcloni}, we have that either $r_{h}x_1\cdots x_d \in \langle f \rangle_{\mathbf{R}}$ if $d >0$ or $r_{h} \in \langle f \rangle_p$ if $d =0$. Clearly, this yields $h \in \langle f \rangle_{\mathbf{R}}$. From the induction hypothesis we have that all $n-1$ monomials of $f -h$ are in $\langle f -h \rangle_{\mathbf{R}} \subseteq \langle f  \rangle_{\mathbf{R}}$. Thus all monomials of $f$ are in $\langle f  \rangle_{\mathbf{R}}$.
\end{proof}

For every polynomial $f \in (\ZZ_p^{\ZZ_q^n})^*[X]$ with $\MInd(f) = k$ of the form $f =\sum_{\mathbfsl{m} \in [p-1]_0^k}$ $r_{\mathbfsl{m}}\mathbfsl{x}^{\mathbfsl{m}}$ we define its $s$-ary \emph{induced function} \index{$\overline{f}^{[s]}$}$\overline{f}^{[s]}: \ZZ_p^s \times \ZZ_q^s \rightarrow \ZZ_p \times \ZZ_q$ by:

\begin{center}
	$(\mathbfsl{x},\mathbfsl{y}) \mapsto (\sum_{\mathbfsl{m} \in [p-1]_0^k} r_{\mathbfsl{m}}((\mathbfsl{y})_1,\dots,(\mathbfsl{y})_n)\prod_{i =1}^k(\mathbfsl{x})_i^{(\mathbfsl{m})_i},0)$,
\end{center}
with $s \geq k,n$. From now on, when not specified, $s =  max(\{\MInd(f),n\})$. Next we show a lemma that connects the monomials of an $\mathbf{R}$-polynomial linearly closed clonoid to functions of a clones on $\ZZ_{pq}$.

\begin{Lem}
	\label{LemConnInduced}
	Let $p$ and $q$ be distinct prime numbers. Let $\mathbf{R} = \ZZ_p^{\ZZ_q^n}$ and let $h,h_1 \in \mathbf{R}^*[X]$ with $h \in \langle h_1 \rangle_{\mathbf{R}}$. Then $\overline{h} \in \Clg(\{\overline{h_1}\})$.
\end{Lem}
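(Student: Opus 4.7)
The strategy is to characterise $\langle h_1\rangle_{\mathbf R}$ as an iterated closure and then lift each generating step to a composition that is available in the clone generated by $\overline{h_1}$ together with $\Clo(\langle\ZZ_{pq},+\rangle)$ (the natural ambient interpretation of $\Clg(\{\overline{h_1}\})$ inside the lattice $\mathbf{Clo}^{\mathcal L}(\langle\ZZ_{pq},+\rangle)$ this paper works in). Set $Y_0 := \{h_1\}$ and, for $n \in \NN$, let $Y_{n+1}$ consist of $Y_n$ together with all polynomials $af+bg$ with $f,g \in Y_n$, $a,b\in\ZZ_p$, and all polynomials $f\circ_l(\sum_{i=1}^s a_ix_i)$ with $f \in Y_n$, $l\leq\MInd(f)$, $s \in \NN$, and $\mathbfsl a \in \ZZ_p^s$. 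An argument identical to the one used in Lemma~\ref{Lemclogen-2} yields $\langle h_1\rangle_{\mathbf R} = \bigcup_{n \in \NN} Y_n$, so it suffices to prove by induction on $n$ that $\overline{f} \in \Clg(\{\overline{h_1}\})$ for every $f \in Y_n$.

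The base case $n=0$ is immediate. For the inductive step assume the claim for $Y_n$. If $f = af' + bg$ with $a,b\in\ZZ_p$ and $f',g \in Y_n$, then $\overline{f'}$ and $\overline{g}$ take values in $\ZZ_p \times \{0\}$, so their $\ZZ_{pq}$-sum coincides with the pointwise $\ZZ_p$-sum on the first coordinate while keeping the second coordinate at $0$; iterating this $a$, resp.\ $b$, times (with $a,b$ viewed as integers in $\{0,\ldots,p-1\}$) produces $a\overline{f'} + b\overline{g} = \overline{f}$, which therefore lies in $\Clg(\{\overline{h_1}\})$ by the inductive hypothesis. If instead $f = g\circ_l(\sum_{i=1}^s a_ix_i)$ with $g \in Y_n$ and $\mathbfsl a \in \ZZ_p^s$, then $\overline{f}$ is realised as a single clone composition of $\overline{g}$ with three kinds of ``lifting'' functions: the linear-combination function $(\mathbfsl x,\mathbfsl y)\mapsto(\sum_{i=1}^s a_ix_i,0)$ plugged into the $l$-th polynomial-variable slot of $\overline{g}$, the projections $(\mathbfsl x,\mathbfsl y)\mapsto(x_i,0)$ in the remaining polynomial-variable slots, and the projections $(\mathbfsl x,\mathbfsl y)\mapsto(0,y_j)$ in the $n$ coefficient slots. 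By Remark~\ref{RemLinComb} each of these auxiliary functions already belongs to $\Clo(\langle\ZZ_{pq},+\rangle)$, so the resulting composition produces $\overline{f}$ inside $\Clg(\{\overline{h_1}\})$.

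The only real obstacle I anticipate is arity bookkeeping. The induced function $\overline{h}^{[s]}$ depends on the choice of $s \geq \max(\MInd(h),n)$, and the substitution step in case~(2) can enlarge $\MInd$. The clean remedy is to fix once and for all a sufficiently large $s^*$ that dominates every arity occurring in the finite induction and to regard every $\overline{f}$ that appears as an $s^*$-ary function by padding with trivial projections; then every step above becomes a literal clone composition of $s^*$-ary functions on $\ZZ_{pq}$, the use of $\circ_l$ is unambiguous, and no further arithmetic with arities is required.
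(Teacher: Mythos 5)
Your argument follows the same route as the paper's: both observe that the two closure operations of Definition~\ref{DefPolyClonoid} are simulated in any clone above $\Clo(\langle\ZZ_{pq},+\rangle)$ by adding induced functions and by composing them on the right with the linear mappings $h_{(\mathbfsl a,\mathbfsl b)}$ of Remark~\ref{RemLinComb}; your explicit $Y_n$-filtration merely spells out the induction that the paper leaves implicit.

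There is, however, a slip in your description of the case-(2) composition. In the paper's convention $\overline{g}^{[s]}$ reads the polynomial variable $x_i$ from the $\ZZ_p$-coordinate of slot $i$ (for $i \leq k$) and the coefficient input $y_j$ from the $\ZZ_q$-coordinate of slot $j$ (for $j \leq n$), so the ``polynomial-variable slots'' and the ``coefficient slots'' are not disjoint, and padding with fictitious variables does not relocate them. Plugging $(\mathbfsl x,\mathbfsl y)\mapsto(\sum_i a_ix_i,0)$, or $(\mathbfsl x,\mathbfsl y)\mapsto(x_i,0)$, into a slot $j\leq n$ would then force $y_j=0$ and evaluate the coefficient functions $r_{\mathbfsl m}$ at the wrong argument. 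The remedy is to pass the $\ZZ_q$-coordinate through: into slot $l$ plug $(\mathbfsl x,\mathbfsl y)\mapsto(\sum_i a_ix_i,\ y_l)$, and into every other slot $j$ plug $(\mathbfsl x,\mathbfsl y)\mapsto(x_j,\ y_j)$. These are exactly the maps $h_{(\mathbfsl a,\mathbfsl b)}$ of Remark~\ref{RemLinComb}, and the resulting composition equals $\overline{g\circ_l(\sum_i a_i x_i)}$. Two smaller points: $(\mathbfsl x,\mathbfsl y)\mapsto(x_i,0)$ and $(\mathbfsl x,\mathbfsl y)\mapsto(0,y_j)$ are not projections on $\ZZ_{pq}$ but linear maps such as $q^{p-1}\pi_i$ and $p^{q-1}\pi_j$; and $s^*$ should be chosen relative to the finite derivation of the particular $h$ at hand, since the induction itself ranges over all of $\NN$.
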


\begin{proof}
	We know that every clone $C$ containing $\Clo(\langle \ZZ_{pq},+\rangle)$ is closed under composition and, by Remark \ref{RemLinComb}, contains every linear mapping $h_{(\mathbfsl{a},\mathbfsl{b})}$ with $\mathbfsl{a} \in \ZZ_p^n$ and $\mathbfsl{b} \in \ZZ_q^n$. Then it is clear that if a monomial $h$ can be generated from $h_1$ with item $(1)$ or $(2)$ of Definition \ref{DefPolyClonoid}, then the induced monomial $\overline{h}_1$ generates $\overline{h}$ in a clone containing $\Clo(\langle \ZZ_{pq},+\rangle)$, simply composing $\overline{h}_1$ with the linear mappings $h_{(\mathbfsl{a},\mathbfsl{b})}$ from the right, with $\mathbfsl{a} \in \ZZ_p^n$ and $\mathbfsl{b} \in \ZZ_q^n$, and with the sum from the left.
\end{proof}

In order to show the following lemma we want to make an easy observation. Let $h = r\mathbfsl{x}^{\mathbf{1}_d} \in \mathbf{R}^*[X]$ be a monomial, where with \index{$\mathbfsl{x}^{\mathbf{1}_d}$}$\mathbfsl{x}^{\mathbf{1}_d}$ we denote the monomial $x_1\cdots x_{d}$ and let $d \in \NN\backslash\{1\}$. Furthermore, let $\mathbf{R} = \ZZ_p^{\ZZ_q^n}$. Relabelling the variables and composing $\overline{r\mathbfsl{x}^{\mathbf{1}_d}}$ with linear mappings and itself we can observe that $\Clg(\{\overline{r\mathbfsl{x}^{\mathbf{1}_d}}\})\supseteq \{\overline{r\mathbfsl{x}^{\mathbf{1}_d}}, \overline{r^2\mathbfsl{x}^{\mathbf{1}_{d + d-1}}}, \dots, \overline{r^k\mathbfsl{x}^{\mathbf{1}_{d + (k-1)(d-1)}}},\dots\}$. We have that $r^p = r$ modulo $p$, thus some of these monomials have the same coefficients. Moreover, if we substitute $x_1$ to $p$ variables we obtain that $p-1$ variables are cancelling, since $x_i^p$ and $x_i$ induce the same function and have the same representative in $\mathbf{R}^*[X]$. With these easy observations we can prove the following.

\begin{Lem}
	\label{LemMonVeri}
	Let $p$ and $q$ be distinct prime numbers. Let $d \in \NN\backslash\{1\}$. Then for all $k,l \in \NN$, for all $r \in \ZZ_p^{\ZZ_q^l}$, and for all $\mathbfsl{m} \in [p-1]_0^k\backslash\{\mathbf{0}_k\}$ with $\tD(\mathbfsl{x}^{\mathbfsl{m}})  = u$ congruent to $d$ modulo $p-1$ it follows that:
	\begin{center}
		$\overline{r\mathbfsl{x}^{\mathbfsl{m}}}\in \Clg(\{\overline{rx_1\cdots x_d}\})$.
	\end{center}

\end{Lem}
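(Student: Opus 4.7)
The plan is to construct $\overline{r\mathbfsl{x}^{\mathbfsl{m}}}$ from $\overline{rx_1\cdots x_d}$ in three stages. First, by iterated composition of $\overline{rx_1\cdots x_d}$ with itself, I will obtain, for each $t\ge 1$, a function $g_t:=\overline{r^t\,x_1 x_2\cdots x_{1+t(d-1)}}$ in the clone, where $r^t$ denotes the pointwise $t$-th power $\mathbfsl{y}\mapsto r(\mathbfsl{y})^t$. Second, choosing $t\equiv 1\pmod{p-1}$, Fermat's little theorem will force $r^t=r$, leaving $g_t=\overline{rx_1\cdots x_N}$ of degree $N=1+t(d-1)\equiv d\pmod{p-1}$ with $N$ as large as desired. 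Third, I will collapse the $N$ variables into the target pattern $\mathbfsl{m}$ by substituting linear maps of the form $h_{(e_a,e_b)}:(\mathbfsl{x},\mathbfsl{y})\mapsto(u_a,v_b)$ (available via Remark \ref{RemLinComb}), using the identity $x^p=x$ on $\ZZ_p$ to reduce exponents modulo $p-1$.

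For the first stage I proceed by induction on $t\ge 1$; the case $t=1$ is the hypothesis. In the inductive step I compose an outer copy of $\overline{rx_1\cdots x_d}$, plugging $g_t$ (extended to the appropriate arity) into its $d$-th slot and filling the remaining slots with suitable linear maps $h_{(e_a,e_b)}$. The main technical obstacle is that $g_t$, like $\overline{rx_1\cdots x_d}$ itself, always returns $0$ in its $\ZZ_q$-coordinate, so if $d\le l$ then inserting it at slot $d$ would corrupt the outer $r$'s $d$-th argument to $0$. I resolve this by adding the linear map $h_{(\mathbf{0},e_d)}:(\mathbfsl{x},\mathbfsl{y})\mapsto(0,v_d)$ to the inserted copy of $g_t$; since $g_t$'s $\ZZ_q$-part is identically $0$, the $\ZZ_p$-part is unchanged while the $\ZZ_q$-coordinate is restored to $v_d$. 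Choosing the other linear maps so that both inner $g_t$'s $r$ and outer $\overline{rx_1\cdots x_d}$'s $r$ receive $(v_1,\dots,v_l)$, and so that $d-1$ fresh $\ZZ_p$-variables $u_1,\dots,u_{d-1}$ are introduced alongside the inner variables $u_d,\dots,u_{d+t(d-1)}$, a direct computation gives $g_{t+1}=\overline{r^{t+1}\,x_1\cdots x_{1+(t+1)(d-1)}}$.

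For the remaining stages I take $t=1+j(p-1)$ with $j\ge 0$; then the pointwise identity $r^p=r$ yields $r^t=r$, so $g_t=\overline{rx_1\cdots x_N}$ with $N=d+j(p-1)(d-1)$. Picking $j$ large enough forces $N\ge u$, and since $N\equiv d\equiv u\pmod{p-1}$ I can write $N-u=e(p-1)$ for some $e\ge 0$. Fix an index $i_0$ with $m_{i_0}\ge 1$ (which exists since $\mathbfsl{m}\ne\mathbf{0}_k$) and set $n_{i_0}:=m_{i_0}+e(p-1)$ and $n_i:=m_i$ for $i\ne i_0$; then $\sum_{i=1}^k n_i=N$, $n_i\equiv m_i\pmod{p-1}$, and $n_i=0$ iff $m_i=0$. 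I then compose $g_t$ with $N$ linear maps $H_1,\dots,H_N$: for $j\in[l]$ take $H_j=h_{(e_{a_j},e_j)}$ to feed $v_j$ to the $j$-th argument of $r$, and for $j>l$ take $H_j=h_{(e_{a_j},\mathbf{0})}$, where the sequence $(a_1,\dots,a_N)$ takes each value $i\in[k]$ exactly $n_i$ times. The resulting function is $\overline{r\,u_1^{n_1}\cdots u_k^{n_k}}$ as a polynomial, and the identity $u^p=u$ on $\ZZ_p$ reduces each $u_i^{n_i}$ to $u_i^{m_i}$ as a function (for $m_i\ge 1$ via $u^{m+\lambda(p-1)}=u^m$, and automatically when $m_i=0$ since then $n_i=0$), yielding $\overline{r\mathbfsl{x}^{\mathbfsl{m}}}$, as required.
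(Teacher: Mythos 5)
Your proof is correct, and the core technical ideas coincide with the paper's: iterated self-composition of $\overline{rx_1\cdots x_d}$ to raise both the degree and the power of $r$, Fermat's identity $r^p=r$ (pointwise) to normalize the coefficient back to $r$, and $x^p=x$ on $\ZZ_p$ to collapse excess variables. The main packaging difference is that the paper first reduces to $\overline{r\mathbfsl{x}^{\mathbfsl{m}}}\in\Clg(\{\overline{rx_1\cdots x_{d'}}\})$ with $d'=u$ via Lemma~\ref{LemConnInduced} and then handles the two cases $d'<d$ and $d'>d$ separately, whereas you avoid both that lemma and the case split by a uniform strategy (always inflate until $N\ge u$ and only then collapse to the target exponent pattern $\mathbfsl{m}$). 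Your treatment is also more explicit than the paper's one-line "relabelling the variables and composing $\overline{r\mathbfsl{x}^{\mathbf{1}_d}}$ with linear mappings and itself": in particular, the additive correction by $h_{(\mathbf{0},e_d)}$ to restore the $\ZZ_q$-coordinate that the inner copy would otherwise zero out is a genuine bookkeeping issue that the paper leaves implicit, and you handle it cleanly. Net effect: same route, more self-contained and careful execution.
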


\begin{proof}
	
	Let $p$ be a prime number and let $d \in \NN\backslash\{1\}$. Clearly, $r\mathbfsl{x}^{\mathbfsl{m}} \in \langle rx_1\cdots x_{d'} \rangle_{\mathbf{R}}$, where $d' = \tD(\mathbfsl{x}^{\mathbfsl{m}})$, hence, by Lemma \ref{LemConnInduced}, we have that $\overline{r\mathbfsl{x}^{\mathbfsl{m}}}\in \Clg(\{\overline{rx_1\cdots x_{d'}}\})$.
	
	Next we prove that $\Clg(\{\overline{rx_1\cdots x_{d'}}\}) \subseteq \Clg(\{\overline{rx_1\cdots x_d}\})$. If $d' < d$ then, in order to generate $\overline{rx_1\cdots x_{d'}}$ we compose $\overline{rx_1\cdots x_d}$ with a linear mapping that is the first projection over $\ZZ_p$ and is the identity over $\ZZ_q$. In this way we collapse the variables $\{x_{d'+1},\dots,x_d\}$, since $x_1^{p} = x_1$ modulo $p$ and $d' = d$ modulo $p-1$.
	
	If $d' > d$, then we compose $\overline{rx_1\cdots x_d}$ to itself relabelling the variables and we generate the induced monomials $\{\overline{r\mathbfsl{x}^{\mathbf{1}_d}}, \overline{r^2\mathbfsl{x}^{\mathbf{1}_{d + d-1}}}, \dots, \overline{r^k\mathbfsl{x}^{\mathbf{1}_{d + (k-1)(d-1)}}},\dots\}$. We know that $r^p = r$ and thus in particular we generate $\{\overline{r\mathbfsl{x}^{\mathbf{1}_d}}, \overline{r\mathbfsl{x}^{\mathbf{1}_{d + (p-1)d}}}, \dots, $ $\overline{r\mathbfsl{x}^{\mathbf{1}_{d + c(p-1)d}}},\dots\}$ for all $c \in \NN$. For a suitable $k \in \NN$ we have that $d + k(p-1)d > d'$ and thus $\Clg(\{\overline{rx_1\cdots x_d}\}) \supseteq \Clg(\{\overline{rx_1\cdots x_{d + k(p-1)d}}\}) \supseteq \Clg(\{\overline{rx_1\cdots x_{d'}}\})$ and the claim holds.
	
\end{proof}

\begin{Lem}
	\label{Lemfcontmon}
	Let $p$ and $q$ be distinct prime numbers. Let $n \in \NN$, let $f: \ZZ_p^n \times \ZZ_q^n \rightarrow \ZZ_p \times \ZZ_q$ be an $n$-ary function, and let $g = q^{p-1}f$. Let $h \in \mathbf{R}^*[X]$ be such that $\mathbf{R} =\ZZ_p^{\ZZ_q^n}$ and $\overline{h} = g$. Let $h'$ be a monomial of $h$ with coefficient $r$ and $d = \tD(h')$. Then it follows that:
	
	\begin{enumerate}
		\item [(1)] if $d=0$, then $\overline{r}\in \Clg(\{f\})$;
		\item [(2)] if $d>0$, then $\overline{rx_1\cdots x_d}\in \Clg(\{f\})$.
	\end{enumerate}
\end{Lem}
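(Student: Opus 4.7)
The plan is to first realize the given polynomial representative $h$ of $g=q^{p-1}f$ as an element obtainable from $f$ in the clone, then extract the chosen monomial via the $\mathbf{R}$-polynomial linearly closed clonoid machinery already built up, and finally pass back to the induced functions via Lemma~\ref{LemConnInduced}.

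First I would observe that $g=q^{p-1}f$ actually lies in $\Clg(\{f\})$: since this clone contains $\Clo(\langle\ZZ_{pq},+\rangle)$, integer scalar multiplication is available by iterated addition, so $q^{p-1}f\in\Clg(\{f\})$. Furthermore, by Fermat's little theorem $q^{p-1}\equiv 1\pmod p$ and $q^{p-1}\equiv 0\pmod q$, so $g(\mathbfsl{x},\mathbfsl{y})=(f_p(\mathbfsl{x},\mathbfsl{y}),0)$. This explains why a polynomial $h\in\mathbf{R}^*[X]$ with $\mathbf{R}=\ZZ_p^{\ZZ_q^n}$ and $\overline{h}=g$ exists (the coefficients collect the $\ZZ_q^n$-dependent data, as in Lemma~\ref{Lem2Genexprofa}), and we have $\overline{h}=g\in\Clg(\{f\})$.

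Next I would isolate the chosen monomial. By Lemma~\ref{LemMoninZpq}, every monomial of $h$ belongs to $\langle h\rangle_{\mathbf{R}}$; in particular $h'=r\mathbfsl{x}^{\mathbfsl{m}}\in\langle h\rangle_{\mathbf{R}}$. Applying Lemma~\ref{LemConnInduced} we obtain
\begin{equation}
\overline{h'}\in\Clg(\{\overline{h}\})=\Clg(\{g\})\subseteq\Clg(\{f\}).
\end{equation}

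Finally I would reduce $h'$ to the normalized shape $rx_1\cdots x_d$. Viewing $h'$ itself as a one-monomial polynomial in $\mathbf{R}^*[X]$, we trivially have $\max(\TD(h'))=d$, and Lemma~\ref{LemFoundPcloni} applied to $h'$ yields $r\in\langle h'\rangle_{\mathbf{R}}$ if $d=0$ and $rx_1\cdots x_d\in\langle h'\rangle_{\mathbf{R}}$ if $d>0$. A second application of Lemma~\ref{LemConnInduced} then transfers this to the induced functions, giving $\overline{r}\in\Clg(\{\overline{h'}\})\subseteq\Clg(\{f\})$ in case (1) and $\overline{rx_1\cdots x_d}\in\Clg(\{\overline{h'}\})\subseteq\Clg(\{f\})$ in case (2).

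There is no real obstacle here: all heavy lifting has been absorbed into Lemmata~\ref{LemMoninZpq}, \ref{LemFoundPcloni} and \ref{LemConnInduced}. The only points meriting attention are (i) the passage $f\mapsto q^{p-1}f=g$, where the Fermat congruence ensures that multiplication by $q^{p-1}$ is a clone operation preserving the $\ZZ_p$-component while annihilating the $\ZZ_q$-component, and (ii) the small re-interpretation that lets us apply Lemma~\ref{LemFoundPcloni} to the single-monomial polynomial $h'$ rather than to $h$ itself, which is exactly what allows us to reach the normalized generator $rx_1\cdots x_d$ regardless of whether $d$ is the maximal total degree in $h$.
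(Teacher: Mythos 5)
Your proof is correct and takes essentially the same approach as the paper's, chaining Lemmata~\ref{LemMoninZpq}, \ref{LemFoundPcloni} (applied to the single-monomial polynomial $h'$), and \ref{LemConnInduced}. The only difference is that you make explicit the preliminary observation $g = q^{p-1}f \in \Clg(\{f\})$, which the paper leaves implicit.
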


\begin{proof}
	let $n$, $h$, and let $f$ be as in the hypothesis. By Lemma \ref{LemFoundPcloni}, we have that if $d=0$, then $r \in \langle h' \rangle_{\mathbf{R}}$ and if $d>0$, then $rx_1\cdots x_d \in \langle h' \rangle_{\mathbf{R}}$. By Lemma \ref{LemMoninZpq}, $h'\in \langle h \rangle_{\mathbf{R}}$ and thus, by Lemma \ref{LemConnInduced}, if $d=0$, then $\overline{r}\in \Clg(\{f\})$ holds, and if $d>0$, then $\overline{rx_1\cdots x_d}\in \Clg(\{f\})$.
\end{proof}

We are now ready to prove the main result of this section which allows us to provide a bound for the lattice of all clones containing $\Clo(\langle \ZZ_{pq},+\rangle)$.

\begin{proof}[Proof of Theorem \ref{Thmgeneralembedding}]
	Let $p$ and $q$ be distinct prime numbers. Then for all $1 \leq i \leq p$ and for all $1 \leq j \leq q$, we define $\rho_i: \Clo^{\mathcal{L}}(\langle \ZZ_{pq},+\rangle) \rightarrow \mathcal{L}(\ZZ_p,\ZZ_q)$ and $\psi_j: \Clo^{\mathcal{L}}(\langle \ZZ_{pq},+\rangle) \rightarrow \mathcal{L}(\ZZ_q,\ZZ_p)$ by:
	
	\begin{equation}
	\label{defembeGen}
	\begin{split}
	\rho_i(C) &:= \{f \mid \text{there exists } n \in \NN \text { s. t. }f:\ZZ_q^n \rightarrow \ZZ_p, \overline{fx_1\cdots x_i} \in C\}
	\\ \psi_j(C) &:= \{f \mid \text{there exists } n \in \NN \text { s. t. }f:\ZZ_p^n \rightarrow \ZZ_q, \overline{fx_1\cdots x_i} \in C\},
	\end{split}
	\end{equation}
	for all $C \in \Clo^{\mathcal{L}}(\langle \ZZ_{pq},+\rangle)$. Furthermore, we define  $\rho_0: \Clo^{\mathcal{L}}(\langle \ZZ_{pq},+\rangle) \rightarrow \mathcal{L}(\ZZ_p,\ZZ_q)$ and $\psi_0: \Clo^{\mathcal{L}}(\langle \ZZ_{pq},+\rangle) \rightarrow \mathcal{L}(\ZZ_q,\ZZ_p)$ by:
	
	\begin{equation}
	\label{defembeGen0}
	\begin{split}
	\rho_0(C) &:= \{f \mid \text{there exists } n \in \NN \text { s. t. }f:\ZZ_q^n \rightarrow \ZZ_p, \overline{f} \in C\}
	\\ \psi_0(C) &:= \{f \mid\text{there exists } n \in \NN \text { s. t. } f:\ZZ_p^n \rightarrow \ZZ_q, \overline{f} \in C\}.
	\end{split}
	\end{equation}
	Let $\rho: \mathcal{L}(\langle \ZZ_{pq},+\rangle)\rightarrow \mathcal{L}(\ZZ_p,\ZZ_q)^{p+1} \times \mathcal{L}(\ZZ_q,\ZZ_p)^{q+1}$ be defined by $\rho(C) = (\rho_0(C),\dots,$ $\rho_{p}(C),\psi_0(C),\dots,\psi_{q}(C))$ for all $C \in \Clo^{\mathcal{L}}(\langle \ZZ_{pq},+\rangle)$. 
	Next we prove that for all $0 \leq i \leq p$ and for all $0 \leq j \leq q$ $\rho_i$ and $\psi_j$ and thus $\rho$ are well-defined.
	
	Let $C \in \Clo^{\mathcal{L}}(\langle \ZZ_{pq},+\rangle)$. Then we have to prove that $\rho_i(C)$ is a $(\ZZ_p,\ZZ_q)$-linearly closed clonoid. To do so let $n \in \NN$, $f,g \in \rho_i(C)^{[n]}$ and $a, b \in \mathbb{Z}_p$. Then $\overline{fx_1\cdots x_i}, \overline{gx_1\cdots x_i}$ $ \in C$. From the closure with $+$ we have that $\overline{(af+bg)x_1\cdots x_i} \in C$ and thus $af + bg \in \rho_i(C)^{[n]}$ and item $(1)$ of Definition \ref{DefClo} holds. Furthermore, let $m,n \in \NN$, $f \in \rho_i(C)^{[m]}$, $A \in \mathbb{Z}^{m \times n}_q$ and let $g: \ZZ_{p}^n \rightarrow \ZZ_q$ be defined by:
	
	\begin{center}
		$g: (y_1,\dots,y_n) \mapsto f(A\cdot (y_1,\dots,y_n)^t)$.
	\end{center}
	Then, by definition of $\rho_i$, we have that $\overline{fx_1\cdots x_i} \in C^{[s]}$, where $s = max(\{i,m\})$. Let $k = max(\{i,n\})$ and let $h: \ZZ_p^k \times \ZZ_q^k \rightarrow  \ZZ_p^k \times \ZZ_q^k$ be a linear mapping such that $(x_1,\dots,x_k,y_1,\dots,y_k) \mapsto (
	x_1,\dots,x_k, A \cdot(y_1,\dots,y_n)^t,0_{\ZZ_q},\dots,0_{\ZZ_q})$. By definition of induced function we have that $\overline{gx_1\cdots x_i}= \overline{fx_1\cdots x_i} \circ h$ and hence $\overline{gx_1\cdots x_i} \in \Clg(\{$ $\overline{fx_1\cdots x_i}\})$ as composition of $\overline{fx_1\cdots x_i}$ and linear mappings. Thus $g \in \rho_i(C)$ which concludes the proof of item $(2)$ of Definition \ref{DefClo}. In the same way we can prove that $\rho_0$ is well-defined and $\psi_j$ is well-defined for all $0 \leq j \leq q$. Hence $\rho$ is well-defined. 
	
	Now we prove that $\rho$ is injective. Let $C, D \in \Clo^{\mathcal{L}}(\langle\ZZ_{pq},+\rangle)$ with $\rho(C) = \rho(D)$. Let $f \in C^{[n]}$ be such that $f$ satisfies for all $(\mathbfsl{x},\mathbfsl{y}) \in \ZZ_p^n \times \ZZ_q^n$:
	
	\begin{center}
		$f((\mathbfsl{x},\mathbfsl{y})) = (\sum_{\mathbfsl{m} \in [p-1]_0^n}f_{\mathbfsl{m}}\mathbfsl{x}^{\mathbfsl{m}}, \sum_{\mathbfsl{h} \in [q-1]_0^n}f_{\mathbfsl{h}}\mathbfsl{y}^{\mathbfsl{h}})$,
	\end{center}
	where $\{f_{\mathbfsl{m}}\}_{[p-1]_0^n}$ and $\{f_{\mathbfsl{h}}\}_{[q-1]_0^n}$ are sequences of functions respectively from $\ZZ_q^n$ to $\ZZ_p$ and from $\ZZ_p^n$ to $\ZZ_q$. Let $p_1 \in \mathbf{R}^*[X]$ be such that $\overline{p_1}^{[n]}  = q^{p-1}f$, where $\mathbf{R} = \ZZ_p^{\ZZ_q^n}$.

	Let $h = f_{\mathbfsl{m}}x^{\mathbfsl{m}}$ be a monomial of $p_1$ with $f_{\mathbfsl{m}} \not=0$, let $s = \tD(h)$. Let $d \in \NN_0$ be such that if $s \not= 0,1$, then $2 \leq d \leq p$ and $d=s$ modulo $p-1$. If $s = 0,1$, then $s = d$. We prove that $\overline{h} \in D$ by case distinction.
	
	Case $s=0,1$: from Lemma \ref{Lemfcontmon} it follows that $\overline{h} \in C$. By Definition (\ref{defembeGen0}), $f_{\mathbfsl{m}} \in \rho_{s}(C) = \rho_{s}(D)$ and thus $\overline{h} \in D$.
	
	Case $s>1$: by Lemma \ref{Lemfcontmon}, $C \supseteq \Clg(\{\overline{f_{\mathbfsl{m}}x_1\cdots x_s}\})$. Thus, by Lemma \ref{LemMonVeri}, $C \supseteq \Clg(\{\overline{f_{\mathbfsl{m}}x_1\cdots x_d}\})$ and thus $f_{\mathbfsl{m}} \in \rho_d(C) =  \rho_d(D)$. Hence $\overline{f_{\mathbfsl{m}}x_1\cdots x_d} \in D$ and, by Lemma \ref{LemMonVeri}, it follows that $\overline{f_{\mathbfsl{m}}\mathbfsl{x}^{\mathbfsl{m}}}  \in D$. This yields for a generic induced monomial in $q^{p-1}f$ and thus the function $q^{p-1}f \in D$. With the same strategy we can prove that $p^{q-1}f \in D$ and thus $f = p^{q-1}f + q^{p-1}f\in D$. Hence $C \subseteq D$. With the same proof we have the other inclusion and thus $\rho$ is injective.
	
\end{proof}

\begin{Cor}
	\label{Cor1}
	Let $p$ and $q$ be distinct prime numbers. Let $\prod_{i =1}^n p_i^{k_i}$ and $\prod_{i =1}^s r_i^{d_i}$ be the factorizations of respectively $g_1 = x^{q-1} -1$ in $\mathbb{Z}_p[x]$ and $g_2 = x^{p-1} -1$ in $\mathbb{Z}_q[x]$ into their irreducible divisors. Then the number $k$ of clones containing $\Clo(\langle\ZZ_{pq},+\rangle)$ is bounded by:
	
	\begin{equation}
	k \leq 2^{p+q+2}\prod_{i =1}^n(k_i +1)^{p+1}\prod_{i =1}^s(d_i +1)^{q+1}.
	\end{equation}
	
\end{Cor}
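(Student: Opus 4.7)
The plan is to combine the embedding of Theorem \ref{Thmgeneralembedding} with the cardinality count for linearly closed clonoids established in \cite[Theorem $1.3$]{Fio.CSOF}. Since the injection $\Clo^{\mathcal{L}}(\langle \ZZ_{pq},+\rangle)\hookrightarrow \mathcal{L}(\ZZ_p,\ZZ_q)^{p+1} \times \mathcal{L}(\ZZ_q,\ZZ_p)^{q+1}$ is already in hand, the problem reduces to bounding the cardinalities of the two clonoid lattices and taking a product.

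First I would quote the cardinality of $\mathcal{L}(\ZZ_p,\ZZ_q)$ from \cite[Theorem $1.3$]{Fio.CSOF}: writing $g_1 = x^{q-1}-1 = \prod_{i=1}^n p_i^{k_i}$ as a product of its irreducible divisors in $\ZZ_p[x]$, the number of $(\ZZ_p,\ZZ_q)$-linearly closed clonoids equals $2\prod_{i=1}^n(k_i+1)$. By the symmetric role of $p$ and $q$, the factorization $g_2 = x^{p-1}-1 = \prod_{i=1}^s r_i^{d_i}$ in $\ZZ_q[x]$ gives $|\mathcal{L}(\ZZ_q,\ZZ_p)| = 2\prod_{i=1}^s(d_i+1)$.

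Next I would apply Theorem \ref{Thmgeneralembedding} to obtain
\begin{equation*}
k \;\leq\; |\mathcal{L}(\ZZ_p,\ZZ_q)|^{p+1} \cdot |\mathcal{L}(\ZZ_q,\ZZ_p)|^{q+1} \;\leq\; \Bigl(2\prod_{i=1}^n(k_i+1)\Bigr)^{p+1} \cdot \Bigl(2\prod_{i=1}^s(d_i+1)\Bigr)^{q+1},
\end{equation*}
and then collect the factors of $2$: the right-hand side equals $2^{(p+1)+(q+1)}\prod_{i=1}^n(k_i+1)^{p+1}\prod_{i=1}^s(d_i+1)^{q+1} = 2^{p+q+2}\prod_{i=1}^n(k_i+1)^{p+1}\prod_{i=1}^s(d_i+1)^{q+1}$, which is the claimed bound.

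There is no real obstacle here, since the heavy lifting is done by Theorem \ref{Thmgeneralembedding} (which supplies the injection) and by \cite[Theorem $1.3$]{Fio.CSOF} (which counts the clonoids); the corollary is only an arithmetic rearrangement of their combination. The one place to be careful is to correctly invoke the clonoid count in both directions, i.e.\ to recognize that the roles of $(\ZZ_p,\ZZ_q)$-linearly closed clonoids and $(\ZZ_q,\ZZ_p)$-linearly closed clonoids are not symmetric inside the embedding target (they are raised to different powers $p+1$ and $q+1$), so the two factorizations must be assigned to their correct lattices.
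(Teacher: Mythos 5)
Your proposal is correct and follows exactly the paper's route: combine the injection of Theorem \ref{Thmgeneralembedding} with the clonoid count from \cite[Theorem $1.3$]{Fio.CSOF} and multiply. You have simply written out the arithmetic that the paper leaves implicit, and your caution about matching each factorization ($g_1$ in $\ZZ_p[x]$ for $\mathcal{L}(\ZZ_p,\ZZ_q)$, $g_2$ in $\ZZ_q[x]$ for $\mathcal{L}(\ZZ_q,\ZZ_p)$) to the correct exponent is exactly the right point to be careful about.
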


\begin{proof}
	The proof follows from the injective function of Theorem \ref{Thmgeneralembedding} and \cite[Theorem $1.3$]{Fio.CSOF}.
\end{proof}

We can see that in the worst case this bound is equal to $2^{2qp+q+p}$, when $g_1 = x^{q-1} -1$ in $\mathbb{Z}_p[x]$ and $g_2 = x^{p-1} -1$ in $\mathbb{Z}_q[x]$ can both be factorized with linear factors. There are many examples when this happens. Furthermore, a lower bound is given by the embedding of Theorem \ref{ThEmbClonoids}. 

\begin{proof}[Proof of Corollary \ref{Corfinale} ]
	
	The proof follows from Corollary \ref{Cor1} and Theorems \ref{ThEmbClonoids}, \cite[$1.3$]{Fio.CSOF}, observing the fact that we are counting two times the clone composed by all linear functions.
	
\end{proof}

With these two corollaries we have found a bound for the number of clones containing $\Clo(\langle\ZZ_{pq},+\rangle)$. With the next two results we can also find a concrete bound for the arity of the generators that we need to characterized these clones.

\begin{Thm}
	\label{Cor3}
	Let $p$ and $q$ be distinct prime numbers. Then a clone $C$ containing $\Clo(\langle\ZZ_{pq},+\rangle)$ is generated by the sets of functions:
	
	\begin{center}
		$L := \bigcup_{i=1}^p\{\overline{rx_1\cdots x_i} \mid r:\ZZ_q \rightarrow \ZZ_p, \overline{rx_1\cdots x_i} \in C\} \cup \{\overline{r} \mid r:\ZZ_q \rightarrow \ZZ_p, \overline{r} \in C\}$
		$R := \bigcup_{i=1}^q\{\overline{ry_1\cdots y_i} \mid r:\ZZ_p \rightarrow \ZZ_q, \overline{ry_1\cdots y_i} \in C\} \cup \{\overline{r} \mid r:\ZZ_p \rightarrow \ZZ_q, \overline{r} \in C\}$.
	\end{center}
	
\end{Thm}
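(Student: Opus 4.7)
The plan is to show $C = \Clg(L \cup R)$ by proving both inclusions. The inclusion $\Clg(L \cup R) \subseteq C$ is immediate from $L \cup R \subseteq C$, so I focus on the reverse. Take $f \in C$ of arity $n$. Since $q^{p-1}$ and $p^{q-1}$ are the orthogonal central idempotents of $\ZZ_{pq}$ (with $q^{p-1} \equiv 1 \pmod p$, $\equiv 0 \pmod q$, and symmetrically for $p^{q-1}$), the splitting $f = q^{p-1}f + p^{q-1}f$ writes $f$ as the sum of a $\ZZ_p$-valued function $g := q^{p-1}f$ and a $\ZZ_q$-valued function $h := p^{q-1}f$, both in $C$ because $\Clo(\langle\ZZ_{pq},+\rangle) \subseteq C$. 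By symmetry I argue only that $g \in \Clg(L \cup R)$ using $L$; the corresponding argument for $h$ is the same statement with $R$.

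By Lemma \ref{Lem2Genexprof}, $g = \sum_{\mathbfsl{m}} \overline{f_{\mathbfsl{m}} \mathbfsl{x}^{\mathbfsl{m}}}$ with $f_{\mathbfsl{m}}: \ZZ_q^n \to \ZZ_p$, so it suffices to place each induced monomial in $\Clg(L \cup R)$. Fix $\mathbfsl{m}$ and set $s := \tD(\mathbfsl{x}^{\mathbfsl{m}})$. Lemma \ref{Lemfcontmon} gives $\overline{f_{\mathbfsl{m}} x_1 \cdots x_s} \in \Clg(\{f\}) \subseteq C$ (or $\overline{f_{\mathbfsl{m}}} \in C$ when $s = 0$). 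When $s \geq 2$, Lemma \ref{LemMonVeri} both converts this into $\overline{f_{\mathbfsl{m}} x_1 \cdots x_d}$ for some $d \in \{2, \ldots, p\}$ with $d \equiv s \pmod{p-1}$ and reconstructs $\overline{f_{\mathbfsl{m}} \mathbfsl{x}^{\mathbfsl{m}}}$ from it inside the clone. The problem therefore reduces to showing $\overline{f_{\mathbfsl{m}} x_1 \cdots x_d} \in \Clg(L \cup R)$ for every $d \in \{0, 1, \ldots, p\}$ and every $f_{\mathbfsl{m}} \in \rho_d(C)$.

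For this final step I would invoke the classification of $(\ZZ_p, \ZZ_q)$-linearly closed clonoids from \cite[Theorems 1.2 and 1.3]{Fio.CSOF}, which asserts that each such clonoid---in particular $\rho_d(C)$---is generated as a $(\ZZ_p, \ZZ_q)$-linearly closed clonoid by its unary part $U_d := \{r : \ZZ_q \to \ZZ_p \mid \overline{r x_1 \cdots x_d} \in C\}$ (and analogously for $d = 0$ using $\{r \mid \overline{r} \in C\}$). Every element of $U_d$ gives an element of $L$ by definition. Using the iterative description $\Cid(U_d) = \bigcup_n X_n$ from Lemma \ref{Lemclogen-2}, I would induct on the first stage at which $f_{\mathbfsl{m}}$ appears: a clonoid-level combination $c_1 r_1 + c_2 r_2$ lifts to $c_1 \overline{r_1 x_1 \cdots x_d} + c_2 \overline{r_2 x_1 \cdots x_d}$ using the scalar multiplications and the binary $+$ in $\Clo(\langle\ZZ_{pq},+\rangle) \subseteq C$, and a clonoid-level precomposition $r \mapsto r(A \cdot \mathbfsl{y}^t)$ with $A \in \ZZ_q^{k \times n}$ lifts to the clone-level composition of $\overline{r x_1 \cdots x_d}$ with the linear mapping $(\mathbfsl{x}, \mathbfsl{y}) \mapsto (\mathbfsl{x}, A \cdot \mathbfsl{y}^t)$, also in $\Clo(\langle\ZZ_{pq},+\rangle)$.

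The main obstacle is the reliance on \cite[Theorems 1.2 and 1.3]{Fio.CSOF}: the whole argument hinges on every $(\ZZ_p, \ZZ_q)$-linearly closed clonoid being generated by its unary part, invoked here as a black box. Once this is granted, the remainder is a routine lift of clonoid operations to clone operations, succeeding precisely because both ingredients---$\ZZ_p$-scalar sums and $\ZZ_q$-linear substitutions---already live in $\Clo(\langle\ZZ_{pq},+\rangle) \subseteq C$. The edge cases $s \in \{0, 1\}$ sidestep Lemma \ref{LemMonVeri} entirely and are handled directly by the $i = 0$ and $i = 1$ components of $L$.
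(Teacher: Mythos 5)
Your proposal is correct and takes essentially the same route as the paper: decompose $f = q^{p-1}f + p^{q-1}f$, represent $q^{p-1}f$ as a sum of induced monomials $\overline{f_{\mathbfsl{m}}\mathbfsl{x}^{\mathbfsl{m}}}$ via Lemma \ref{Lem2Genexprof}, use Lemmas \ref{Lemfcontmon} and \ref{LemMonVeri} to normalize each monomial to total degree $d \in \{0,1,\ldots,p\}$, and invoke the unary-generatedness of $(\ZZ_p,\ZZ_q)$-linearly closed clonoids from \cite[Theorems 1.2, 1.3]{Fio.CSOF} to reduce the $n$-ary coefficient $f_{\mathbfsl{m}}$ to unary coefficients whose induced monomials lie in $L$.

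The one place where you diverge is the final step. The paper picks a single unary $f$ with $\Cid(\{f\}) = \Cid(\{f_{\mathbfsl{m}}\})$ and then asserts, citing ``the embedding of Theorem~\ref{Thmgeneralembedding},'' that $\Clg(\{\overline{f_{\mathbfsl{m}}x_1\cdots x_i}\}) = \Clg(\{\overline{fx_1\cdots x_i}\})$; that citation is doing nontrivial work which the paper does not unpack. You instead lift the iterative construction of $\Cid(U_d)$ from Lemma~\ref{Lemclogen-2} to the clone level by an explicit induction on the stage at which $f_{\mathbfsl{m}}$ appears, checking that a $\ZZ_p$-linear combination of coefficients lifts to a $\ZZ_p$-linear combination of induced monomials and that a $\ZZ_q$-linear substitution in the coefficient lifts to precomposition with a linear map in $\Clo(\langle\ZZ_{pq},+\rangle)$. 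This is arguably cleaner and makes fully explicit the correspondence between clonoid operations on coefficients and clone operations on induced monomials that the paper's phrase ``by the embedding'' is gesturing at. The only cosmetic slip: for $s \in \{0,1\}$ you say the monomials are ``handled directly by the $i=0,1$ components of $L$,'' but you still need the clonoid-to-unary reduction there too, since $f_{\mathbfsl{m}}$ is $n$-ary; your earlier reduction to ``$\overline{f_{\mathbfsl{m}}x_1\cdots x_d}\in\Clg(L\cup R)$ for all $d\in\{0,\ldots,p\}$'' already covers this, so the remark is harmless but slightly misleading.
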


\begin{proof}
	
	Let $C$ be a clone containing $\Clo(\langle\ZZ_{pq},+\rangle)$ and let $f \in C^{[n]}$ be such that $f$ satisfies for all $(\mathbfsl{x},\mathbfsl{y}) \in \ZZ_p^n \times \ZZ_q^n$:
	
	\begin{center}
		$f((\mathbfsl{x},\mathbfsl{y})) = (\sum_{\mathbfsl{m} \in [p-1]_0^n}f_{\mathbfsl{m}}\mathbfsl{x}^{\mathbfsl{m}}, \sum_{\mathbfsl{h} \in [q-1]_0^n}f_{\mathbfsl{h}}\mathbfsl{y}^{\mathbfsl{h}})$,
	\end{center}
	where $\{f_{\mathbfsl{m}}\}_{[p-1]_0^n}$ and $\{f_{\mathbfsl{h}}\}_{[q-1]_0^n}$ are sequences of functions respectively from $\ZZ_q^n$ to $\ZZ_p$ and from $\ZZ_p^n$ to $\ZZ_q$. Let $p_1 \in \mathbf{R}^*[X]$ be such that $\overline{p_1}^{[n]}  = q^{p-1}f$, where $\mathbf{R} = \ZZ_p^{\ZZ_q^n}$. 
	
	Let $h = f_{\mathbfsl{m}}x^{\mathbfsl{m}}$ be a monomial of $p_1$ and let $s = tD(h)$. Then, by Lemmata \ref{LemMoninZpq} and \ref{LemConnInduced}, we have that $\overline{h} \in C$. Furthermore, let $d \in \NN_0$ be such that if $s \not= 0,1$, then $2 \leq d \leq p$ and $d=s$ modulo $p-1$. If $s=0,1$ then $s = d$. Then, by Lemmata \ref{LemMonVeri} and \ref{Lemfcontmon} it follows that if $s= 0$ $\Clg(\overline{h}) = \Clg(\{\overline{f_{\mathbfsl{m}}}\})$ and $\Clg(\overline{h}) = \Clg(\{\overline{f_{\mathbfsl{m}}x_1\cdots x_s}\}) = \Clg(\{\overline{f_{\mathbfsl{m}}x_1\cdots x_d}\}) $ otherwise. Then let us consider the $(\ZZ_p,\ZZ_q)$-linearly closed clonoid generated by $f_{\mathbfsl{m}}$. By Theorem \cite[Theorem $1.2$]{Fio.CSOF}, there exists a unary function $f:\ZZ_q \rightarrow \ZZ_p$ such that $\Cid(\{f\}) = \Cid(\{f_{\mathbfsl{m}}\})$. Hence, by the embedding of Theorem \ref{Thmgeneralembedding}, we have that $\Clg(\{\overline{f_{\mathbfsl{m}}}\}) = \Clg(\{\overline{f}\})$ and $\Clg(\{\overline{f_{\mathbfsl{m}}x_1\cdots x_i}\}) =\Clg(\{\overline{fx_1\cdots x_i}\})$ for all $i \in [p]$. Hence $\overline{h} \in \Clg(L)$ and thus $q^{p-1}f \in \Clg(L)$ since $\Clg(L)$ contains every induced monomial in $q^{p-1}f$.
	
	In the same way we can observe that $p^{q-1}f \in \Clg(R)$ and thus $f = q^{p-1}f + p^{q-1}f \in \Clg(L) \cup \Clg(R)$ and the claim holds.
	
\end{proof}

The proof of Corollary \ref{CorArFun} follows directly from Theorem \ref{Cor3} and gives an important connection between a clone $C$ containing $\Clo(\langle\ZZ_{pq},+\rangle)$ and its subsets of generators $L$ and $R$, when $p$ and $q$ are distinct primes. Indeed, Theorem \ref{Cor3} provides a complete list of generators for a clone containing $\Clo(\langle\ZZ_{pq},+\rangle)$ that is often redundant but explains how deep is the link between clonoids and clones in this case. We can observe that the generators in Theorem \ref{Cor3} are formed by a product of a unary function generating a $(\ZZ_p,\ZZ_q)$-linearly closed clonoid and a monomial generating a clone on $\ZZ_p$. This justifies the use of polynomials to represent functions of a clone containing $\Clo(\langle\ZZ_{pq},+\rangle)$ which gives a different prospective to these functions.

\section{Clones containing $\Clo(\langle\ZZ_{pq},+\rangle)$ which preserve $\pi_1$ and $[\pi_1,\pi_1] =0$}

In this section our aim is to characterize clones containing $\Clo(\langle\ZZ_{pq},+\rangle)$ which preserve $\pi_1$ and $[\pi_1,\pi_1] =0$. We will show there is an injective function from the lattice of all clones containing $\Clo(\langle \ZZ_{pq},+\rangle)$ which preserve $\pi_1$ and $[\pi_1,\pi_1] \leq 0$ to direct product of the lattice of all clones containing $\Clo(\langle \ZZ_{p},+\rangle)$ and the square of the lattice of all $(\ZZ_q,\ZZ_p)$-linearly closed clonoids. 

Let us start showing a general expression of a function in a clone containing $\Clo(\langle\ZZ_{pq},+\rangle)$ which preserves $\pi_1$ and $[\pi_1,\pi_1] =0$.

\begin{Lem}\label{Lem3}
	Let $p$ and $q$ be distinct prime numbers and let $f: \ZZ_p^n \times \ZZ_q^n \rightarrow \ZZ_p \times \ZZ_q$ be an $n$-ary function which preserves $\pi_1$. Then $f$ preserves $[\pi_1, \pi_1] = 0$ if and only if there exist $f_c: \ZZ_p^n \rightarrow \ZZ_q$, $\{a_{\mathbfsl{m}}\}_{\mathbfsl{m} \in [p-1]_0^n} \subseteq  \ZZ_p$ and $\mathbfsl{f}_1: \ZZ_p^n\rightarrow \ZZ_q^n$ such that for all $(\mathbfsl{x} ,\mathbfsl{y}) \in \ZZ_p^n \times \ZZ_q^n$, $f$ satisfies:
	
	\begin{equation}\label{prop1}
	f(\mathbfsl{x}, \mathbfsl{y}) = (\sum_{\mathbfsl{m} \in [p-1]_0^n}a_{\mathbfsl{m}}\mathbfsl{x}^\mathbfsl{m}, \langle \mathbfsl{f}_1(\mathbfsl{x}),\mathbfsl{y}\rangle + f_c(\mathbfsl{x})).
	\end{equation}
	
\end{Lem}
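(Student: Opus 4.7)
The plan is to derive the stated normal form as a direct unpacking of the hypotheses using Lemma \ref{Lem1-3} together with the polynomial / affine representations from Remarks \ref{RemPolCom} and \ref{RemAffFun}.

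First I would handle the forward direction. Suppose $f$ preserves $\pi_1$ and $[\pi_1,\pi_1] = 0$. By Lemma \ref{Lem1-3}(1), preservation of $\pi_1$ gives $f(\mathbfsl{x},\mathbfsl{y}) = (f_p(\mathbfsl{x}), f_q(\mathbfsl{x},\mathbfsl{y}))$ for some $f_p : \ZZ_p^n \to \ZZ_p$ and $f_q : \ZZ_p^n \times \ZZ_q^n \to \ZZ_q$. Then Lemma \ref{Lem1-3}(3) applies and tells us that $f$ is affine in the second component, i.e.\ $f_q(\mathbfsl{x},\cdot)$ is an affine map from $\ZZ_q^n$ to $\ZZ_q$ for every fixed $\mathbfsl{x} \in \ZZ_p^n$.

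Next I would put each component into the desired explicit shape. Since $\ZZ_p$ is polynomially complete, Remark \ref{RemPolCom} produces a unique family $\{a_{\mathbfsl{m}}\}_{\mathbfsl{m} \in [p-1]_0^n} \subseteq \ZZ_p$ with $f_p(\mathbfsl{x}) = \sum_{\mathbfsl{m} \in [p-1]_0^n} a_{\mathbfsl{m}} \mathbfsl{x}^{\mathbfsl{m}}$. For the second component, I fix $\mathbfsl{x} \in \ZZ_p^n$ and apply Remark \ref{RemAffFun} to the affine function $\mathbfsl{y} \mapsto f_q(\mathbfsl{x},\mathbfsl{y})$; this yields some $\mathbfsl{b}_{\mathbfsl{x}} \in \ZZ_q^n$ and $c_{\mathbfsl{x}} \in \ZZ_q$ with $f_q(\mathbfsl{x},\mathbfsl{y}) = \langle \mathbfsl{b}_{\mathbfsl{x}}, \mathbfsl{y}\rangle + c_{\mathbfsl{x}}$. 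Defining $\mathbfsl{f}_1 : \ZZ_p^n \to \ZZ_q^n$ by $\mathbfsl{f}_1(\mathbfsl{x}) := \mathbfsl{b}_{\mathbfsl{x}}$ and $f_c : \ZZ_p^n \to \ZZ_q$ by $f_c(\mathbfsl{x}) := c_{\mathbfsl{x}}$ immediately assembles (\ref{prop1}).

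For the converse I would just read the above backwards: if $f$ is given by (\ref{prop1}), then its first coordinate is a function of $\mathbfsl{x}$ alone, so Lemma \ref{Lem1-3}(1) gives preservation of $\pi_1$, while the second coordinate $\langle \mathbfsl{f}_1(\mathbfsl{x}),\mathbfsl{y}\rangle + f_c(\mathbfsl{x})$ is manifestly affine in $\mathbfsl{y}$, so Lemma \ref{Lem1-3}(3) delivers preservation of $[\pi_1,\pi_1] = 0$.

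There is no real obstacle here; the only mild subtlety is simply the observation that Remark \ref{RemAffFun} must be applied pointwise in $\mathbfsl{x}$, with the resulting parameters $\mathbfsl{b}_{\mathbfsl{x}}, c_{\mathbfsl{x}}$ packaged into the functions $\mathbfsl{f}_1$ and $f_c$. No continuity or coherence issue arises because $\ZZ_p^n$ is finite and the parametrization is unique, so $\mathbfsl{f}_1$ and $f_c$ are well-defined functions on $\ZZ_p^n$.
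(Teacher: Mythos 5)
Your proof is correct and follows essentially the same route as the paper: Lemma~\ref{Lem1-3}(1) and (3) to reduce to the component form with $f_q(\mathbfsl{x},\cdot)$ affine, Remark~\ref{RemPolCom} for the $\ZZ_p$-component, Remark~\ref{RemAffFun} pointwise in $\mathbfsl{x}$ for the $\ZZ_q$-component, and the converse via Lemma~\ref{Lem1-3}(3). The only cosmetic difference is that the paper packages the pointwise data $\mathbfsl{b}_{\mathbfsl{x}}, c_{\mathbfsl{x}}$ through an explicit Lagrange interpolation sum $\mathbfsl{f}_1 = \sum_{\mathbfsl{a}} f_{\mathbfsl{a}}\mathbfsl{b}_{\mathbfsl{a}}$ and verifies the defining identity of $\rho(\pi_1,\pi_1,0)$ directly in the converse, whereas you simply define $\mathbfsl{f}_1, f_c$ by cases on $\mathbfsl{x}$ and cite Lemma~\ref{Lem1-3}(3) again; both choices are equivalent.
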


\begin{proof}
	
	Let us prove $\Rightarrow$. Let $f: \ZZ_p^n \times \ZZ_q^n \rightarrow \ZZ_p \times \ZZ_q$ be an $n$-ary function which preserves $\pi_1$ and $[\pi_1,\pi_1] = 0$. By Lemma \ref{Lem1-3} item $(1)$, we have that there exist $f_p: \ZZ^n_p \rightarrow \ZZ_p$ and  $f_q: \ZZ^n_p \times \ZZ_q^n \rightarrow \ZZ_q$ such that $f$ satisfies $f(\mathbfsl{x},\mathbfsl{y}) = (f_p(\mathbfsl{x}),f_q(\mathbfsl{x},\mathbfsl{y}))$ for all $(\mathbfsl{x},\mathbfsl{y}) \in \ZZ_p^n \times \ZZ_q^n$. Moreover, by item $(3)$ of Lemma \ref{Lem1-3} we have that $f$ is affine in the second component. Thus let us fix $\mathbfsl{a} \in \ZZ_p^n$. By Remark \ref{RemAffFun} for all $\mathbfsl{y} \in \ZZ_q^n$:
	
	\begin{equation}
	\label{eq:}
	f_q(\mathbfsl{a},\mathbfsl{y}) = \langle\mathbfsl{b}_{\mathbfsl{a}},\mathbfsl{y}\rangle + c_{\mathbfsl{a}},
	\end{equation}
	for some $\mathbfsl{b}_{\mathbfsl{a}} \in \ZZ_q^n$. Hence, using the Lagrange interpolation functions (\cite[Definition $4.2$]{Fio.CSOF}), we have that:
	
	\begin{equation}
	\label{eq2:}
	f_q(\mathbfsl{x},\mathbfsl{y}) = \sum_{\mathbfsl{a} \in \ZZ_p^n}f_{\mathbfsl{a}}(\mathbfsl{x})(\langle\mathbfsl{b}_{\mathbfsl{a}},\mathbfsl{y}\rangle + c_{\mathbfsl{a}}),
	\end{equation}
	for all $\mathbfsl{y} \in \ZZ_q^n$. Thus $f$ satisfies (\ref{prop1}) with $\mathbfsl{f}_1 = \sum_{\mathbf{a} \in \ZZ_p^n}f_{\mathbfsl{a}}\mathbfsl{b}_{\mathbfsl{a}}$ and $f_c = \sum_{\mathbf{a} \in \ZZ_p^n}f_{\mathbfsl{a}}c_{\mathbfsl{a}}$. Moreover, by Remark \ref{RemPolCom}, there exists a sequence $\{a_{\mathbfsl{m}}\}_{\mathbfsl{m} \in \ZZ_p^n}$ such that $f_1(\mathbfsl{x}) = \sum_{\mathbfsl{m} \in [p-1]_0^n}a_{\mathbfsl{m}}\mathbfsl{x}^\mathbfsl{m}$ for all $\mathbfsl{x} \in \ZZ_p^n$. Hence the $\Rightarrow$ implication holds. For the $\Leftarrow$ implication suppose that there exist $f_c: \ZZ_p^n \rightarrow \ZZ_q$, $\{a_{\mathbfsl{m}}\}_{\mathbfsl{m} \subseteq [p-1]_0^n} \subseteq  \ZZ_p$ and $\mathbfsl{f}_1: \ZZ_p^n\rightarrow \ZZ_q^n$ such that for all $(\mathbfsl{x} ,\mathbfsl{y}) \in \ZZ_p^n \times \ZZ_q^n$, $f$ satisfies (\ref{prop1}). Let $\mathbfsl{x}_1, \mathbfsl{x}_2, \mathbfsl{x}_3, \mathbfsl{x}_4 \in \ZZ_p^n$ and $\mathbfsl{y}_1, \mathbfsl{y}_2, \mathbfsl{y}_3, \mathbfsl{y}_4 \in \ZZ_q^n$ such that $\mathbfsl{x}_1 = \mathbfsl{x}_2 = \mathbfsl{x}_3 = \mathbfsl{x}_4$ and $\mathbfsl{y}_1 - \mathbfsl{y}_2 + \mathbfsl{y}_3 = \mathbfsl{y}_4$. Then:
	
	\begin{equation}
	\label{eqk:}
	\begin{split}
	f(\mathbfsl{x}_1,\mathbfsl{y}_1) - f(\mathbfsl{x}_2,\mathbfsl{y}_2) + f(\mathbfsl{x}_3,\mathbfsl{y}_3)  &= (\sum_{\mathbfsl{m} \in [p-1]_0^n}a_{\mathbfsl{m}}\mathbfsl{x}_1^\mathbfsl{m},\langle \mathbfsl{f}_1(\mathbfsl{x}_1),\mathbfsl{y}_1 - \mathbfsl{y}_2 + \mathbfsl{y}_3\rangle + f_c(\mathbfsl{x}_1)) \\&= f(\mathbfsl{x}_4,\mathbfsl{y}_4)
	\end{split}
	\end{equation}	
	and thus $f$ preserves $[\pi_1,\pi_1] = 0$.
	
\end{proof}

Let us now provide a proof of Theorem \ref{Thmembaddingg}. Let us consider the functions $\rho_1: \mathbf{\Clo}^{\diamond}(\langle\ZZ_{pq},+\rangle) $ $\rightarrow  \mathbf{\Clo}^{\mathcal{L}}(\langle\ZZ_{p},+\rangle)$ as defined in the proof of  Theorem \ref{ThmembeddingClones}, $\psi_0,\psi_1: \Clo^{\mathcal{L}}(\langle \ZZ_{pq},+\rangle) \rightarrow \mathcal{L}(\ZZ_p,\ZZ_q)$ as defined in the proof of Theorem \ref{Thmgeneralembedding}.

\begin{proof}[Proof of Theorem \ref{Thmembaddingg}]

	Let $p$ and $q$ be distinct prime numbers and let $\mathbf{\Clo}'(\langle\ZZ_{pq},+\rangle)$ be the lattice of all clones containing $\Clo(\langle\ZZ_{pq},+\rangle)$ which preserve $\pi_1$ and $[\pi_1,\pi_1] \leq 0$. Let $\psi: \Clo'(\langle\ZZ_{pq},+\rangle) \rightarrow \Clo^{\mathcal{L}}(\langle\ZZ_{p},+\rangle) \times \mathcal{L}(\ZZ_p,\ZZ_q)^2$ be defined by $\psi(C) := (\rho_1(C),\psi_1(C),\psi_0(C))$ for all $C \in \Clo'(\langle\ZZ_{pq},+\rangle)$. With the same proof of Theorems \ref{ThmembeddingClones} and \ref{Thmembaddingg} we see that $\rho_1,\psi_0,\psi_1$ and thus $\psi$ are well-defined.
	
	Next we prove that $\psi$ is injective. To this end let $C,D \in \Clo'(\langle\ZZ_{pq},+ \rangle)$ be such that $\psi(C) = \psi(D)$. Let us suppose that $f \in C^{[n]}$. By Lemma \ref{Lem3}, there exist $f_c: \ZZ_p^n \rightarrow \ZZ_q$, $\{a_{\mathbfsl{m}}\}_{\mathbfsl{m} \in  [p-1]_0^n} \subseteq  \ZZ_p$ and $\mathbfsl{f}_1: \ZZ_p^n\rightarrow \ZZ_q^n$ such that for all $(\mathbfsl{x} ,\mathbfsl{y}) \in \ZZ_p^n \times \ZZ_q^n$, $f$ satisfies:
	
	\begin{equation}\label{expr}
	f(\mathbfsl{x}, \mathbfsl{y}) = (\sum_{\mathbfsl{m} \in [p-1]_0^n}a_{\mathbfsl{m}}\mathbfsl{x}^\mathbfsl{m}, \langle \mathbfsl{f}_1(\mathbfsl{x}),\mathbfsl{y}\rangle + f_c(\mathbfsl{x})).
	\end{equation}
	It is clear that the functions $\{f_i: \ZZ_p^n \times \ZZ_q^n \rightarrow \ZZ_p \times \ZZ_q\}_{1 \leq i \leq n}$, $f'_c: \ZZ_p^n \times \ZZ_q^n \rightarrow \ZZ_p \times \ZZ_q$, and $t: \ZZ_p^n \times \ZZ_q^n \rightarrow \ZZ_p \times \ZZ_q$ such that for all $(\mathbfsl{x},\mathbfsl{y}) \in \ZZ_p^n \times \ZZ_q^n$, $f_i(\mathbfsl{x},\mathbfsl{y}) = (0, (\mathbfsl{f}_1(\mathbfsl{x}))_i(\mathbfsl{y})_i)$, $f'_c(\mathbfsl{x},\mathbfsl{y}) = (0, f_c(\mathbfsl{x}))$, and $t(\mathbfsl{x},\mathbfsl{y}) = (\sum_{\mathbfsl{m} \in [p-1]_0^n}a_{\mathbfsl{m}}\mathbfsl{x}^\mathbfsl{m}, 0)$ are in $C$. Thus $\pi_p^{\ZZ_p \times \ZZ_q} \circ t|{\ZZ_p^n \times\{\mathbf{0}\}} \in \rho_1(C) = \rho_1(D)$, $(\mathbfsl{f}_1)_i \in \psi_1(C) = \psi_1(D)$ for all $i \in [n]$, and $f_c \in \psi_0(C) = \psi_0(D)$. Hence, by definition of $\rho,\psi_0,\psi_1$, this implies that $p,f_1,\dots,f_n,f'_c \in D$. Hence $f = t + \sum_{i=1}^n f_i +f'_c \in D$. Thus $C \subseteq D$ and in the same way we can prove the other inequality. Hence $C = D$ and the claim holds.

\end{proof}

\begin{Cor}
	\label{Corpi1case}
	Let $p$ and $q$ be distinct prime numbers. Let $\prod_{i =1}^n p_i^{k_i}$ be the factorization of the polynomial $g = x^{p-1} -1$ in $\mathbb{Z}_q[x]$ into its irreducible divisors. Then the number of clones containing $\Clo(\langle\ZZ_{pq},+\rangle)$ which preserve $\pi_1$ and $[\pi_1,\pi_1] \leq 0$ is bounded from above by $(n(p)+3)(2\prod_{i =1}^n(k_i +1))^2$, where $n(k)$ is the number of divisors of $k-1$.
\end{Cor}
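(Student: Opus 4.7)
The proof is a straightforward counting argument that combines Theorem~\ref{Thmembaddingg} with known cardinalities of the two factors appearing in its target. Let $\mathbf{Clo}'(\langle\ZZ_{pq},+\rangle)$ denote the lattice of clones containing $\Clo(\langle\ZZ_{pq},+\rangle)$ which preserve $\pi_1$ and $[\pi_1,\pi_1]=0$. Theorem~\ref{Thmembaddingg} gives an injection
\begin{equation*}
\mathbf{Clo}'(\langle\ZZ_{pq},+\rangle)\ \hookrightarrow\ \mathbf{Clo}^{\mathcal{L}}(\langle\ZZ_p,+\rangle)\ \times\ \mathcal{L}(\ZZ_q,\ZZ_p)^2,
\end{equation*}
so it suffices to bound the size of each factor on the right.

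For the first factor, the corollary following Theorem~\ref{ThmembeddingClones} records that $|\mathbf{Clo}^{\mathcal{L}}(\langle\ZZ_p,+\rangle)|=n(p)+3$, where $n(p)$ is the number of divisors of $p-1$; this follows from Kreinecker's \cite[Corollary $1.3$]{Kre.CFSO} for odd primes and from Post's characterization \cite{Pos.TTVI} when $p=2$.

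For the second factor, note that by Definition~\ref{DefClo} a $(\ZZ_q,\ZZ_p)$-linearly closed clonoid is a set of functions $\ZZ_p^n\to\ZZ_q$, so the polynomial relevant to the enumeration in \cite[Theorem~$1.3$]{Fio.CSOF} is $g=x^{p-1}-1\in\ZZ_q[x]$. If $\prod_{i=1}^n p_i^{k_i}$ is its factorization into irreducible divisors, then \cite[Theorem~$1.3$]{Fio.CSOF} yields
\begin{equation*}
|\mathcal{L}(\ZZ_q,\ZZ_p)|\ \leq\ 2\prod_{i=1}^n(k_i+1),
\end{equation*}
as already used in the proof of Corollary~\ref{Cor1}.

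Multiplying the two bounds and accounting for the square on $\mathcal{L}(\ZZ_q,\ZZ_p)$ in the codomain of the embedding yields
\begin{equation*}
|\mathbf{Clo}'(\langle\ZZ_{pq},+\rangle)|\ \leq\ (n(p)+3)\Bigl(2\prod_{i=1}^n(k_i+1)\Bigr)^{\!2},
\end{equation*}
which is the claim. There is no real obstacle here: the only point requiring attention is to track which of the two linearly closed clonoid lattices appears in Theorem~\ref{Thmembaddingg} and thus which of the two polynomials $x^{p-1}-1$ or $x^{q-1}-1$ must be factored; since $\pi_1$ is the kernel of the projection onto $\ZZ_p$, Lemma~\ref{Lem3} forces the ``non-trivial'' coordinate of any $f\in\mathbf{Clo}'$ to live on the $\ZZ_q$-side, making $\mathcal{L}(\ZZ_q,\ZZ_p)$ the correct factor and $x^{p-1}-1\in\ZZ_q[x]$ the correct polynomial to factor.
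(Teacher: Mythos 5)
Your proof is correct and follows the paper's argument exactly: apply the injection of Theorem~\ref{Thmembaddingg}, then bound the two factors via Kreinecker/Post for $\mathbf{Clo}^{\mathcal{L}}(\langle\ZZ_p,+\rangle)$ and via \cite[Theorem~1.3]{Fio.CSOF} for $\mathcal{L}(\ZZ_q,\ZZ_p)$. Your additional remark explaining why $x^{p-1}-1\in\ZZ_q[x]$ (rather than $x^{q-1}-1\in\ZZ_p[x]$) is the relevant polynomial is a helpful clarification but not a different method.
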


\begin{proof}
	We know from \cite[Corollary $1.3$]{Kre.CFSO} and \cite{Pos.TTVI} that number of clones containing $\Clo(\langle\ZZ_{p},+\rangle)$ is $n(p)+3$ where $n(p)$ is the number of divisors of $p-1$. Furthermore, we know from \cite[Theorem $1.3$]{Fio.CSOF} that the cardinality of the lattice of all $(\ZZ_q,\ZZ_p)$-linearly closed clonoids is $2\prod_{i =1}^n(k_i +1)$, where $\{k_i\}_{i \in [n]}$ are the exponents of the factorization of the polynomial $g = x^{p-1} -1$ in $\mathbb{Z}_q[x]$. Then the bound directly follows from the injective function of Theorem \ref{Thmembaddingg}. 
\end{proof}

We can observe that the bound of Corollary \ref{Corpi1case} is not reached. In particular the images of $\psi_0$ and $\psi_1$, are $(\ZZ_q,\ZZ_p)$-linearly closed clonoids that satisfy some further closure properties. Indeed, both are closed from the right not only under composition with linear mappings but also with functions of the clone image of $\rho_1$. Furthermore, the image of $\psi_1$ contains constants and is closed under point-wise product. The image of $\psi_0$ is closed under point-wise product with functions in $C_2$. These structures are related to vector subspaces of $\ZZ_p^{q}$ that are closed under Hadamard product. We will not go into details in the case, nevertheless we think that they are interesting structures.

In general we cannot conclude that $\rho$ is an embedding since the join of two $(\ZZ_q,\ZZ_p)$-linearly closed clonoids that satisfy the previous properties could also not satisfy them.

\section*{Acknowledgements}

The author thanks Erhard Aichinger, who inspired this paper, and Sebastian Kreinecker for many hours of fruitful discussions.

\bibliographystyle{alpha}

\end{document}